\newcommand\Dist[1]{\phantom{\rule{#1}{4pt}}}
\newtheorem{theorem}{Theorem}[section]
\newtheorem{lemma}[theorem]{Lemma}
\title{From Complexity to Simplicity: Adaptive ES-Active Subspaces for Blackbox Optimization}
\author{
   Krzysztof Choromanski\textsuperscript{*} \\
  Google Brain Robotics \\
  kchoro@google.com \\
  \And
   Aldo Pacchiano\textsuperscript{*} \\
  UC Berkeley \\
 pacchiano@berkeley.edu \\
  \And
  Jack Parker-Holder\textsuperscript{*} \\
  Columbia University \\
 jh3764@columbia.edu \\
  \And
   Yunhao Tang\textsuperscript{*} \\
  Columbia University \\
  yt2541@columbia.edu \\
}
\begin{document}

\maketitle

\begin{abstract}
We present a new algorithm ($\mathrm{ASEBO}$) for optimizing high-dimensional blackbox functions. $\mathrm{ASEBO}$ 
adapts to the geometry of the function and learns optimal sets of sensing directions, which are used to probe it, on-the-fly. 
It addresses the exploration-exploitation trade-off of blackbox optimization with expensive blackbox queries by continuously learning the bias of the lower-dimensional model 
used to approximate gradients of smoothings of the function via compressed sensing and contextual bandits methods. To obtain this model, it leverages techniques from the emerging theory of active subspaces \cite{constantine} in the novel ES blackbox optimization context.
As a result, $\mathrm{ASEBO}$ learns the dynamically changing intrinsic dimensionality
of the gradient space and adapts to the hardness of different stages of the optimization without external supervision. Consequently, it leads to more sample-efficient blackbox optimization than state-of-the-art algorithms.
We provide theoretical results and test $\mathrm{ASEBO}$ advantages over other methods empirically by evaluating it on the set
of reinforcement learning policy optimization tasks as well as functions from the recently open-sourced $\mathrm{Nevergrad}$ library. 
\end{abstract}

\section{Introduction}\label{sec:intro}

Consider a high-dimensional function $F:\mathbb{R}^{d} \rightarrow \mathbb{R}$. We assume that querying it is expensive.
Examples include reinforcement learning (RL) blackbox functions taking as inputs vectors $\theta$ encoding policies 
$\pi: \mathcal{S} \rightarrow \mathcal{A}$ mapping states to actions and outputting total (expected/discounted) rewards
obtained by agents applying $\pi$ in given environments \cite{Gym}. For this class of functions evaluations usually require running a simulator. Other examples include wind configuration design optimization problems for high speed civil transport aircrafts,
optimizing computer codes (e.g. NASA synthetic tool $\mathrm{FLOPS/ENGENN}$ used to size the aircraft and propulsion system \cite{nasa}), 
crash tests, medical and chemical reaction experiments \cite{zhou}.

Evolution strategy (ES) methods have traditionally been used in low-dimensional regimes (e.g. hyperparameter tuning), and considered ill-equipped for higher dimensional problems due to poor sampling complexity \cite{nesterov}. However, a flurry of recent work has shown they can scale better than previously believed \cite{petroski, conti, montreal, horia, stockholm, ES, TRES}. This is thanks to a couple of reasons.


First of all, new ES methods apply several efficient heuristics (filtering, various normalization techniques as in \cite{horia} and new exploration strategies as in \cite{conti}) in order to substantially 
improve sampling complexity. Other recent methods \cite{montreal, stockholm} are based on 
more accurate Quasi Monte Carlo (MC) estimators of the gradients of Gaussian smoothings of blackbox functions with theoretical guarantees. These approaches
provide better quality gradient sensing mechanisms.
Additionally, in applications such as RL, new compact structured policy architectures 
(such as low-displacement rank neural networks from \cite{stockholm} or even linear policies \cite{worldmodels}) are used to
reduce the number of policies' parameters and dimensionality of the optimization problem. 

Recent research also shows that ES-type blackbox optimization in RL leads to more stable policies than policy gradient methods since ES methods search for parameters that are robust to perturbations \cite{lehman}. Unlike policy gradient methods, ES aims to find parameters maximizing expected reward (rather than just a reward) in respect to Gaussian perturbations.

Finally, pure ES methods as opposed to state-of-the-art policy optimization techniques ($\mathrm{TRPO}$, $\mathrm{PPO}$ or $\mathrm{ARS}$  \cite{schulman2017proximal, babadi, trpo, horia}), can be applied also for blackbox optimization problems 
that do not exhibit $\mathrm{MDP}$ structure required for policy gradient methods 
and cannot benefit from state normalization algorithm central to $\mathrm{ARS}$. This has led to their recent popularity for non-differentiable tasks  \cite{EPG, worldmodels}.



In this paper we introduce a new adaptive sample-efficient blackbox optimization algorithm. $\mathrm{ASEBO}$ adapts to the geometry of blackbox functions and learns optimal sets of sensing directions, which are used to probe them, on-the-fly. To do this, it leverages techniques from the emerging theory of active subspaces \cite{constantine, constantine_2, constantine_3, liliu} in a novel ES blackbox optimization context.
Active subspaces and their extensions are becoming popular as effective techniques for dimensionality reduction (see for instance: active manifolds \cite{bridges} or ResNets for learning isosurfaces \cite{resnetzhang}). However, to the best of our knowledge we are the first to apply active subspace ideas for ES optimization.

$\mathrm{ASEBO}$ addresses the exploration-exploitation trade-off of blackbox optimization with expensive function queries by continuously learning the bias of the lower-dimensional model used to approximate gradients of smoothings of the function with compressed sensing and contextual bandits methods. 
The adaptiveness is what distinguishes it from some recently introduced guided ES methods such as \cite{metz} that rely on 
fixed hyperparameters that are hard to tune in advance (e.g. the length of the buffer defining lower dimensional space for gradient search). 
We provide theoretical results and empirically evaluate $\mathrm{ASEBO}$ on a set of RL blackbox optimization tasks as well as non-RL blackbox functions from the recently open-sourced $\mathrm{Nevergrad}$ library
\cite{nevergrad}, showing that it consistently learns optimal inputs with fewer queries to a blackbox function than other methods. 

\textbf{$\mathrm{ASEBO}$ versus CMA-ES:} 
There have been a variety of works seeking to reduce sampling complexity for ES methods through the use of metric learning. 
The prominent class of the covariance matrix adaptation evolution strategy (CMA-ES) methods derives state-of-the-art derivative free blackbox optimization algorithms, which seek to learn 
and maintain a fully parameterized Gaussian distribution. 
CMA-ES suffers from quadratic time complexity for each evaluation which can be limiting for high dimensional problems. As such, a series of attempts have been made to produce scalable 
variants of CMA-ES, by restricting the covariance matrix to the diagonal (sep-CMA-ES \cite{cmaes_mod}) or a low rank approximation as in VD-CMA-ES \cite{vdcmaes} and LM-CMA-ES \cite{lmcmaes}. 
Two recent algorithms, VkD-CMA-ES \cite{vkdcmaes} and LM-MA-ES \cite{lmmaes}, seek to combine the above ideas and have been shown to be successful in large-scale settings, 
including RL policy learning \cite{challenges_esrl}. Although these methods are able to quickly learn and adapt the covariance matrix, they are heavily dependent on hyperparameter 
selection \cite{vkdcmaes, cmaes_compare} and lack the means to avoid learning a bias. As our experiments show, this can severely hurt their performance. The best CMA-ES variants often struggle with RL tasks of challenging objecive landscapes, displaying inconsistent performance across tasks. Furthermore,
they require careful hyperparameter tuning for good performance (see: analysis in Section \ref{sec:experiments_overleaf}, Fig. \ref{fig:lmmaes}).



\vspace{-3mm} 
\section{Adaptive Sample-Efficient Blackbox Optimization}\label{sec:algorithm}

Before we describe $\mathrm{ASEBO}$, we explain key theoretical ideas behind the algorithm.
$\mathrm{ASEBO}$ uses online $\mathrm{PCA}$ to maintain and update on-the-fly subspaces which we call 
\textit{ES-active subspaces} $\mathcal{L}^{\mathrm{ES}}_{\mathrm{active}}$, accurately approximating the gradient data space at 
a given phase of the algorithm. The bias of the obtained gradient estimators is measured by sensing the length 
of its component from the orthogonal complement $\mathcal{L}_{\mathrm{active}}^{\mathrm{ES}, \perp}$ via compressed sensing 
or computing optimal probabilities for exploration (e.g. sensing from $\mathcal{L}_{\mathrm{active}}^{\mathrm{ES}, \perp}$)
via contextual bandits methods \cite{shipra}. The algorithm corrects its probabilistic distributions used for choosing directions for gradient sensing
based on these measurements.
As we show, we can measure that bias accurately using only a constant number of additional function queries, regardless of the dimensionality. 
This in turn determines an exploration strategy, as we explain later. Estimated gradients are then used to update parameters.

\subsection{Preliminaries}

Consider a blackbox function $F:\mathbb{R}^{d} \rightarrow \mathbb{R}$. We do not assume that $F$ is differentiable.
The \textit{Gaussian smoothing} \cite{nesterov} $F_{\sigma}$ of $F$ parameterized by smoothing parameter $\sigma>0$ is given as:
$
F_{\sigma}(\theta) = \mathbb{E}_{\mathbf{g} \in \mathcal{N}(0,\mathbf{I}_{d})}[F(\theta + \sigma \mathbf{g})] = 
(2\pi)^{-\frac{d}{2}}
\int_{\mathbb{R}^{d}}F(\theta + \sigma \mathbf{g})e^{-\frac{\|\mathbf{g}\|^{2}}{2}}d\mathbf{g}.
$
The gradient of the Gaussian smoothing of $F$ is given by the formula:
\vspace{-2mm}
\begin{equation}
\label{grad}
\nabla F_{\sigma}(\theta)=\frac{1}{\sigma}\mathbb{E}_{\mathbf{g} \sim \mathcal{N}(0,\mathbf{I}_{d})}[F(\theta + \sigma \mathbf{g})\mathbf{g}].    
\end{equation}
\vspace{-3mm}

Formula \ref{grad} on $\nabla F_{\sigma}(\theta)$ leads straightforwardly to several unbiased Monte Carlo (MC) estimators of $\nabla F_{\sigma}(\theta)$, where the most
popular ones are: the \textit{forward finite difference} estimator \cite{stockholm} defined as:
$
\widehat{\nabla}^{\mathrm{FD}}_{\mathrm{MC}}F_{\sigma}(\theta) = 
\frac{1}{k\sigma}\sum_{i=1}^{k}(F(\theta + \sigma \mathbf{g}_{i})-F(\theta))\mathbf{g}_{i},
$
and an \textit{antithetic ES gradient estimator} \cite{ES} given as:
$
\widehat{\nabla}^{\mathrm{AT}}_{\mathrm{MC}}F_{\sigma}(\theta) = 
\frac{1}{2k\sigma}\sum_{i=1}^{k}(F(\theta + \sigma \mathbf{g}_{i})-F(\theta - \sigma \mathbf{g}_{i}))\mathbf{g}_{i},
$
where typically $\mathbf{g}_{1},...,\mathbf{g}_{k}$ are taken independently at random from $\mathcal{N}(0,\mathbf{I}_{d})$ of from more complex joint distributions for variance reduction (see: \cite{stockholm}). We call samples $\mathbf{g}_{1},...,\mathbf{g}_{k}$ the \textit{sensing directions} since they are used to sense gradients $\nabla F_{\sigma}(\theta)$.
The antithetic formula can be alternatively rationalized as giving the renormalized gradient of $F$ (if $F$ is smooth), if not taking into account cubic and higher-order terms
of the Taylor expansion $F(\theta + \mathbf{v}) = F(\theta) + \nabla F^{\top} \mathbf{v} + \frac{1}{2}\mathbf{v}^{\top}H(\theta)\mathbf{v}$
(where $H(\theta)$ stands for the Hessian of $F$ in $\theta$).

Standard ES methods apply different gradient-based techniques such as $\mathrm{SGD}$ or $\mathrm{Adam}$, 
fed with the above MC estimators of $\nabla F_{\sigma}$ to conduct blackbox optimization. The number of samples $k$ per iteration
of the optimization procedure is usually of the order $O(d)$. This becomes a computational bottleneck for high-dimensional blackbox
functions $F$ (for instance, even for relatively small RL tasks with policies encoded by compact neural networks we still have $d>100$ parameters).

\begin{figure}[!t]
\begin{minipage}{0.99\textwidth}
  \subfigure[\textbf{HC:} active subspace]{\includegraphics[keepaspectratio, width=0.24\textwidth]{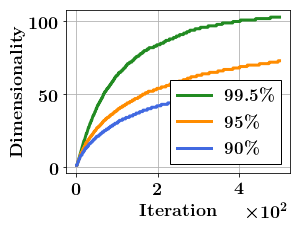}}  
  \subfigure[\textbf{SW:} active subspace]{\includegraphics[keepaspectratio, width=0.22\textwidth]{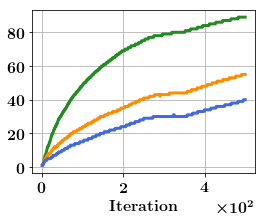}}
  \subfigure[\textbf{HC:} \# of samples]{\includegraphics[keepaspectratio, width=0.24\textwidth]{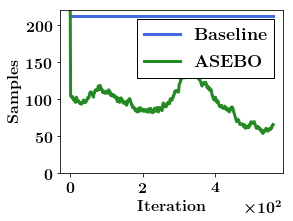}}
  \subfigure[\textbf{SW:} \# of samples]{\includegraphics[keepaspectratio, width=0.22\textwidth]{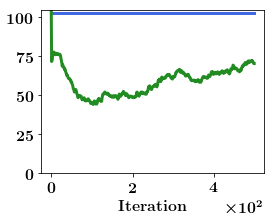}}
  \end{minipage}
  \caption{The motivation behind $\mathrm{ASEBO}$. Two first plots: ES baseline for $\mathrm{HalfCheetah}$ and $\mathrm{Swimmer}$ tasks from the $\mathrm{OpenAI}$ $\mathrm{Gym}$ library for 
  $212$-dimensional policies - the plot shows how the dimensionality of the space capturing a given percentage of variance of approximate gradient data depends on the iteration of the algorithm. This information is never exploited by the algorithm, even though $99.5\%$ of the variance resides in the much lower-dimensional space ($100$ dimensions).  Two last plots: $\mathrm{ASEBO}$ taking advantage of this information (\# of sample/sensing directions reflects the hardness of the optimization at each iteration and is strongly correlated with the $\mathrm{PCA}$ dimensionality. 
  }
  \label{fig:pca_motivation} 
\vspace{-3mm}  
\end{figure}
\vspace{-3mm}  

\subsection{ES-active subspaces via online PCA with decaying weights}

The first idea leading to the $\mathrm{ASEBO}$ algorithm is that in practice one does not need to estimate the gradient of $F$ accurately 
(after all ES-type methods do not even aim to compute the gradient of $F$, but rather focus on $\nabla F_{\sigma}$). 
Poor scalability of 
ES-type blackbox optimization algorithms is caused by high-dimensionality of the gradient vector. However, during the optimization process the 
space spanned by gradients may be locally well approximated by a lower-dimensional subspace $\mathcal{L}$ and sensing the gradient in that subspace might be more effective. 
In some recent papers such as \cite{metz} such a subspace is defined simply as 
$\mathcal{L} = \mathrm{span}\{\widehat\nabla^{\mathrm{AT}}_{\mathrm{MC}}F_{\sigma}(\theta_{i}),\widehat\nabla^{\mathrm{AT}}_{\mathrm{MC}}F_{\sigma}(\theta_{i-1}),...,\widehat\nabla^{\mathrm{AT}}_{\mathrm{MC}}F_{\sigma}(\theta_{i-s+1})\}$, where $\{\widehat\nabla^{\mathrm{AT}}_{\mathrm{MC}}F_{\sigma}(\theta_{i}),\widehat\nabla^{\mathrm{AT}}_{\mathrm{MC}}F_{\sigma}(\theta_{i-1}),...,\widehat\nabla^{\mathrm{AT}}_{\mathrm{MC}}F_{\sigma}(\theta_{i-s+1})\}$ 
stands for the batch of last $s$ approximated gradients during the optimization process and $s$ is a fixed hyperparameter.
Even though $\mathcal{L}$ will dynamically change during the optimization, such an approach has several disadvantages in practice. 
Tuning parameter $s$ is very difficult or almost impossible and the assumption that the dimensionality of $\mathcal{L}$ should be constant during 
optimization is usually false. In our approach, dimensionality of $\mathcal{L}$ varies and depends on the hardness of the optimization in different optimization stages.

We apply Principal Component Analysis ($\mathrm{PCA}$, \cite{Jolliff}) to create a subspace $\mathcal{L}$ capturing particular variance $\sigma > 0$ of 
the approximate gradients data. 
This data is either: the approximate gradients computed in previous iterations from the antithetic formula or: the elements of the sum from that equation that are averaged over to obtain these gradients. 
For clarity of the exposition, from now on we will assume the former, but both variants are valid.  
Choosing $\sigma$ is in practice much easier than $s$ and leads to 
subspaces $\mathcal{L}$ of varying dimensionalities throughout the optimization procedure, called by us from now on \textit{ES-active subspaces} 
$\mathcal{L}_{\mathrm{active}}^{\mathrm{ES}}$. 
\vspace{-3mm}  
\begin{algorithm}[H]
\caption{$\mathrm{ASEBO}$ Algorithm}
\textbf{Hyperparameters:}  number of iterations of full sampling $l$, smoothing parameter $\sigma>0$,
step size $\eta$, PCA threshold $\epsilon$, decay rate $\gamma$, 
total number of iterations $T$.\; \\
\textbf{Input:}  blackbox function $F$, vector $\theta_0 \in \mathbb{R}^{d}$ where optimization starts. 
                 $\mathrm{Cov}_0 \in \{0\}^{d \times d}$, $p^{0}=0$.\; \\
\textbf{Output:} vector $\theta_{T}$. \; \\
\For{$t=0, \ldots, T-1$}{
  \If {$t < l$}{ 
    Take $n_t = d$. Sample $\mathbf{g}_1, \cdots, \mathbf{g}_{n_{t}}$ from $\mathcal{N}(0, \mathbf{I}_{d})$ (independently). \; 
  }
  \Else {
    1. Take top $r$ eigenvalues $\lambda_{i}$ of $\mathrm{Cov}_{t}$, where $r$ is smallest such that: $\sum_{i=1}^{r}\lambda_{i} \geq \epsilon \sum_{i=1}^{d} \lambda_{i}$, 
       using its $\mathrm{SVD}$ as described in text and take $n_{t}=r$.\; \\
    2. Take the corresponding eigenvectors $\mathbf{u}_{1},...,\mathbf{u}_{r} \in \mathbb{R}^{d}$ and let $\mathbf{U} \in \mathbb{R}^{d \times r}$
       be obtained by stacking them together. Let $\mathbf{U}^{\mathrm{act}} \in \mathbb{R}^{d \times r}$ be obtained from stacking together 
       some orthonormal basis of $\mathcal{L}^{\mathrm{ES}}_{\mathrm{active}}   
       \overset{\mathrm{def}}{=} \mathrm{span}\{\mathbf{u}_{1},...,\mathbf{u}_{r}\}$.
       Let $\mathbf{U}^{\perp} \in \mathbb{R}^{d \times (d-r)}$ be obtained from
       stacking together some orthonormal basis of the orthogonal complement $\mathcal{L}^{\mathrm{ES}, \perp}_{\mathrm{active}}$ of $\mathcal{L}^{\mathrm{ES}}_{\mathrm{active}}$.       
       \\
    3. Sample $n_{t}$ vectors $\mathbf{g}_{1},...,\mathbf{g}_{n_{t}}$ as follows: with probability $1-p^{t}$ from $\mathcal{N}(0,\mathbf{U}^{\perp}(\mathbf{U}^{\perp})^{\top})$
       and with probability $p^{t}$ from  $\mathcal{N}(0,\mathbf{U}^{\mathrm{act}}(\mathbf{U}^{\mathrm{act}})^{\top})$. \\
    4. Renormalize $\mathbf{g}_{1},...,\mathbf{g}_{n_{t}}$ such that marginal distributions $\|\mathbf{g}_{i}\|_{2}$ are  $\chi(d)$.
  }
  1. Compute $\widehat{\nabla}_{\mathrm{MC}}^{\mathrm{AT}}F(\theta_{t})$ as:
    $
    \widehat{\nabla}_{\mathrm{MC}}^{\mathrm{AT}}F(\theta_{t}) =  \frac{1}{2n_{t}\sigma} \sum_{j=1}^{n_{t}} (F(\theta_t +  \mathbf{g}_j) - F(\theta_t - \mathbf{g}_j)) \mathbf{g}_j. \\
    $
  2. Set $\mathrm{Cov}_{t+1} = \lambda \mathrm{Cov}_{t} + (1-\lambda) \Gamma$, where 
     $\Gamma = \widehat\nabla^{\mathrm{AT}}_{\mathrm{MC}}F_{\sigma}(\theta_{t}) (\widehat\nabla^{\mathrm{AT}}_{\mathrm{MC}}F_{\sigma}(\theta_{t}))^{\top}$.
     \\
  3. Set $p^{t+1} = p_{\mathrm{opt}}$ for $p_{\mathrm{opt}}$ output by Algorithm 2 and: $\theta_{t+1} \leftarrow \theta_{t} + \eta \widehat{\nabla}_{\mathrm{MC}}^{\mathrm{AT}}F(\theta_{t})$. 
 }
\label{Alg:asebo}
\end{algorithm}
\vspace{-3mm}  

These will be in turn applied to define covariance matrices encoding  probabilistic distributions applied to construct sensing directions used for estimating $\nabla F_{\sigma}(\theta)$. 
The additional advantage of our approach is that $\mathrm{PCA}$ automatically filters out gradient noise.

We use our own online version of $\mathrm{PCA}$ with decaying weights (decay rate is defined by parameter $\lambda>0$). By tuning $\lambda$ we can define the rate at which 
historical approximate gradient data is used to choose the right sensing directions, which will continuously decay. 
We consider a stream of approximate gradients $\widehat\nabla^{\mathrm{AT}}_{\mathrm{MC}}F_{\sigma}(\theta_{0}),...\widehat\nabla^{\mathrm{AT}}_{\mathrm{MC}}F_{\sigma}(\theta_{i}),...$ 
obtained during the optimization procedure. We maintain and update on-the-fly the covariance matrix $\mathrm{Cov}_{t}$, where $t$ stands for the number of completed iterations, 
in the form of the symmetric matrix $\mathrm{SVD}$-decomposition 
$\mathrm{Cov}_{t} = \mathbf{Q}_{t}^{\top}\Sigma_{t}\mathbf{Q}_{t} \in \mathbf{R}^{d}$. 
When the new approximate gradient  $\widehat\nabla^{\mathrm{AT}}_{\mathrm{MC}}F_{\sigma}(\theta_{t})$ arrives, the update of the covariance matrix is driven by the following equation, 
reflecting data decay process, where $\mathbf{x}_{t} = \widehat\nabla^{\mathrm{AT}}_{\mathrm{MC}}F_{\sigma}(\theta_{t})$:
\begin{equation}
\label{cov}
\mathbf{Q}_{t+1}^{\top}\Sigma_{t+1}\mathbf{Q}_{t+1} = 
\lambda \mathbf{Q}_{t}^{\top}\Sigma_{t}\mathbf{Q}_{t} + (1-\lambda)
\mathbf{x}_{t} \mathbf{x}_{t} ^{\top},
\end{equation}

To conduct the update cheaply, it suffices to observe that the RHS of Equation \ref{cov}
can be rewritten as: $\lambda \mathbf{Q}_{t}^{\top}\Sigma_{t}\mathbf{Q}_{t} + (1-\lambda)
\mathbf{x}_{t} \mathbf{x}_{t} ^{\top} = 
\mathbf{Q}_{t}^{\top}(\lambda \Sigma_{t} + (1-\lambda)\mathbf{Q}_{t}\mathbf{x}_{t}
(\mathbf{Q}_{t}\mathbf{x}_{t})^{\top})\mathbf{Q}_{t}$.
Now, using the fact that for a matrix of the form $\mathbf{D}+\mathbf{u}\mathbf{u}^{\top}$,
we can get its decomposition in time $O(d^{2})$ \citep{golub}, we obtain an algorithm performing updates in quadratic time. That in practice 
suffices since the bottleneck of the computations is in querying $F$ and additional overhead related to updating $\mathcal{L}^{\mathrm{ES}}_{\mathrm{active}}$ is negligible.


\paragraph{ES-active subspaces versus active subspaces:} Our mechanism for constructing $\mathcal{L}^{\mathrm{ES}}_{\mathrm{active}}$ is inspired by the recent theory of active subspaces \cite{constantine}, developed to determine the most important directions in the space of input parameters of high-dimensional blackbox functions such as computer simulations. 

The \textit{active subspace} of a differentiable function $F:\mathbb{R}^{d} \rightarrow \mathbb{R}$, square-integrable with respect to the given probabilistic density function $\rho:\mathbb{R}^{d} \rightarrow \mathbb{R}$, is given as a linear subspace $\mathcal{L}_{\mathrm{active}}$ defined by the first $r$ (for a fixed $r < d$)
eigenvectors of the following $d \times d$ symmetric positive definite matrix:
\begin{equation}
\mathrm{Cov} = \int_{\mathbf{x} \in \mathbb{R}^{d}} \nabla F(\mathbf{x}) \nabla F(\mathbf{x})^{\top} \rho(\mathbf{x}) d\mathbf{x}
\end{equation}
Density function $\rho$ determines where compact representation of $F$ is needed. In our approach we do not assume that $\nabla F$ exists, but the key difference between $\mathcal{L}^{\mathrm{ES}}_{\mathrm{active}}$ and $\mathcal{L}_{\mathrm{active}}$ lies somewhere else.

The goal of $\mathrm{ASEBO}$ is to avoid approximating the exact gradient $\nabla F (\mathbf{x}) \in \mathbb{R}^{d}$ which is what makes standard ES methods very expensive 
and which is done in \cite{constantine_3} via gradient sketching techniques combined with finite difference approaches (standard methods of choice for ES baselines). 
\vspace{-3mm}  
\begin{algorithm}[H]
\textbf{Hyperparameters:} smoothing parameter $\sigma$, horizon $C$, learning rate $\alpha$, probability regularizer $\beta$, initial probability parameter $q^{t}_{0} \in (0,1)$.\\
\textbf{Input: }subspaces: $\mathcal{L}^{\mathrm{ES}}_{\mathrm{active}}$,
$\mathcal{L}_{\mathrm{active}}^{\mathrm{ES},\perp}$, function $F$, vector $\theta_{t}$ \\
\textbf{Output:}  \\
\For{$l = 1, \cdots , C+1$}{
1. Compute $p^{t}_{l-1} =(1-2\beta) q^{t}_{l-1} + \beta$
and sample $a^{t}_l\sim \mathrm{Ber}(p^{t}_{l})$. \\
3. If $a^{t}_l = 1$, sample $\mathbf{g}_l \sim \mathcal{N}(0, \sigma \mathbf{I}_{\mathcal{L}_{\mathrm{active}}^{\mathrm{ES}}})$, 
   otherwise sample $\mathbf{g}_l \sim \mathcal{N}(0, \sigma \mathbf{I}_{\mathcal{L}_{\mathrm{active}}^{\mathrm{ES}, \perp}})$.\\
4. Compute $v_l = \frac{1}{2\sigma}\left(  F(\theta_t + \mathbf{g}_l) - F(\theta_t - \mathbf{g}_l)  \right) $.\\
5. Set $\mathbf{e}_l =(1-2\beta)\begin{bmatrix} \left(-\frac{a^{t}_l(\mathrm{dim}(\mathcal{L}_{\mathrm{active}}^{\mathrm{ES}})+2)}{(p^{t}_l)^{3}} \right) \\
\left(-\frac{(1-a^{t}_l)(\mathrm{dim}(\mathcal{L}^{\mathrm{ES}, \perp}_{\mathrm{active}}) +2)}{(1-p^{t}_l)^3} \right) \end{bmatrix} v_l^2 $.\\
6.  Set $q^{t}_{l} = \frac{q^{t}_{l-1} \exp(-\alpha \mathbf{e}_l(1))}{ q^{t}_{l-1} \exp(-\alpha \mathbf{e}_l(1))   + (1-q^{t}_{l-1}) \exp(-\alpha \mathbf{e}_l(2))} $.
}
\textbf{Return:} $p_{C}$.
 \caption{Explore estimator via exponentiated sampling}
\label{Alg: exponentiated_sampling}
\end{algorithm}
\vspace{-3mm}  
Instead, in $\mathrm{ASEBO}$ an ES-active subspace $\mathcal{L}_{\mathrm{active}}^{\mathrm{ES}}$ is itself used to define sensing directions and the number of chosen 
samples $k$ is given by the dimensionality of $\mathcal{L}_{\mathrm{active}}^{\mathrm{ES}}$. 
This drastically reduces sampling complexity, but comes at a price of risking the optimization to be trapped in the 
fixed lower-dimensional space that will not be representative for gradient data as optimization progresses. 
We propose a solution requiring only a constant number of extra queries to $F$ in the next sections.

\subsection{Exploration-exploitation trade-off: Adaptive Exploration Mechanism}  
\label{sec:exploration}
The procedure described above needs to be accompanied with an exploration strategy that will determine how frequently to 
choose sensing directions outside the constructed on-the-fly lower-dimensional ES-subspace $\mathcal{L}_{\mathrm{active}}^{\mathrm{ES}}$. 
Our exploration strategies will be encoded by hybrid probabilistic distributions for sampling sensing directions. 
The frequency of sensing from the distributions with covariance matrices obtained from $\mathcal{L}^{\mathrm{ES}}_{\mathrm{active}}$ (corresponding to exploitation) 
and from its orthogonal complement $\mathcal{L}^{\mathrm{ES}, \perp}_{\mathrm{active}}$ or entire space (corresponding to exploration) will be given by weights encoding the 
importance of exploitation versus exploration in any given iteration of the optimization. 
For a vector $\mathbf{x} \in \mathbb{R}^{d}$ denote by $\mathbf{x}_{\mathrm{active}}$
its projection onto $\mathcal{L}^{\mathrm{ES}}_{\mathrm{active}}$ and by $\mathbf{x}_{\perp}$ its projection onto $\mathcal{L}^{\mathrm{ES},\perp}_{\mathrm{active}}$.
The useful metric that can be used to update the above weights in an online manner in the $t^{th}$ iteration of the algorithm is the ratio: 
$
r = \frac{\|(\nabla F_{\sigma}(\theta_{t}))_{\mathrm{active}}\|_{2}}{\|(\nabla F_{\sigma}(\theta_{t}))_{\mathrm{\perp}}\|_{2}}.
$
Smaller values of $r$ indicate that current active subspace is not representative enough for the gradient and more aggressive exploration needs to be conducted.
In practice, we do not compute $r$ explicitly, but rather its approximated version $\widehat{r}$. 

One can simply take: 
$\widehat{r} = \frac{\|(\widehat\nabla^{\mathrm{AT}}_{\mathrm{MC}}F_{\sigma}(\theta_{t-1}))_{\mathrm{active}}\|_{2}}
{\|(\widehat\nabla^{\mathrm{AT}}_{\mathrm{MC}}F_{\sigma}(\theta_{t-1}))_{\mathrm{\perp}}\|_{2}}$, where 
$\widehat\nabla^{\mathrm{AT}}_{\mathrm{MC}}F_{\sigma}(\theta_{t-1})$ is obtained in the previous iteration.
But we can do better. It suffices to separately estimate $\|(\nabla F_{\sigma}(\theta_{t}))_{\mathrm{active}}\|_{2}$
and $\|(\nabla F_{\sigma}(\theta_{t}))_{\mathrm{\perp}}\|_{2}$. However we do not aim to estimate $(\nabla F_{\sigma}(\theta_{t}))_{\mathrm{active}}$
and $(\nabla F_{\sigma}(\theta_{t}))_{\mathrm{\perp}}$. That would be equivalent to computing exact estimate of $\nabla F_{\sigma}(\theta_{t})$, defeating 
the purpose of $\mathrm{ASEBO}$. Instead, we note that estimating the length of the unknown high-dimensional vector is much simpler than
estimating the vector itself and can be done in the probabilistic manner with arbitrary precision via the set of dot-product queries of size independent 
from dimensionality $d$ via compressed sensing methods. We refine this approach and propose more accurate contextual 
bandits method that also relies on dot-product queries applied in the ES-context, but aims to directly approximate optimal probabilities of sampling from 
$\mathcal{L}^{\mathrm{ES}}_{\mathrm{active}}$ rather than approximating gradients components' lengths (see $\mathrm{Algorithm}$ 2 box, the compressed sensing baseline 
is presented in the Appendix). The related computational overhead is measured in constant number of extra function queries, negligible in practice.

\vspace{-3mm} 
\subsection{The Algorithm}
\label{sec:algo}    

$\mathrm{ASEBO}$ is given in the $\mathrm{Algorithm}$ 1 box.
The algorithm we apply to score relative importance of sampling from the ES-active subspace $\mathcal{L}^{\mathrm{ES}}_{\mathrm{active}}$
versus from outside $\mathcal{L}^{\mathrm{ES}}_{\mathrm{active}}$ is in the $\mathrm{Algorithm}$ 2 box.

As we have already mentioned, it uses bandits method do determine optimal probability of sampling from
$\mathcal{L}^{\mathrm{ES}}_{\mathrm{active}}$. In the next section we show that by using these techniques we can substantially reduce the variance of ES blackbox gradient estimators
if ES-active subspaces approximate the gradient data well (which is the case for RL blackbox functions as presented in Fig. \ref{fig:pca_motivation}).
Horizon lengths $C$ in Algorithm 2 which determines the number of extra function queries should be in practice chosen as small constants.
In each iteration of Algorithm 1 the number of function queries is proportional to the dimensionality of the ES-active subspace
$\mathcal{L}^{\mathrm{ES}}_{\mathrm{active}}$ rather than the original space. 
\vspace{-3mm}

\section{Theoretical Results}\label{sec:theory}

We provide here theoretical guarantees for the ASEBO sampling mechanism (in Algorithm 1), where sensing directions $\{ \mathbf{g}_i\}$ at time $t$ are sampled from the hybrid distribution $\widehat{P}$:
with probability $p^{t}$ from $\mathcal{N}(0, \mathbf{I}_{\mathcal{L}_{\mathrm{active}}})$ and with probability $1-p^{t}$ from $\mathcal{N}(0, \mathbf{I}_{\mathcal{L}_{\mathrm{active}}^\perp} )$.


Following notation in Algorithm 1, let $\mathbf{U}^{\mathrm{act}} \in \mathbb{R}^{d\times r}$ be 
obtained by stacking together vectors of some orthonormal basis of $\mathcal{L}^{\mathrm{ES}}_{\mathrm{active}}$,
where $\mathrm{dim}(\mathcal{L}^{\mathrm{ES}}_{\mathrm{active}}) = r$ and let $\mathbf{U}^{\perp} \in \mathbb{R}^{d\times (d-r)}$
be obtained my stacking together vectors of some orthonormal basis of its orthogonal complement $\mathcal{L}^{\mathrm{ES}, \perp}_{\mathrm{active}}$.
Denote by $\sigma$ a smoothing parameter. We make the following regularity assumptions on $F$:
\begin{itemize}[noitemsep,topsep=0pt, leftmargin=*]
    \item[] \textbf{Assumption 1}. $F$ is $L-$Lipschitz, i.e. for all $\theta, \theta' \in \mathbb{R}^d$, $|F(\theta) - F(\theta')| \leq L \| \theta - \theta'\|_{2}$.
    \item[] \textbf{Assumption 2}. $F$ has a $\tau$-smooth third order derivative tensor with respect to $\sigma>0$, 
    so that $F(\theta + \sigma \mathbf{g}) = F(\theta) + \sigma \nabla F(\theta)^\top 
    \mathbf{g} + \frac{\sigma^2}{2}\mathbf{g}^\top H(\theta )\mathbf{g} + \frac{1}{6} \sigma^3 F'''(\theta)[\mathbf{v},\mathbf{v},\mathbf{v}]$ 
    for some $\mathbf{v} \in \mathbb{R}^{d}$ ($\|\mathbf{v}\|_{2} \leq \|\mathbf{g}\|_{2}$) 
    satisfying  $|F'''(\theta)[\mathbf{v},\mathbf{v},\mathbf{v}] \leq \tau \|\mathbf{v} \|_{2}^3 \leq \tau \| \mathbf{g}\|_{2}^3$.
\end{itemize}

Observe that:
$
\mathbb{E}_{\mathbf{g} \sim \widehat{P} } \left[   \mathbf{g}\mathbf{g}^\top         \right] = \left( p^t \mathbf{U}^{\mathrm{act}}\left(\mathbf{U}^{\mathrm{act}}   \right)^\top + (1-p^t) \mathbf{U}^{\perp}\left(  \mathbf{U}^{\perp} \right)^\top \right)$. Define $C_{1}=\left( p^t \mathbf{U}^{\mathrm{act}}\left(\mathbf{U}^{\mathrm{act}}   \right)^\top + (1-p^t) \mathbf{U}^{\perp}\left(  \mathbf{U}^{\perp} \right)^\top \right)$.
Let $\widehat{\nabla}^{\mathrm{AT},\mathrm{asebo}}_{\mathrm{MC},k=1}F_{\sigma}(\theta) = \mathbf{C}_1^{-1} \frac{F(\theta + \sigma \mathbf{g})\mathbf{g} + F(\theta + \sigma \mathbf{g}) (-\mathbf{g})}{2\sigma} $ 
be the gradient estimator corresponding to $\widehat{P}$. We will assume that $\sigma$ is small enough, i.e. $\sigma < \frac{1}{35}\sqrt{\frac{\epsilon \min(p^t, 1-p^t)}{\tau  d^3 \max(L, 1)} }$ for some precision parameter $\epsilon > 0$. Our first result shows that under these assumptions, baseline and $\mathrm{ASEBO}$ estimators of $\nabla_{\sigma}F(\theta)$ are also good estimators of $\nabla F(\theta)$:

\begin{lemma}\label{lemma::gradients_approximation}
If $F$ satisfies Assumptions 1 and 2, the estimators $\widehat{\nabla}^{\mathrm{AT},\mathrm{base}}_{\mathrm{MC},k=1}F_{\sigma}(\theta)$ and 
$\widehat{\nabla}^{\mathrm{AT},\mathrm{asebo}}_{\mathrm{MC},k=1}F_{\sigma}(\theta)$ are close to the true gradient $\nabla F(\theta)$, i.e.:
$\left \| \mathbb{E}_{\mathbf{g} \sim \mathcal{N}(0, \mathbf{I}_{d})}\left[  \widehat{\nabla}^{\mathrm{AT},\mathrm{base}}_{\mathrm{MC},k=1}F_{\sigma}(\theta)\right] - \nabla F(\theta) \right \| \leq \epsilon$ and $\left \| \mathbb{E}_{\mathbf{g} \sim \widehat{P}}\left[  \widehat{\nabla}^{\mathrm{AT},\mathrm{asebo}}_{\mathrm{MC},k=1}F_{\sigma}(\theta)\right] - \nabla F(\theta) \right \| \leq\epsilon$. 
\end{lemma}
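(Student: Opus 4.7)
The plan is to derive both bias bounds from a single antithetic Taylor expansion and then distinguish the two estimators only through the covariance $\mathbb{E}[\mathbf{g}\mathbf{g}^\top]$. Using Assumption 2 on $F(\theta \pm \sigma \mathbf{g})$ and subtracting, the constant and Hessian terms cancel by antithesis, leaving
\[
\frac{F(\theta+\sigma \mathbf{g}) - F(\theta - \sigma \mathbf{g})}{2\sigma} \;=\; \nabla F(\theta)^\top \mathbf{g} + \tilde R(\mathbf{g}),
\]
where $|\tilde R(\mathbf{g})| \leq \tfrac{\sigma^2 \tau}{6}\|\mathbf{g}\|_2^3$. Multiplying by $\mathbf{g}$ cleanly separates each estimator into a linear "signal" $(\nabla F(\theta)^\top \mathbf{g})\mathbf{g}$ and a cubic "error" $\tilde R(\mathbf{g})\mathbf{g}$, reducing the lemma to a bound on $\mathbb{E}[\tilde R(\mathbf{g})\mathbf{g}]$ under each of the two sampling distributions (possibly after a $\mathbf{C}_1^{-1}$ correction).

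For the baseline, $\mathbb{E}_{\mathbf{g} \sim \mathcal{N}(0,\mathbf{I}_d)}[\mathbf{g}\mathbf{g}^\top] = \mathbf{I}_d$, so the signal integrates exactly to $\nabla F(\theta)$ and the residual is bounded by $\tfrac{\sigma^2\tau}{6}\mathbb{E}[\|\mathbf{g}\|_2^4] = \tfrac{\sigma^2\tau\, d(d+2)}{6}$. For the ASEBO estimator, the hybrid construction is tailored so that $\mathbb{E}_{\widehat P}[\mathbf{g}\mathbf{g}^\top] = \mathbf{C}_1$, and pre-multiplication by $\mathbf{C}_1^{-1}$ again collapses the signal to $\nabla F(\theta)$. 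The crucial observation is that $\mathbf{C}_1 = p^t\, \mathbf{U}^{\mathrm{act}}(\mathbf{U}^{\mathrm{act}})^\top + (1-p^t)\,\mathbf{U}^\perp(\mathbf{U}^\perp)^\top$ is already a spectral decomposition into two orthogonal complementary subspaces, so $\|\mathbf{C}_1^{-1}\|_2 = 1/\min(p^t, 1-p^t)$. Under $\widehat P$ the sample $\mathbf{g}$ is a standard Gaussian restricted to one of these subspaces, so $\mathbb{E}_{\widehat P}[\|\mathbf{g}\|_2^4] \leq d(d+2)$ remains, and the bias is at most $\tfrac{\sigma^2\tau\, d(d+2)}{6\min(p^t,1-p^t)}$.

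Finally, substituting the stated threshold $\sigma^2 < \frac{\epsilon\min(p^t,1-p^t)}{35^2\,\tau d^3 \max(L,1)}$ into either bias expression yields a value strictly below $\epsilon$, closing both inequalities. The main technical point that needs care is the spectral structure of $\mathbf{C}_1$: one must use the orthogonality of $\mathbf{U}^{\mathrm{act}}$ and $\mathbf{U}^\perp$ to conclude that the eigenvalues of $\mathbf{C}_1$ are exactly $p^t$ and $1-p^t$, and hence that $\mathbf{C}_1^{-1}$ does not blow up except through these two scalars. Assumption 1 and the factor $\max(L,1)$ do not appear in this bias calculation; they are presumably put into the $\sigma$-threshold so that the same choice of $\sigma$ will also discharge the variance bounds that follow (where $F$-values themselves, not just their differences, enter squared).
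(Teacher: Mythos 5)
Your proof is correct and follows essentially the same route as the paper: the antithetic Taylor expansion with cubic remainder $|\tilde R(\mathbf{g})|\leq \tfrac{\sigma^2\tau}{6}\|\mathbf{g}\|_2^3$, the covariance identity $\mathbb{E}[\mathbf{g}\mathbf{g}^\top]$ collapsing the linear term to $\nabla F(\theta)$ (after $\mathbf{C}_1^{-1}$ in the ASEBO case), and absorbing the fourth-moment remainder via the smallness condition on $\sigma$. The only cosmetic difference is bookkeeping of the ASEBO remainder: the paper splits it into the $\mathbf{U}^{\mathrm{act}}$ and $\mathbf{U}^{\perp}$ components, picking up factors $1/p^t$ and $1/(1-p^t)$ with dimensions $d_{\mathrm{active}}$, $d_{\perp}$ separately, whereas you bound $\|\mathbf{C}_1^{-1}\|_2=1/\min(p^t,1-p^t)$ and use a single bound $\mathbb{E}_{\widehat P}\left[\|\mathbf{g}\|_2^4\right]\leq d(d+2)$ --- slightly looser, but still within the stated $\sigma$-threshold.
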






               

\subsection{Variance reduction via non isotropic sampling}
We show now that under sampling strategy given by distribution $\widehat{P}$, the variance of the gradient estimator can be made smaller by choosing the probability parameter $p^t$ appropriately. 
Denote: $d_{\mathrm{active}} = \mathrm{dim}(\mathcal{L}_{\mathrm{active}}^{\mathrm{ES}})$ and $d_{\perp} = \mathrm{dim}(\mathcal{L}_{\mathrm{active}}^{\mathrm{ES}, \perp})$.
Let $\Gamma := \frac{d_{\mathrm{active}}+2}{p^t} s_{\mathbf{U}^{\mathrm{act}}} + \frac{d_{\perp} + 2}{1-p^t} s_{\mathbf{U}^{\perp}} - \|\nabla F(\theta)\|^2$.

\begin{theorem}\label{theorem:combined_theorem_variance1}
The following holds for $s_{\mathbf{U}^{\mathrm{act}}} = \| (\mathbf{U}^{\mathrm{act}})^\top \nabla F(\theta)  \|_{2}^2$ and $s_{\mathbf{U}^{\perp}}  = \|  (\mathbf{U}^{\perp})^\top \nabla F(\theta)  \|_{2}^2 $: 
\begin{enumerate}[noitemsep,topsep=0pt, leftmargin=*]
    \item The variance of ${\widehat{\nabla}^{\mathrm{AT},\mathrm{asebo}}_{\mathrm{MC},k=1}F_{\sigma}(\theta)}$ is close to $\Gamma$, i.e. 
$| \mathrm{Var}[{\widehat{\nabla}^{\mathrm{AT},\mathrm{asebo}}_{\mathrm{MC},k=1}F_{\sigma}(\theta)}] - \Gamma | \leq \epsilon$.
\item The choice of $p^t$ that minimizes $\Gamma$ satisfies $p^t_* := \frac{ \sqrt{ (s_{\mathbf{U}^{\mathrm{act}}})(d_{\mathrm{active}} + 2) } }{   \sqrt{(s_{\mathbf{U}^{\mathrm{act}}})(d_{\mathrm{active}} + 2)} +  \sqrt{(s_{\mathbf{U}^{\perp}})(d_{\mathbf{U}^{\perp}} + 2)}}$ and the optimal variance $\mathrm{Var}_{\mathrm{opt}}$ corresponding to $p^t_*$ satisfies:
$|\mathrm{Var}_{\mathrm{opt}} - \Delta| \leq \epsilon$ 
for $\Delta =      \left[  \sqrt{(s_{\mathbf{U}^{\mathrm{act}}})(d_{\mathrm{active}} + 2)} +    \sqrt{(s_{\mathbf{U}^{\perp}})(d_{\perp} + 2)} \right]^2 - \| \nabla F(\theta) \|^2$.
\item  $\mathrm{Var}_{\mathrm{opt}} \leq \mathrm{Var}[{\widehat{\nabla}^{\mathrm{AT},\mathrm{base}}_{\mathrm{MC},k=1}F_{\sigma}(\theta)}] + \epsilon -  \underbrace{| \sqrt{(s_{\mathbf{U}^{\perp}})(d_{\mathrm{active}} + 2)} -  \sqrt{(s_{\mathbf{U}^{\mathrm{act}}})(d_{\perp} + 2)}  |^2- 2 \| \nabla F(\theta)\|^2}_{\lambda}.$
Furthermore, slack variable $\lambda$ is always nonnegative.


\end{enumerate}
\end{theorem}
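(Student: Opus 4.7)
The plan is to reduce all three parts to a single Gaussian-moment computation applied to the antithetic estimator after a Taylor expansion in $\sigma$. Expanding $F(\theta\pm\sigma\mathbf{g})$ via Assumption~2, the antithetic difference kills $F(\theta)$ and the even Hessian contribution, leaving $\frac{F(\theta+\sigma\mathbf{g})-F(\theta-\sigma\mathbf{g})}{2\sigma} = \nabla F(\theta)^{\top}\mathbf{g} + R(\theta,\mathbf{g},\sigma)$ with $|R|\leq \frac{\sigma^{2}\tau}{6}\|\mathbf{g}\|^{3}$. The hypothesis $\sigma \leq \frac{1}{35}\sqrt{\epsilon\min(p^t,1-p^t)/(\tau d^3 \max(L,1))}$ is calibrated so that, after multiplying by $\mathbf{g}$ and $\mathbf{C}_{1}^{-1}$ and integrating against $\widehat P$, the contribution of $R$ to both the mean and the second moment is absorbed into the $\epsilon$ slack, exactly as Lemma~\ref{lemma::gradients_approximation} already does at the level of the mean.

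For part~1 I would compute the leading-order variance. Since $\mathbf{U}^{\mathrm{act}}(\mathbf{U}^{\mathrm{act}})^{\top}$ and $\mathbf{U}^{\perp}(\mathbf{U}^{\perp})^{\top}$ are complementary orthogonal projections, $\mathbf{C}_{1}^{-1}=\frac{1}{p^{t}}\mathbf{U}^{\mathrm{act}}(\mathbf{U}^{\mathrm{act}})^{\top} + \frac{1}{1-p^{t}}\mathbf{U}^{\perp}(\mathbf{U}^{\perp})^{\top}$. Conditioning on the Bernoulli selector, in the active branch we have $\mathbf{g}=\mathbf{U}^{\mathrm{act}}\mathbf{h}$ with $\mathbf{h}\sim\mathcal{N}(0,\mathbf{I}_{d_{\mathrm{active}}})$ and $\mathbf{C}_{1}^{-1}\mathbf{g}=\mathbf{g}/p^{t}$, so that branch contributes $\frac{1}{(p^{t})^{2}}\mathbb{E}[(\mathbf{g}^{\top}\nabla F(\theta))^{2}\|\mathbf{g}\|^{2}]$. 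The standard Gaussian identity $\mathbb{E}[(\mathbf{a}^{\top}\mathbf{h})^{2}\|\mathbf{h}\|^{2}]=(r+2)\|\mathbf{a}\|^{2}$ applied with $\mathbf{a}=(\mathbf{U}^{\mathrm{act}})^{\top}\nabla F(\theta)$ and $r=d_{\mathrm{active}}$ evaluates this to $\frac{d_{\mathrm{active}}+2}{(p^{t})^{2}}s_{\mathbf{U}^{\mathrm{act}}}$; the symmetric computation on $\mathcal{L}^{\mathrm{ES},\perp}_{\mathrm{active}}$ yields $\frac{d_{\perp}+2}{(1-p^{t})^{2}}s_{\mathbf{U}^{\perp}}$. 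Weighting by $p^{t}$ and $1-p^{t}$ respectively and subtracting $\|\nabla F(\theta)\|^{2}$ (which equals $\|\mathbb{E}\widehat{\nabla}^{\mathrm{AT},\mathrm{asebo}}_{\mathrm{MC},k=1}F_{\sigma}(\theta)\|^{2}$ up to $\epsilon$ by Lemma~\ref{lemma::gradients_approximation}) produces $\Gamma$, as claimed.

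For part~2, differentiating $\Gamma$ in $p^{t}$ gives the stationarity condition $\frac{(d_{\perp}+2)s_{\mathbf{U}^{\perp}}}{(1-p^{t})^{2}}=\frac{(d_{\mathrm{active}}+2)s_{\mathbf{U}^{\mathrm{act}}}}{(p^{t})^{2}}$, which — together with strict convexity of $\Gamma$ on $(0,1)$ — yields the unique minimizer $p^{t}_{*}=A/(A+B)$ with $A=\sqrt{(d_{\mathrm{active}}+2)s_{\mathbf{U}^{\mathrm{act}}}}$ and $B=\sqrt{(d_{\perp}+2)s_{\mathbf{U}^{\perp}}}$; substitution collapses $\Gamma(p^{t}_{*})$ to $(A+B)^{2}-\|\nabla F(\theta)\|^{2}=\Delta$. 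For part~3, the isotropic baseline has variance $(d+2)\|\nabla F(\theta)\|^{2}-\|\nabla F(\theta)\|^{2}$ by the same Gaussian identity in ambient dimension $d$. Writing $\|\nabla F(\theta)\|^{2}=s_{\mathbf{U}^{\mathrm{act}}}+s_{\mathbf{U}^{\perp}}$ and $d=d_{\mathrm{active}}+d_{\perp}$, a direct expansion shows $\mathrm{Var}[\widehat{\nabla}^{\mathrm{AT},\mathrm{base}}_{\mathrm{MC},k=1}F_{\sigma}(\theta)]-\Delta = d_{\perp}s_{\mathbf{U}^{\mathrm{act}}} + d_{\mathrm{active}}s_{\mathbf{U}^{\perp}} - 2\sqrt{s_{\mathbf{U}^{\mathrm{act}}}s_{\mathbf{U}^{\perp}}(d_{\mathrm{active}}+2)(d_{\perp}+2)}$, which matches $\lambda$ after expanding $\left|\sqrt{s_{\mathbf{U}^{\perp}}(d_{\mathrm{active}}+2)}-\sqrt{s_{\mathbf{U}^{\mathrm{act}}}(d_{\perp}+2)}\right|^{2}-2\|\nabla F(\theta)\|^{2}$.

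The main obstacle is establishing the nonnegativity of $\lambda$. The inequality $d_{\perp}s_{\mathbf{U}^{\mathrm{act}}} + d_{\mathrm{active}}s_{\mathbf{U}^{\perp}} \geq 2\sqrt{s_{\mathbf{U}^{\mathrm{act}}}s_{\mathbf{U}^{\perp}}(d_{\mathrm{active}}+2)(d_{\perp}+2)}$ does not follow from plain AM-GM, which only delivers the weaker factor $\sqrt{d_{\mathrm{active}}d_{\perp}}$. I would attack this via an extremal argument: embed both the ASEBO family and the isotropic baseline in a common family of importance-reweighted antithetic estimators so that the minimality of $\mathrm{Var}_{\mathrm{opt}}$ among convex reweightings forces $\mathrm{Var}_{\mathrm{opt}}\leq \mathrm{Var}[\widehat{\nabla}^{\mathrm{AT},\mathrm{base}}_{\mathrm{MC},k=1}F_{\sigma}(\theta)]+O(\epsilon)$, whence $\lambda\geq 0$ follows from the algebraic identity of part~3. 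Pinning down exactly which enlarged family makes this comparison valid — or, alternatively, isolating the practical regime (\emph{e.g.} $d_{\mathrm{active}}\ll d_{\perp}$ with $s_{\mathbf{U}^{\mathrm{act}}}\gg s_{\mathbf{U}^{\perp}}$, which is where ASEBO is expected to help) under which the AM-GM-type inequality closes — is the most delicate step of the proof and the one I would spend the most care on.
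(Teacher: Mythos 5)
Your handling of parts 1 and 2, and of the displayed inequality in part 3, is correct and essentially coincides with the paper's own argument: conditioning on the Bernoulli selector and using $\mathbb{E}[(\mathbf{a}^\top\mathbf{h})^2\|\mathbf{h}\|^2]=(r+2)\|\mathbf{a}\|^2$ gives the main component $\Gamma$ of the variance with the bias terms absorbed into $\epsilon$ by the smallness of $\sigma$; stationarity plus convexity of $\Gamma$ in $p^t$ gives $p^t_*$ and $\Delta$; and your expansion $\mathrm{Var}^{M}_{\mathrm{base}}-\Delta = d_{\perp}s_{\mathbf{U}^{\mathrm{act}}}+d_{\mathrm{active}}s_{\mathbf{U}^{\perp}}-2\sqrt{s_{\mathbf{U}^{\mathrm{act}}}s_{\mathbf{U}^{\perp}}(d_{\mathrm{active}}+2)(d_{\perp}+2)}=\lambda$ is exactly the algebra in the appendix (it is Lagrange's identity $(a_1b_1+a_2b_2)^2+(a_1b_2-a_2b_1)^2=(a_1^2+a_2^2)(b_1^2+b_2^2)$ with $a_i^2=d_{\cdot}+2$, $b_i^2=s_{\cdot}$), so the inequality of part 3 is an identity at the level of main components and holds with the stated $\epsilon$ slack.

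The genuine gap is the final sub-claim, and the route you propose cannot close it. The extremal argument fails because the isotropic baseline is not a member of the family over which $p^t$ is optimized: a draw from $\widehat{P}$ lies entirely in $\mathcal{L}^{\mathrm{ES}}_{\mathrm{active}}$ or entirely in $\mathcal{L}^{\mathrm{ES},\perp}_{\mathrm{active}}$, whereas an isotropic Gaussian has components in both, and no value of $p^t$ reproduces its variance $(d+1)\|\nabla F(\theta)\|^2$; hence minimality of $\Gamma$ over $p^t$ gives no comparison with the baseline. Indeed $\lambda\geq 0$ is simply false in general: take $d_{\mathrm{active}}=d_{\perp}$ and $s_{\mathbf{U}^{\mathrm{act}}}=s_{\mathbf{U}^{\perp}}=\|\nabla F(\theta)\|^2/2$; then $\sqrt{s_{\mathbf{U}^{\perp}}(d_{\mathrm{active}}+2)}=\sqrt{s_{\mathbf{U}^{\mathrm{act}}}(d_{\perp}+2)}$ and $\lambda=-2\|\nabla F(\theta)\|^2<0$. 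Your observation that AM--GM only yields the weaker factor $\sqrt{d_{\mathrm{active}}d_{\perp}}$ was the right warning sign: no unconditional inequality is available, and the paper's appendix does not prove one either --- its domination theorem derives precisely the condition $\bigl|\sqrt{d_{\mathrm{active}}+2}\,\|(\nabla F(\theta))_{\perp}\|-\sqrt{d_{\perp}+2}\,\|(\nabla F(\theta))_{\mathrm{active}}\|\bigr|\geq\sqrt{2}\,\|\nabla F(\theta)\|$, which is equivalent to $\lambda\geq 0$, and the subsequent result assumes it (it holds, e.g., when $\|(\nabla F(\theta))_{\perp}\|=0$ and $d_{\perp}\geq 1$, the regime where ASEBO is meant to help). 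So the correct resolution is to state the nonnegativity of $\lambda$ as a conditional claim under that hypothesis rather than to seek an unconditional proof.
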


Theorem implies that when $s_{\mathbf{U}^{\mathrm{act}}} = (1-\alpha)\| \nabla F(\theta) \|_2^2$ and $s_{\mathbf{U}^\perp} = \alpha \| \nabla F(\theta) \|_2^2$, for some $\alpha \in (0,1)$, we have: $\mathrm{Var}[{{\widehat{\nabla}^{\mathrm{AT},\mathrm{base}}_{\mathrm{MC},k=1}F_{\sigma}(\theta)}}] \approx (d+1)\| \nabla F(\theta)\|^2$ whereas $\mathrm{Var}_{\mathrm{opt}} = \mathcal{O}\left( (1-\alpha) (d_{\mathrm{active}} + 1) + \alpha(d_{\perp} + 1)  \right)$. When $d_{\mathrm{active}} << d$ and $\alpha << 1$: $\mathrm{Var}_{\mathrm{opt}} \ll \mathrm{Var}[{{\widehat{\nabla}^{\mathrm{AT},\mathrm{base}}_{\mathrm{MC},k=1}F_{\sigma}(\theta)}}]$.

\subsection{Adaptive Mirror Descent}
In Theorem \ref{theorem:combined_theorem_variance1} we showed that for appropriate choices of $\mathcal{L}^{\mathrm{ES}}_{\mathrm{active}}$ and $p_{t}$, the 
gradient estimator ${\widehat{\nabla}^{\mathrm{AT},\mathrm{asebo}}_{\mathrm{MC},k=1}F_{\sigma}(\theta)}$ will have
significantly smaller variance than ${\widehat{\nabla}^{\mathrm{AT},\mathrm{base}}_{\mathrm{MC},k=1}F_{\sigma}(\theta)}$. 
In this section we show that Algorithm \ref{Alg: exponentiated_sampling} provides an adaptive way to choose $p^t$. 
Using tools from online learning theory, we provide regret guarantees that quantify the rate at which this 
Algorithm \ref{Alg: exponentiated_sampling} minimizes the variance of ${\widehat{\nabla}^{\mathrm{AT},\mathrm{asebo}}_{\mathrm{MC},k=1}F_{\sigma}(\theta)}$ 
and converges to the optimal $p^t_*$.

Let $\mathbf{p}^t_{l} = \binom{p^t_{l}}{1-p^t_{l}}$. The main component $\Gamma$ of 
the variance of ${\widehat{\nabla}^{\mathrm{AT},\mathrm{asebo}}_{\mathrm{MC},k=1}F_{\sigma}(\theta)}$ as a function of $\mathbf{p}^t_l$ equals  $\Gamma = \ell(\mathbf{p}^t_l) =\frac{d_{\mathrm{active}} + 2}{\mathbf{p}^t_l(1)} s_{\mathbf{U}^{\mathrm{act}}} + \frac{ d_{\perp} +2}{\mathbf{p}^t_l(2)} s_{\mathbf{U}^{\perp}} - \|\nabla F(\theta) \|^2$ (Theorem \ref{theorem:combined_theorem_variance1}). We have:

\begin{theorem}\label{theorem::regret_mirror_descent}
Let $\Delta_2$ be the a 2-d simplex. Under assumptions: 1 and 2, if
$\sigma < \frac{1}{35}\sqrt{\frac{\epsilon \min(p^t, 1-p^t)}{\tau  d^3 \max(L, 1)} }$, $\alpha = \frac{2\beta^2}{\sqrt{   C[(d_{\mathrm{active}}+2)^2 s_{\mathbf{U}^{\mathrm{act}}}^2+(d_{\mathrm{\perp}}+2)s_{\mathbf{U}^{\perp}}^2] }}$ and $\epsilon = \frac{\beta^3}{2C (d+1)}$, Algorithm \ref{Alg: exponentiated_sampling} satisfies:
\begin{equation*}
    \frac{1}{C}\mathbb{E}\left[  \sum_{l=1}^C   \ell(\mathbf{p}^t_l  )  \right]  -\min_{\mathbf{p} \in \beta + (1-2\beta)\Delta_2} \ell(\mathbf{p}) \leq \frac{\mathrm{Var}_{\mathrm{opt}}}{\beta^2 \sqrt{C}}+ \frac{1}{C}
\end{equation*}
\end{theorem}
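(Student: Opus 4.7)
The plan is to recognize Algorithm \ref{Alg: exponentiated_sampling} as stochastic exponentiated-gradient (EG / Hedge) descent on the 2-simplex, applied to a reparametrization of the loss $\ell$, and then apply a textbook regret bound. Set $\mathbf{q}^t_l = (q^t_l, 1-q^t_l)^\top \in \Delta_2$ and $\mathbf{p}^t_l = (1-2\beta)\mathbf{q}^t_l + \beta \mathbf{1}$, so $\mathbf{p}^t_l$ ranges over $\beta + (1-2\beta)\Delta_2$. The loss $\ell$ is a sum of convex $1/\mathbf{p}(i)$ terms, hence convex in $\mathbf{p}$ and in $\mathbf{q}$, and the chain rule gives $\nabla_{\mathbf{q}} \ell(\mathbf{p}(\mathbf{q})) = (1-2\beta)\,\nabla_{\mathbf{p}} \ell(\mathbf{p})$. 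The update in step 6 of the algorithm is exactly the EG update on $\mathbf{q}^t_l$ with stochastic gradient $\mathbf{e}_l$ and step size $\alpha$.

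The first technical step is to show that $\mathbf{e}_l$ is approximately unbiased for $\nabla_{\mathbf{q}}\ell$ at $\mathbf{q}^t_{l-1}$. Conditional on $\mathbf{q}^t_{l-1}$ and $a^t_l$, Assumption 2 gives $v_l = \nabla F(\theta)^\top(\mathbf{g}_l/\sigma) + O(\sigma^2 \tau \|\mathbf{g}_l\|_2^3)$. A direct Gaussian moment computation then yields $\mathbb{E}[v_l^2 \mid a^t_l = 1] = s_{\mathbf{U}^{\mathrm{act}}}$ and $\mathbb{E}[v_l^2 \mid a^t_l = 0] = s_{\mathbf{U}^\perp}$ up to a Taylor remainder; combining with $\mathbb{P}(a^t_l = 1) = p^t_l$ produces $\|\mathbb{E}[\mathbf{e}_l \mid \mathbf{q}^t_{l-1}] - \nabla_{\mathbf{q}}\ell(\mathbf{p}(\mathbf{q}^t_{l-1}))\|_\infty \leq \epsilon$, where the hypothesis $\sigma < \frac{1}{35}\sqrt{\epsilon \min(p^t,1-p^t)/(\tau d^3 \max(L,1))}$ is precisely what controls the Taylor error at the scale $\epsilon = \beta^3/(2C(d+1))$.

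Second, I would invoke the standard local-norm regret bound for EG on the simplex: for any $\mathbf{q}^\star \in \Delta_2$,
\begin{equation*}
\sum_{l=1}^{C} \langle \mathbf{e}_l, \mathbf{q}^t_l - \mathbf{q}^\star \rangle \leq \frac{\log 2}{\alpha} + \alpha \sum_{l=1}^{C} \sum_{i=1}^{2} \mathbf{q}^t_l(i)\, \mathbf{e}_l(i)^2.
\end{equation*}
Convexity of $\ell$ in $\mathbf{q}$ combined with Step 1 gives $\mathbb{E}[\ell(\mathbf{p}^t_l) - \ell(\mathbf{p}^\star)] \leq \mathbb{E}[\langle \mathbf{e}_l, \mathbf{q}^t_l - \mathbf{q}^\star \rangle] + \epsilon$, so choosing $\mathbf{p}^\star$ to be the minimizer of $\ell$ on $\beta + (1-2\beta)\Delta_2$ and summing produces
\begin{equation*}
\mathbb{E}\!\left[\sum_{l=1}^{C} \ell(\mathbf{p}^t_l)\right] - C \min_{\mathbf{p}} \ell(\mathbf{p}) \leq \frac{\log 2}{\alpha} + \alpha \sum_{l=1}^{C} \mathbb{E}\!\left[\sum_i \mathbf{q}^t_l(i)\,\mathbf{e}_l(i)^2\right] + C\epsilon.
\end{equation*}
Since only one coordinate of $\mathbf{e}_l$ is nonzero per round, and $\mathbf{q}^t_l(i) \leq \mathbf{p}^t_l(i)/(1-2\beta)$ with $\mathbf{p}^t_l(i) \geq \beta$, invoking Assumption 1 to bound $v_l^4$ pointwise controls the second-moment term by $O(\beta^{-4}[(d_{\mathrm{active}}+2)^2 s_{\mathbf{U}^{\mathrm{act}}}^2 + (d_\perp+2)^2 s_{\mathbf{U}^\perp}^2])$. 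Plugging in the prescribed $\alpha = 2\beta^2/\sqrt{C\,[(d_{\mathrm{active}}+2)^2 s_{\mathbf{U}^{\mathrm{act}}}^2 + (d_\perp+2)^2 s_{\mathbf{U}^\perp}^2]}$ balances the two sides; dividing by $C$, using Theorem \ref{theorem:combined_theorem_variance1} to identify the numerator with $\mathrm{Var}_{\mathrm{opt}}$, and $\epsilon \leq 1/C$, yields the claimed rate.

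The main obstacle will be the second-moment estimate: the raw estimator involves $v_l^4$, so a tight bound requires using Lipschitzness of $F$ (Assumption 1) to control $|v_l|$ pointwise and integrating over $\mathbf{g}_l$, rather than just relying on $\mathbb{E}[v_l^2]$. Simultaneously tracking how the chain-rule factor $(1-2\beta)$ and the floor $\mathbf{p}^t_l(i) \geq \beta$ propagate through the regret bound is the delicate bookkeeping that produces the final $1/\beta^2$ dependence in the rate.
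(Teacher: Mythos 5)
Your proposal is correct and follows essentially the same route as the paper: reparametrize $\mathbf{p} = (1-2\beta)\mathbf{q} + \beta$ onto the simplex, view Algorithm 2 as stochastic exponentiated-gradient/mirror descent with the entropic regularizer, control the bias of $\mathbf{e}_l$ via Assumption 2 and the $\sigma$ condition, bound the gradient second moment by $\beta^{-4}[(d_{\mathrm{active}}+2)^2 s_{\mathbf{U}^{\mathrm{act}}}^2 + (d_\perp+2)^2 s_{\mathbf{U}^\perp}^2]$, and combine convexity of $\ell$ with the prescribed $\alpha$ and $\epsilon$. The only differences are cosmetic (you use the local-norm form of the EG regret bound where the paper uses the $\ell_\infty$/strong-convexity form, and your per-round bias is really of order $\epsilon(d+2)/\beta^3$ rather than $\epsilon$, which with the prescribed $\epsilon$ still yields the $1/C$ term).
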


\section{Experiments}\label{sec:experiments_overleaf}

In our experiments we use different classes of high-dimensional blackbox functions: RL blackbox functions (where the input is a high-dimensional vector encoding a neural network policy
$\pi: \mathcal{S} \rightarrow \mathcal{A}$ mapping states $s$ to actions $a$ and the output is the cumulative reward obtained by an agent
applying this policy in a particular environment) and functions from the recently open-sourced $\mathrm{Nevergrad}$ library \citep{nevergrad}. In practice one can setup the hyperparameters used by Algorithm 2 as follows: $\sigma = 0.01, C=10, \alpha=0.01, \beta=0.1, q_{0}^{t}=0.1$. 
For each algorithm we used $k=5$ seeds and obtained curves are median-curves with inter-quartile ranges presented as shadowed regions.

\vspace{-3mm} 
\begin{figure}[H]
\begin{minipage}{0.99\textwidth}
	\subfigure{\includegraphics[keepaspectratio, width=0.99\textwidth]{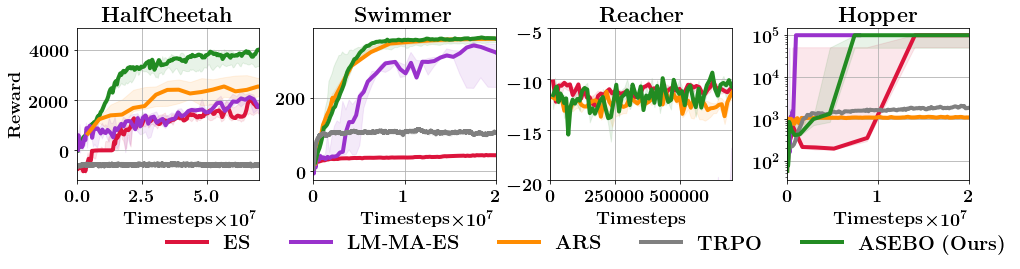}}
\end{minipage}
	\caption{Comparison of different blackbox optimization algorithms on $\mathrm{OpenAI}$ $\mathrm{Gym}$ tasks.
	 All curves are median-curves obtained from $k=5$ seeds and with inter-quartile ranges presented as shadowed regions. 
	 For Reacher we present only 3 curves since LM-MA-ES and TRPO did not learn.}
	\label{fig:openai_exps} 
\end{figure}
\vspace{-3mm} 
\subsection{RL blackbox functions}
We used the following environments from the $\mathrm{OpenAI}$ $\mathrm{Gym}$ library: 
$\mathrm{Swimmer}$-$\mathrm{v2}$, $\mathrm{HalfCheetah}$-$\mathrm{v2}$, $\mathrm{Walker2d}$-$\mathrm{v2}$, 
$\mathrm{Reacher}$-$\mathrm{v2}$, 
$\mathrm{Pusher}$-$\mathrm{v2}$ and
$\mathrm{Thrower}$-$\mathrm{v2}$. 
In all experiments we used policies encoded by neural network architectures of two hidden layers and with $\mathrm{tanh}$ nonlinearities, with $>100$ parameters. For gradient-based optimization we use $\mathrm{Adam}$.
For this class of blackbox functions we compared $\mathrm{ASEBO}$ with other generic blackbox methods as well as those specializing in optimizing
RL blackbox functions $F$, namely:
 \textbf{(1)} $\mathrm{CMA}$-$\mathrm{ES}$ variants; we compare against two recently introduced algorithms designed for high-dimensional settings (we use the implementation of $\mathrm{VkD}$-$\mathrm{CMA}$-$\mathrm{ES}$ in the 
$\mathrm{pycma}$ open-source implementation from $\mathrm{https://github.com/CMA}$-$\mathrm{ES/pycma}$), and that of $\mathrm{LM}$-$\mathrm{MA}$-$\mathrm{ES}$ from \cite{challenges_esrl}), \textbf{(2)} Augmented Random Search ($\mathrm{ARS}$) \citep{horia} (we use implementation released by the authors at $\mathrm{http://github.com/modestyachts/ARS}$), \textbf{(3)} Proximal Policy Optimization ($\mathrm{PPO}$) \citep{schulman2017proximal}  and Trust Region Policy Optimization ($\mathrm{TRPO}$) \citep{trpo} (we use $\mathrm{OpenAI}$ baseline implementation \citep{baselines}).
The results for four environments are on Fig. \ref{fig:openai_exps}.

\vspace{-3mm} 
\begin{table}[H]
  \caption{Median rewards obtained across $k=5$ seeds for seven RL environments. For each environment the top two performing algorithms are bolded, while the bottom two are shown in red.}
  \label{rl_results}
  \centering
  \begin{adjustbox}{width=\textwidth}

      \begin{tabular}{l*9{c}}
        \toprule
        \multicolumn{9}{c}{\textbf{Median reward after \# timesteps}}                   \\
        \cmidrule(r){3-9}
        \textbf{Environment}     & \textbf{Timesteps}     & $\mathrm{ASEBO}$ & $\mathrm{ES}$ & $\mathrm{ARS}$  & $\mathrm{VkD}$-$\mathrm{CMA}$ & $\mathrm{LM}$-$\mathrm{MA}$ & $\mathrm{TRPO}$ & $\mathrm{PPO}$ \\
        \midrule
        $\mathrm{HalfCheetah}$ & $5.10^7$ & $\mathbf{3821}$ & 1530 & $\mathbf{2420}$ & \textcolor{red}{-144} & 1632 & \textcolor{red}{-512} & 1514    \\
        
        $\mathrm{Swimmer}$ & $10^7$  & $\mathbf{358}$ & \textcolor{red}{36} & 348 &  $\mathbf{367}$ & 297 & 110 & \textcolor{red}{52}   \\
     
        $\mathrm{Walker2d}$ & $5.10^7$  & $\mathbf{9941}$  & \textcolor{red}{347} & 1112 & \textcolor{red}{1} & $\mathbf{18065}$ & 3011 & 2377   \\ 
         
        $\mathrm{Hopper}$ & $10^7$  & $\mathbf{99949}$  & \textcolor{red}{626} & 1091 & \textcolor{red}{42} & $\mathbf{100199}$ & 1663 & 1310   \\ 

        $\mathrm{Reacher}$ & $10^5$  & $\mathbf{-11}$ & $\mathbf{-10}$ & -12 &  \textcolor{red}{-1391} & -173 & -112 & \textcolor{red}{-196}   \\   
    
        $\mathrm{Pusher}$ & $10^5$  & $\mathbf{-46}$  & -48 & $\mathbf{-45}$ &  \textcolor{red}{-1001} & \textcolor{red}{-467} & -120 & -316   \\       
    
        $\mathrm{Thrower}$ & $10^5$  & $\mathbf{-89}$  & -96 & -90 &  \textcolor{red}{-796} & \textcolor{red}{-737} & $\mathbf{-85}$ & -175   \\  
        \bottomrule
      \end{tabular}
  \end{adjustbox}
\end{table}
\vspace{-4mm}

Sampling complexity is measured in the number of timesteps (environment transitions) used by the algorithms. 
$\mathrm{ASEBO}$ is the only algorithm that performs consistently across all seven environments (see: Table \ref{rl_results}),
outperforming $\mathrm{CMA}$-$\mathrm{ES}$ variants on all tasks aside from $\mathrm{VkD}$-$\mathrm{CMA}$-$\mathrm{ES}$ on $\mathrm{Swimmer}$
and $\mathrm{LM}$-$\mathrm{MA}$-$\mathrm{ES}$ on $\mathrm{Walker2d}$. For environments such as $\mathrm{Reacher}$, $\mathrm{Thrower}$ and $\mathrm{Pusher}$, these methods perform poorly, drastically underperforming even $\mathrm{Vanilla}$ $\mathrm{ES}$. On Fig. \ref{fig:lmmaes}, we demonstrate the common problem of state-of-the-art CMA-ES methods: if the number of samples $n$ is not carefully tuned, the algorithm does not learn. $\mathrm{ASEBO}$ does not have this problem since $n$ is learned on-the-fly.
\vspace{-2mm} 
\begin{figure}[H]
\begin{minipage}{0.99\textwidth}
	\subfigure{\includegraphics[keepaspectratio, width=0.99\textwidth]{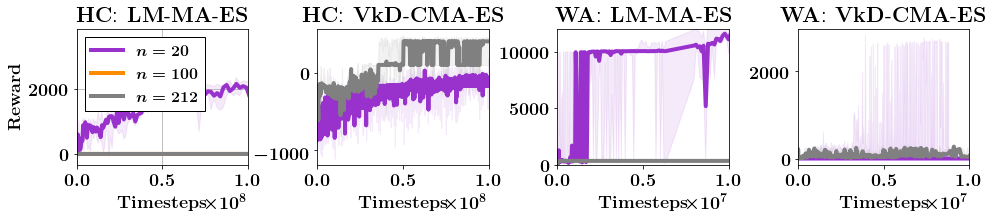}}	
\end{minipage}
	\caption{Sensitivity analysis for $\mathrm{CMA}$-$\mathrm{ES}$ variants on the $\mathrm{HalfCheetah}$ (HC) and $\mathrm{Walker2d}$ (WA) tasks. In each setting, we run $k=5$ seeds, solely changing the number of samples per iteration (or \textit{population size}) $n$.}
	\label{fig:lmmaes} 
\end{figure}
\vspace{-2mm} 

\vspace{-2mm} 
\subsection{$\mathrm{Nevergrad}$ blackbox functions}
We tested functions: $\mathrm{sphere}$, $\mathrm{rastrigin}$, $\mathrm{rosenbrock}$ and $\mathrm{lunacek}$ 
(from the class of Bi-Rastrigin/Lunacek's No.02 functions).
All tested functions are $1000$-dimensional.
The results are presented on Fig. \ref{fig:nevergrad_exps}.
$\mathrm{ASEBO}$ is the most reliable method across different functions.

\vspace{-3mm} 
\begin{figure}[H]
\begin{minipage}{0.99\textwidth}
	\subfigure{\includegraphics[keepaspectratio, width=0.99\textwidth]{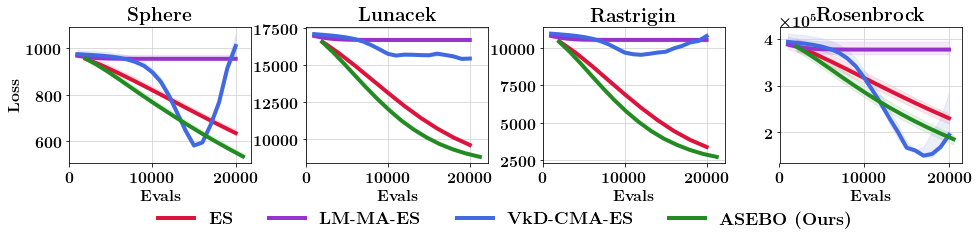}}
\end{minipage}
	\caption{Comparison of median-curves obtained from $k=5$ seeds for different algorithms on $\mathrm{Nevergrad}$ functions \citep{nevergrad}.
   Inter-quartile ranges are presented as shadowed regions.}
	\label{fig:nevergrad_exps} 
\end{figure}
\vspace{-3mm} 




\section{Conclusion}\label{sec:conclusions}

We proposed a new algorithm for optimizing high-dimensional blackbox functions.
$\mathrm{ASEBO}$ adjusts on-the-fly the strategy of choosing gradient sensing directions 
to the hardness of the problem at the current stage of optimization and can be applied for both RL and
non-RL problems. We provided theoretical guarantees for our method and exhaustive empirical validation.



\bibliographystyle{abbrv}
\bibliography{asebo}

\newpage
\onecolumn

\section*{APPENDIX: From Complexity to Simplicity: Adaptive ES-Active Subspaces for Blackbox Optimization}

\section{Theoretical Results}

Throughout this section we will assume the sensings directions $\{\mathbf{g}_i\}$ at time $t$ are sampled from one of the following families of distributions:
\begin{equation*}
    \widehat{P} = \begin{cases}
            \mathbf{g} \sim \mathcal{N}(0, \mathbf{I}_{\mathcal{L}^{\mathrm{ES}}_{\mathrm{active}}})& \text{ with probability } p^{t} \\
            \mathbf{g} \sim \mathcal{N}(0, \mathbf{I}_{\mathcal{L}^{\mathrm{ES, \perp}}_{\mathrm{active}}}) & \text{ with probability } 1-p^{t}
        \end{cases}
\end{equation*}

 Where $p^{t}$ is a probability parameter with values in $[0,1]$.

Denote an by $\mathbf{U}^{\mathrm{act}} \in \mathbb{R}^{d\times d_{\mathrm{active}}}$ an orthonormal basis of the active subspace $\mathcal{L}^{\mathrm{ES}}_{\mathrm{active}}$ and $\mathbf{U}^\perp \in \mathbb{R}^{d\times (d-d_{\mathrm{active}})}$ an orthonormal basis of $\mathcal{L}^{\mathrm{ES, \perp}}_{\mathrm{active}}$. 

Let's start by computing the covariance matrix of $\widehat{P}$:
\begin{equation*}
    \mathbb{E}_{\mathbf{g} \sim P_i } \left[   \mathbf{g}\mathbf{g}^\top         \right] = \underbrace{\left( p^{t} \mathbf{U}^{\mathrm{act}}(\mathbf{U}^{\mathrm{act}})^\top + (1-p^{t}) \mathbf{U}^{\perp}(\mathbf{U}^{\perp})^\top          \right)}_{\mathbf{C}_1}    
\end{equation*}

In order to simplify the notation of the proofs in this section we use the following conventions:

\begin{align*}
z_{ES} = \widehat{\nabla}^{\mathrm{AT},\mathrm{base}}_{\mathrm{MC},k=1}F_{\sigma}(\theta) \\
z_1 = \widehat{\nabla}^{\mathrm{AT},\mathrm{asebo}}_{\mathrm{MC},k=1}F_{\sigma}(\theta)
\end{align*}

Where $z_1$ is the ASEBO gradient estimator resulting form using sampling mechanism $\widehat{P}$. 

\paragraph{Notational simplification} To simplify notation we also use $\mathbf{U}$ instead of $\mathbf{U}^{\mathrm{act}}$, $\mathbf{I}_{\mathbf{U}}$ instead of $\mathbf{I}_{\mathcal{L}^{\mathrm{ES}}_{\mathrm{active}}}$ and $\mathbf{I}_{\mathbf{U}^\perp}$ instead of $\mathbf{I}_{\mathcal{L}^{\mathrm{ES}, \perp}_{\mathrm{active}}}$

Let $\epsilon >0$ be the precision parameter. We choose $\sigma$ with the goal of making the bias between the expectation of our gradient estimators and the true gradient of $F$ smaller than $\epsilon$. Throughout this section we assume $\sigma$ is small enough:

\begin{equation*}
    0< \sigma < \frac{1}{35}\sqrt{\frac{\epsilon \min(p^{t}, 1-p^{t})}{\tau  d^3 \max(L, 1)} }
\end{equation*}

\subsection{Gradient Estimators, their bias and their variance.}

In this section we aim to produce theoretical guarantees regarding the bias and variance of our proposed gradient estimators. We show that under the right assumptions, the isotropic and non isotropic versions of the evolution Strategies estimators have small bias, and 

We make the following assumptions on $F$:
\begin{itemize}
    \item[] \textbf{Assumption 1}. $F$ is $L-$Lipschitz. For all $\theta, \theta' \in \mathbb{R}^d$, $|F(\theta) - F(\theta')| \leq L \| \theta - \theta'\|$.
    \item[] \textbf{Assumption 2}. $F$ has a $\tau$-smooth third order derivative tensor, so that $F(\theta + \sigma \mathbf{g}) = F(\theta) + \sigma \nabla F(\theta)^\top \mathbf{g} + \frac{\sigma^2}{2}\mathbf{g}^\top H(\theta )\mathbf{g} + \frac{1}{6} \sigma^3 F'''(\theta)[v,v,v]$ with $v \in [0, \mathbf{g}]$ satisfying  $|F'''(\theta)[v,v,v] \leq \tau \|v \|^3 \leq \tau \| \mathbf{g}\|^3$.
\end{itemize}

Let $d_{\mathrm{active}}$ and $d_{\perp}$ denote the dimensionality of $\mathcal{L}_{(active)}$ and $\mathcal{L}_\perp$ respectively. 

Under these assumptions, $ \frac{F(\theta_t + \sigma \mathbf{g}) - F(\theta_t - \sigma \mathbf{g})}{2\sigma} = \left( \mathbf{g}^\top \nabla F(\theta_t ) \right)+  \xi_{\mathbf{g}}(\theta_t) $ such that $ \xi_\mathbf{g}(\theta_t) \leq \frac{\tau}{6} \sigma^2 \| \mathbf{g}\|^3$, uniformly over all $\theta_t$. We relax the constants slightly. If $F$'s third order derivative tensor is smooth with constant $\tau$:

\begin{equation*}
    \left|\frac{F(\theta_t + \sigma \mathbf{g}) - F(\theta_t - \sigma \mathbf{g})}{2\sigma} - \mathbf{g}^\top \nabla F(\theta_t) \right| \leq \tau\sigma^2 \|\mathbf{g}\|^3.
\end{equation*}

Recall the following definitions:

\begin{itemize}
    \item \textbf{Evolution Strategies Gradient}. Let $\mathbf{g} \sim \mathcal{N}(0, \mathbf{I})$. The ES gradient is defined as $z_{ES} = \frac{F(\theta_t + \sigma \mathbf{g}) -F(\theta_t - \sigma \mathbf{g}) }{2\sigma}\mathbf{g}$.
    \item \textbf{ $\widehat{P}$ Nonisotropic Gradient.}. Let $\mathbf{g} \sim \widehat{P}$. The $\widehat{P}$ gradient is defined as $z_1 = \mathbf{C}_1^{-1}\frac{F(\theta_t + \sigma \mathbf{g}) -F(\theta_t - \sigma \mathbf{g}) }{2\sigma}\mathbf{g}$.
\end{itemize}

The following inequalitites hold:

\begin{align*}
    \| \xi_\mathbf{g}(\theta_t) \mathbf{g} \|^2 &\leq \frac{\tau}{6} \sigma^2 \| \mathbf{g} \|^4 \\
    \|\mathbb{E}_{\mathbf{g} \sim \mathcal{N}(0, \mathbf{I})} \left[   \xi_\mathbf{g}(\theta_t) \mathbf{g}  \right] \|^2 &\leq \frac{\sigma^4 \tau^2 }{36} \left(   \mathbb{E}_{\mathbf{g} \sim \mathcal{N}(0, \mathbf{I})}\left[ \|\mathbf{g}\|^4   \right] \right)^2\leq \frac{\sigma^4 \tau^2 d^4 }{4} \\
     \|\mathbb{E}_{\mathbf{g} \sim \mathcal{N}(0, \mathbf{I}_{\mathbf{U}})} \left[   \xi_\mathbf{g}(\theta_t) \mathbf{g}  \right] \|^2 &\leq \frac{\sigma^4 \tau^2 }{36} \left(\mathbb{E}_{\mathbf{g} \sim \mathcal{N}(0, \mathbf{I}_{\mathbf{U}^\perp})} \left[ \|\mathbf{g}\|^4   \right] \right)^2 \leq \frac{\sigma^4 \tau^2 d_{\mathrm{active}}^4 }{4} \\
          \|\mathbb{E}_{\mathbf{g} \sim \mathcal{N}(0, \mathbf{I}_\mathbf{U})} \left[   \xi_\mathbf{g}(\theta_t) \mathbf{g}  \right] \|^2 &\leq \frac{\sigma^4 \tau^2 }{36} \left(\mathbb{E}_{\mathbf{g} \sim \mathcal{N}(0, \mathbf{I}_{\mathbf{U}^\perp}) } \left[ \|\mathbf{g}\|^4   \right] \right)^2\leq \frac{\sigma^4 \tau^2 d_{\perp}^4 }{4}
\end{align*}


\textbf{Bounding the Bias}
The first result in this section is to show that under the right conditions the ES gradient estimators in both the isotropic and non isotropic cases can be close to the true gradient provided the function satisfies Assumptions 1 and 2. Theorem \ref{theorem::es_gradient_bias} deals with the isotropic case and Theorem \ref{theorem::es_gradient_bias_non_isotropic} with the non isotropic case. The combination of these results yields the proof of Lemma \ref{lemma::gradients_approximation} in the main text.

\begin{theorem}\label{theorem::es_gradient_bias}
The evolution strategies gradient estimator $z_{ES}$ satisfies:

\begin{equation}\label{eq::es_gradient_bias}
    \left \| \mathbb{E}_{\mathbf{g} \sim \mathcal{N}(0, \mathbf{I})}\left[ z_{ES}\right] - \nabla F(\theta_t) \right \| \leq3 \tau\sigma^2d^2
\end{equation}

If $\sigma < \frac{1}{35}\sqrt{\frac{\epsilon \min(p^{t}, 1-p^{t})}{\tau  d^3 \max(L, 1)} }$:
\begin{equation}
    \left \| \mathbb{E}_{\mathbf{g} \sim \mathcal{N}(0, \mathbf{I})}\left[ z_{ES}\right] - \nabla F(\theta_t) \right \| \leq \epsilon
\end{equation}

\end{theorem}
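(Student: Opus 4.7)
The plan is to exploit the antithetic construction of $z_{ES}$, which kills the even-order terms of the Taylor expansion of $F$, and then to reduce the bias of the odd remainder against $\mathbf{g}$ to a fourth Gaussian moment. Concretely, I would first apply Assumption~2 at $\theta_t \pm \sigma \mathbf{g}$: the constant and Hessian terms appear with identical sign and cancel when we form $F(\theta_t + \sigma \mathbf{g}) - F(\theta_t - \sigma \mathbf{g})$, the linear term doubles, and only the third-order cubic remainders survive. After dividing by $2\sigma$ this gives the pointwise identity
\begin{equation*}
\frac{F(\theta_t + \sigma \mathbf{g}) - F(\theta_t - \sigma \mathbf{g})}{2\sigma} = \mathbf{g}^{\top} \nabla F(\theta_t) + \xi_{\mathbf{g}}(\theta_t),
\end{equation*}
with the uniform bound $|\xi_{\mathbf{g}}(\theta_t)| \leq \tau \sigma^{2} \|\mathbf{g}\|^{3}$ that the preamble already records.

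Next I would multiply this identity by $\mathbf{g}$ and take the expectation under $\mathbf{g} \sim \mathcal{N}(0, \mathbf{I}_d)$. Since $\mathbb{E}[\mathbf{g}\mathbf{g}^{\top}] = \mathbf{I}_d$, the main term becomes exactly $\nabla F(\theta_t)$, so the bias reduces to $\mathbb{E}[\xi_{\mathbf{g}}(\theta_t)\, \mathbf{g}]$. By Jensen's inequality and the pointwise bound above,
\begin{equation*}
\bigl\| \mathbb{E}[\xi_{\mathbf{g}}(\theta_t)\, \mathbf{g}] \bigr\| \leq \mathbb{E}\bigl[ |\xi_{\mathbf{g}}(\theta_t)|\, \|\mathbf{g}\| \bigr] \leq \tau \sigma^{2}\, \mathbb{E}\bigl[ \|\mathbf{g}\|^{4} \bigr].
\end{equation*}
The main (small) obstacle is the fourth moment of a standard Gaussian norm, but $\|\mathbf{g}\|^{2}$ is $\chi^{2}_d$, whence $\mathbb{E}[\|\mathbf{g}\|^{4}] = d(d+2) \leq 3 d^{2}$ for $d \geq 1$. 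Substituting yields the advertised bound $\|\mathbb{E}[z_{ES}] - \nabla F(\theta_t)\| \leq 3 \tau \sigma^{2} d^{2}$.

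For the second inequality I would just plug the hypothesis $\sigma < \frac{1}{35}\sqrt{\frac{\epsilon \min(p^{t}, 1-p^{t})}{\tau d^{3} \max(L,1)}}$ into $3\tau \sigma^{2} d^{2}$; because $\min(p^{t},1-p^{t}) \leq 1/2$, $\max(L,1) \geq 1$, and $35^{2} = 1225$, the quantity $3\tau \sigma^{2} d^{2}$ is bounded by $\tfrac{3}{2 \cdot 1225\, d} \epsilon \leq \epsilon$, which closes the statement. No step here is really subtle; the only point that needs care is keeping track of which Taylor terms survive the antithetic subtraction and invoking the third-order remainder bound from Assumption~2 uniformly in $\mathbf{g}$, so that the expectation against the Gaussian is justified.
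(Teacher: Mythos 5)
Your proposal is correct and follows essentially the same route as the paper: antithetic cancellation of even Taylor terms, the pointwise remainder bound $|\xi_{\mathbf{g}}(\theta_t)|\leq \tau\sigma^2\|\mathbf{g}\|^3$, and the estimate $\mathbb{E}\|\mathbf{g}\|^4\leq 3d^2$ (you use the exact $\chi^2_d$ moment $d(d+2)$ where the paper uses $\|\mathbf{g}\|^4\leq d\sum_i \mathbf{g}(i)^4$, an immaterial difference). Your explicit verification of the $\epsilon$ bound under the stated smallness condition on $\sigma$ is also correct.
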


\begin{proof}
Notice that $\| \mathbf{g} \|^4 = (\sum_{i=1}^d \mathbf{g}(i)^2 )^2 \leq d\sum_{i=1}^d \mathbf{g}(i)^4 $. Where we denote $\mathbf{g}(i)$ as the $i-$th entry of the $d-$dimensional vector $\mathbf{g}\in \mathbb{R}^d$. Since $\mathbb{E}[\mathbf{g}(i)^4] = 3$ for all $i$:
\begin{equation*}
    \mathbb{E}_{\mathbf{g} \sim \mathcal{N}(0, \mathbf{I})} \left[   \| \mathbf{g}\|^4 \right] \leq 3 d^2
\end{equation*}
And therefore:
\begin{equation*}
       \left \| \mathbb{E}_{\mathbf{g} \sim \mathcal{N}(0, \mathbf{I})}\left[  \frac{F(\theta_t + \sigma \mathbf{g}) - F(\theta_t - \sigma \mathbf{g}) }{ 2\sigma}    \mathbf{g}\right] - \nabla F(\theta_t)  \right\| \leq \tau\sigma^2 \mathbb{E}_{\mathbf{g} \sim \mathcal{N}(0, \mathbf{I})} \left[    \| \mathbf{g}\|^4 \right] \leq 3\tau\sigma^2 d^2
\end{equation*}
\end{proof}

A similar result holds for the $z_1$ gradient.

\begin{theorem}\label{theorem::es_gradient_bias_non_isotropic}
The non isotropic $\widehat{P}$ gradient estimator satisfies:
\begin{equation*}
    \| \mathbb{E}_{\mathbf{g} \sim \widehat{P}} \left[    z_1\right] - \nabla F(\theta_t) \| \leq \frac{3\sigma^2 \tau }{p^{t}}  d_{\mathrm{active}}^2 + \frac{3\sigma^2 \tau}{1-p^{t}}d_{\perp}^2
\end{equation*}

If $\sigma < \frac{1}{35}\sqrt{\frac{\epsilon \min(p^{t}, 1-p^{t})}{\tau  d^3 \max(L, 1)} }$:

\begin{equation*}
    \| \mathbb{E}_{\mathbf{g} \sim \widehat{P}} \left[    z_1\right] - \nabla F(\theta_t) \| \leq \epsilon
\end{equation*}

\end{theorem}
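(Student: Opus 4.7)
My plan is to first reduce the claim to a Taylor-expansion bias computation on top of the exact unbiasedness of a certain first-moment identity, and then bound the residual by exploiting the block-diagonal structure of $\mathbf{C}_1$ on $\mathcal{L}^{\mathrm{ES}}_{\mathrm{active}} \oplus \mathcal{L}^{\mathrm{ES},\perp}_{\mathrm{active}}$. Using Assumption 2 I would write
\[
\frac{F(\theta_t+\sigma\mathbf{g})-F(\theta_t-\sigma\mathbf{g})}{2\sigma} \;=\; \mathbf{g}^\top \nabla F(\theta_t) + \xi_{\mathbf{g}}(\theta_t), \qquad |\xi_{\mathbf{g}}(\theta_t)| \le \tau\sigma^2\|\mathbf{g}\|^3,
\]
exactly as recorded right above the statement. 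Plugging into the definition of $z_1$ and taking expectations under $\widehat{P}$,
\[
\mathbb{E}_{\mathbf{g}\sim\widehat{P}}[z_1] \;=\; \mathbf{C}_1^{-1}\,\mathbb{E}_{\mathbf{g}\sim\widehat{P}}[\mathbf{g}\mathbf{g}^\top]\,\nabla F(\theta_t) + \mathbf{C}_1^{-1}\,\mathbb{E}_{\mathbf{g}\sim\widehat{P}}[\xi_{\mathbf{g}}(\theta_t)\mathbf{g}].
\]
Since the covariance of $\widehat{P}$ is exactly $\mathbf{C}_1$ (computed in the preliminaries of the appendix), the first summand collapses to $\nabla F(\theta_t)$, and the whole proof reduces to controlling the second summand.

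\paragraph{Controlling the residual.} For the residual I would split the mixture explicitly:
\[
\mathbb{E}_{\mathbf{g}\sim\widehat{P}}[\xi_{\mathbf{g}}(\theta_t)\mathbf{g}] \;=\; p^{t}\,\mathbb{E}_{\mathbf{g}\sim\mathcal{N}(0,\mathbf{I}_{\mathbf{U}})}[\xi_{\mathbf{g}}(\theta_t)\mathbf{g}] \;+\; (1-p^{t})\,\mathbb{E}_{\mathbf{g}\sim\mathcal{N}(0,\mathbf{I}_{\mathbf{U}^\perp})}[\xi_{\mathbf{g}}(\theta_t)\mathbf{g}].
\]
The key structural observation is that the first expectation is supported in $\mathcal{L}^{\mathrm{ES}}_{\mathrm{active}}$ and the second in $\mathcal{L}^{\mathrm{ES},\perp}_{\mathrm{active}}$, and that $\mathbf{C}_1^{-1}$ acts as the scalar $1/p^{t}$ on the former subspace and $1/(1-p^{t})$ on the latter. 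Hence, applying $\mathbf{C}_1^{-1}$ scales each summand by at most $1/p^{t}$ and $1/(1-p^{t})$ respectively, and by the triangle inequality and the pre-computed displays in the excerpt,
\[
\bigl\|\mathbb{E}_{\mathbf{g}\sim\mathcal{N}(0,\mathbf{I}_{\mathbf{U}})}[\xi_{\mathbf{g}}(\theta_t)\mathbf{g}]\bigr\| \;\le\; \tau\sigma^2\,\mathbb{E}_{\mathbf{g}\sim\mathcal{N}(0,\mathbf{I}_{\mathbf{U}})}[\|\mathbf{g}\|^4] \;\le\; 3\tau\sigma^2 d_{\mathrm{active}}^2,
\]
and analogously $\le 3\tau\sigma^2 d_{\perp}^2$ for the perpendicular part, where I use $\mathbb{E}[\|\mathbf{g}\|^4]\le 3k^2$ for an isotropic Gaussian in $k$ dimensions (the same moment computation as in the proof of Theorem \ref{theorem::es_gradient_bias}). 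Combining gives the first displayed inequality of the statement.

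\paragraph{Turning the bound into $\epsilon$.} For the second displayed inequality I would simply substitute the upper bound on $\sigma$. Using $d_{\mathrm{active}}^2 + d_{\perp}^2 \le d^2$ and $1/p^{t} + 1/(1-p^{t}) \le 2/\min(p^{t},1-p^{t})$, the right-hand side is at most
\[
\frac{3\tau\sigma^2 d^2}{\min(p^{t},1-p^{t})} \;\le\; \frac{3\tau d^2}{\min(p^{t},1-p^{t})} \cdot \frac{\epsilon\,\min(p^{t},1-p^{t})}{35^2\,\tau\,d^3\,\max(L,1)} \;=\; \frac{3\epsilon}{35^2\,d\,\max(L,1)} \;\le\; \epsilon,
\]
which finishes the claim. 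The main obstacle here is really only bookkeeping: one has to be careful to note that the two summands inside $\mathbf{C}_1^{-1}\mathbb{E}_{\widehat{P}}[\xi_{\mathbf{g}}\mathbf{g}]$ already lie in the invariant subspaces of $\mathbf{C}_1$, so that $\mathbf{C}_1^{-1}$ acts diagonally and the factors $1/p^{t}$, $1/(1-p^{t})$ appear cleanly rather than through the worst-case operator norm; the rest is the same moment computation used for the isotropic baseline.
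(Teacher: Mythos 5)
Your proposal is correct and follows essentially the same route as the paper's proof: Taylor-expand via Assumption 2, use $\mathbb{E}_{\mathbf{g}\sim\widehat{P}}[\mathbf{g}\mathbf{g}^\top]=\mathbf{C}_1$ to recover $\nabla F(\theta_t)$ exactly, split the residual along the two invariant subspaces of $\mathbf{C}_1$, and bound each piece with the fourth-moment estimate $\mathbb{E}\|\mathbf{g}\|^4\le 3k^2$. If anything, your bookkeeping is slightly sharper than the paper's (keeping the mixture weights $p^{t}$, $1-p^{t}$ makes them cancel against $\mathbf{C}_1^{-1}$, yielding a bound without the $1/p^{t}$, $1/(1-p^{t})$ factors, which still implies the stated inequality), and your explicit substitution of the $\sigma$ condition spells out a step the paper leaves implicit.
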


\begin{proof}
Expanding $\mathbb{E}_{\mathbf{g} \sim \widehat{P}} \left[  z_1 \right]$ yields:
\begin{align*}
    \mathbb{E}_{\mathbf{g} \sim \widehat{P}} \left[  z_1 \right] &= \mathbf{C}_1^{-1}\mathbb{E}_{\mathbf{g} \sim \widehat{P}} \left[  \frac{F(\theta_t + \sigma \mathbf{g}) - F(\theta_t - \sigma \mathbf{g})}{2\sigma} \mathbf{g}     \right] \\
    &= \mathbf{C}_1^{-1}\mathbb{E}_{\mathbf{g} \sim \widehat{P}} \left[    \mathbf{g}\mathbf{g}^\top \nabla F(\theta_t) + \xi_\mathbf{g}(\theta_t)\mathbf{g}        \right]\\
    &= \nabla F(\theta_t) + \frac{1}{p^{t}} \mathbb{E}_{\mathbf{g} \sim \mathcal{N}(0, \mathbf{I}_\mathbf{U})} \left[  \xi_\mathbf{g}(\theta_t) \mathbf{g}   \right] + \frac{1}{1-p^{t}} \mathbb{E}_{\mathbf{g} \sim \mathcal{N}(0, \mathbf{I}_{\mathbf{U}^\perp})} \left[  \xi_\mathbf{g}(\theta_t) \mathbf{g}   \right]
\end{align*}

By a similar argument as in the proof of Theorem \ref{theorem::es_gradient_bias}:

\begin{align*}
\| \mathbb{E}_{\mathbf{g} \sim \mathcal{N}(0, \mathbf{I}_\mathbf{U})} \left[  \xi_\mathbf{g}(\theta_t) \mathbf{g}   \right] \| \leq 3\tau \sigma^2 d_{\mathrm{active}}^2 \\
\| \mathbb{E}_{\mathbf{g} \sim \mathcal{N}(0, \mathbf{I}_{\mathbf{U}^\perp})} \left[  \xi_\mathbf{g}(\theta_t) \mathbf{g}   \right] \| \leq 3\tau \sigma^2 d_{\perp}^2
\end{align*}
The result follows.
\end{proof}

\paragraph{Towards bounding the variance} We start by showing how under the right assumptions the expected squared norm of the ES gradients are also bounded away from the squared norms of the true gradients. The distance between the square norms of the expectation of the ES gradient and the true gradient of $F$ are also bounded. Theorem \ref{theorem::expected_squared_norm_bound_ES} deals with the isotropic ES estimator and Theorem \ref{theorem::bias_bound_z1} with its non isotropic counterpart:

\begin{theorem}\label{theorem::expected_squared_norm_bound_ES}
If $F$ satisfies Assumption 1 and 2:
\begin{equation}\label{eq::bound_squared_expectation_squared_gradient}
    \left| \left\| \mathbb{E}_{\mathbf{g} \sim \mathcal{N}(0, \mathbf{I})} \left[     z_{ES} \right]\right \|^2 - \| \nabla F(\theta_t) \|^2\right| \leq 105\tau^2\sigma^4 d^4 + 6\tau\sigma^2 L  d^2
\end{equation}
If $\sigma < \frac{1}{35}\sqrt{\frac{\epsilon \min(p^{t}, 1-p^{t})}{\tau  d^3 \max(L, 1)} }$:

\begin{equation}
    \left| \left\| \mathbb{E}_{\mathbf{g} \sim \mathcal{N}(0, \mathbf{I})} \left[     z_{ES} \right]\right \|^2 - \| \nabla F(\theta_t) \|^2\right| \leq \epsilon
\end{equation}

\end{theorem}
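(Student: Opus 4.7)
}
The plan is to reduce the claim about squared norms to the already-proved bias bound from Theorem \ref{theorem::es_gradient_bias}, using the standard algebraic identity
$\|a\|^2 - \|b\|^2 = \langle a - b,\, a + b\rangle$ together with Cauchy--Schwarz. First I set $a := \mathbb{E}_{\mathbf{g} \sim \mathcal{N}(0,\mathbf{I})}[z_{ES}]$ and $b := \nabla F(\theta_t)$. From Theorem \ref{theorem::es_gradient_bias} we already have $\|a - b\| \leq 3\tau\sigma^2 d^2$, so the whole task is to bound $\|a + b\|$ by something of the form ``bias + $2L$'' and then multiply.

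The key ingredient for controlling $\|a+b\|$ is Assumption 1: since $F$ is $L$-Lipschitz, standard smoothing arguments yield $\|\nabla F(\theta_t)\| \leq L$ (and hence $\|b\| \leq L$). Combining with the triangle inequality $\|a\| \leq \|a-b\| + \|b\| \leq 3\tau\sigma^2 d^2 + L$, I obtain
\begin{equation*}
\|a\| + \|b\| \;\leq\; 3\tau\sigma^2 d^2 + 2L.
\end{equation*}
Plugging into the identity,
\begin{equation*}
\bigl|\|a\|^2 - \|b\|^2\bigr| \;\leq\; \|a-b\|\cdot(\|a\|+\|b\|) \;\leq\; 3\tau\sigma^2 d^2\,\bigl(3\tau\sigma^2 d^2 + 2L\bigr),
\end{equation*}
which expands into a term of order $\tau^2\sigma^4 d^4$ and a term of order $\tau\sigma^2 L d^2$, exactly the shape of the claimed RHS; the explicit numerical constants ($9$ versus the stated $105$, and the factor $6$ in front of the cross term) then follow by keeping slightly looser intermediate estimates (for instance using a cruder bound on $\mathbb{E}\|\mathbf{g}\|^4$ or absorbing lower-order correction terms from a direct expansion of $F(\theta_t \pm \sigma\mathbf{g})$ via Assumption 2). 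The second inequality of the theorem is then a direct substitution: under $\sigma < \frac{1}{35}\sqrt{\epsilon\min(p^t,1-p^t)/(\tau d^3\max(L,1))}$, each of the two summands is bounded separately by a constant multiple of $\epsilon/d$, so the sum is at most $\epsilon$.

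The proof is essentially mechanical once the Lipschitz bound $\|\nabla F(\theta_t)\| \leq L$ is in place; the only mildly delicate step is justifying that bound without additional assumptions. I would obtain it from Assumption 1 by applying the Lipschitz inequality to directional increments $F(\theta_t + h\mathbf{u}) - F(\theta_t)$ along unit vectors $\mathbf{u}$, dividing by $h$, and passing to the limit (or, if $F$ is not differentiable at $\theta_t$, by using the same bound for $\nabla F_{\sigma}$ via dominated convergence). The rest is bookkeeping of constants to reach $105$ and $6$; I do not expect any substantive obstacle beyond tracking those constants carefully when invoking the $\tau$-smoothness of the third-order derivative tensor from Assumption 2.
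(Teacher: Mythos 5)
Your proposal is correct and follows essentially the same route as the paper: the paper also writes $\mathbb{E}[z_{ES}] = \nabla F(\theta_t) + e$ with $\|e\| \leq \tau\sigma^2\,\mathbb{E}\|\mathbf{g}\|^4 \leq 3\tau\sigma^2 d^2$ and bounds $\bigl|\|\mathbb{E}[z_{ES}]\|^2 - \|\nabla F(\theta_t)\|^2\bigr| \leq \|e\|^2 + 2\|\nabla F(\theta_t)\|\,\|e\|$ using $\|\nabla F(\theta_t)\|\leq L$, which is algebraically identical to your $\langle a-b, a+b\rangle$ decomposition. Note only that the constant $105$ in the statement is simply slack (your computation yields $9\tau^2\sigma^4 d^4$, which already implies the claimed bound), so no loosening of intermediate estimates is actually needed.
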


\begin{proof}
\begin{align*}
\left| \left\| \mathbb{E}_{\mathbf{g} \sim \mathcal{N}(0, \mathbf{I})} \left[     \frac{F(\theta_t + \sigma \mathbf{g}) - F(\theta_t - \sigma \mathbf{g})}{2\sigma} \mathbf{g}  \right]\right \|^2 - \| \nabla F(\theta_t) \|^2\right|  &\leq \tau^2 \left( \sigma^2 \mathbb{E}_{\mathbf{g} \sim \mathcal{N}(0, \mathbf{I})} \left[ \|\mathbf{g} \|^4   \right]\right)^2 + \\
&2 \tau \sigma^2 \| \nabla F(\theta_t) \| \| \mathbb{E}_{\mathbf{g} \sim  \mathcal{N}(0, \mathbf{I})} \left[    \|\mathbf{g} \|^4 \right] \\
&\leq 105\tau^2\sigma^4 d^4 + 6\tau\sigma^2 L  d^2
\end{align*}
\end{proof}

\begin{theorem}\label{theorem::bias_bound_z1}
If $F$ satisfies Assumption 1 and 2:
\begin{align*}
    \left| \left\| \mathbb{E}_{\mathbf{g} \sim \widehat{P}} \left[     z_{1} \right]\right \|^2 - \| \nabla F(\theta_t) \|^2\right| &\leq \frac{1}{(p^{t})^2} \frac{\sigma^4 \tau^2 d_{\mathrm{active}}^4}{4} + \frac{1}{(1-p^{t})^2} \frac{\sigma^4 \tau^2 d_{\perp}^4}{4} + \frac{2}{p^{t}} L \frac{\sigma^2 \tau d_{\mathrm{active}}^2}{4} + \\
    &\frac{2}{1-p^{t}}L \frac{\sigma^2\tau d_{\perp}^2}{4} + \frac{2}{p^{t}(1-p^{t})} \frac{\sigma^4 \tau^2 d_{\mathrm{active}}^2 d_{\perp}^2}{16}
\end{align*}

If $\sigma < \frac{1}{35}\sqrt{\frac{\epsilon \min(p^{t}, 1-p^{t})}{\tau  d^3 \max(L, 1)} }$:

\begin{equation*}
    \left| \left\| \mathbb{E}_{\mathbf{g} \sim \widehat{P}} \left[     z_{1} \right]\right \|^2 - \| \nabla F(\theta_t) \|^2\right| \leq \epsilon
\end{equation*}

\end{theorem}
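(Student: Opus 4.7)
The plan is to reduce the statement to the residual-decomposition already obtained in the proof of Theorem~\ref{theorem::es_gradient_bias_non_isotropic}. There it was shown that
\[
\mathbb{E}_{\mathbf{g}\sim\widehat{P}}[z_1] = \nabla F(\theta_t) + R, \qquad R := \tfrac{1}{p^{t}} E_{\mathbf{U}} + \tfrac{1}{1-p^{t}} E_{\mathbf{U}^\perp},
\]
where $E_{\mathbf{U}} := \mathbb{E}_{\mathbf{g}\sim\mathcal{N}(0,\mathbf{I}_{\mathbf{U}})}[\xi_{\mathbf{g}}(\theta_t)\mathbf{g}]$ and $E_{\mathbf{U}^\perp} := \mathbb{E}_{\mathbf{g}\sim\mathcal{N}(0,\mathbf{I}_{\mathbf{U}^\perp})}[\xi_{\mathbf{g}}(\theta_t)\mathbf{g}]$. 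Using the identity $\|a+b\|^2 - \|a\|^2 = \|b\|^2 + 2 a^\top b$ with $a=\nabla F(\theta_t)$ and $b=R$, I would write
\[
\bigl|\|\mathbb{E}[z_1]\|^2 - \|\nabla F(\theta_t)\|^2\bigr|
\leq \|R\|^2 + 2\,|\nabla F(\theta_t)^\top R|,
\]
so the theorem reduces to bounding these two quantities term by term.

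For $\|R\|^2$ I would expand the square to obtain three contributions: $\tfrac{1}{(p^t)^2}\|E_{\mathbf{U}}\|^2$, $\tfrac{1}{(1-p^t)^2}\|E_{\mathbf{U}^\perp}\|^2$, and the cross term $\tfrac{2}{p^t(1-p^t)} E_{\mathbf{U}}^\top E_{\mathbf{U}^\perp}$, which I would bound by Cauchy--Schwarz by $\tfrac{2}{p^t(1-p^t)}\|E_{\mathbf{U}}\|\,\|E_{\mathbf{U}^\perp}\|$. Each factor $\|E_{\mathbf{U}}\|$ and $\|E_{\mathbf{U}^\perp}\|$ is already bounded in the preliminary inequalities of this appendix: $\|E_{\mathbf{U}}\|^2 \leq \tfrac{\sigma^4\tau^2 d_{\mathrm{active}}^4}{4}$ and analogously for the orthogonal complement. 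For the linear term I would invoke Cauchy--Schwarz together with Assumption~1 (so that $\|\nabla F(\theta_t)\|\leq L$), giving $2|\nabla F(\theta_t)^\top R| \leq \tfrac{2L}{p^t}\|E_{\mathbf{U}}\| + \tfrac{2L}{1-p^t}\|E_{\mathbf{U}^\perp}\|$. Substituting the norm bounds yields exactly the five-term expression in the theorem statement.

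For the second inequality, I would then plug the stated threshold $\sigma < \tfrac{1}{35}\sqrt{\tfrac{\epsilon \min(p^{t},1-p^{t})}{\tau d^3\max(L,1)}}$ into each of the five terms. Observe that $\sigma^2$ carries a factor at most $\tfrac{\epsilon \min(p^t,1-p^t)}{35^2 \tau d^3 \max(L,1)}$, so the two linear-in-$L$ terms are each bounded by a small constant multiple of $\epsilon$ (the $\min(p^t,1-p^t)$ cancels the $1/p^t$ or $1/(1-p^t)$ prefactor), while the three quartic-in-$\sigma$ terms are of lower order and also absorbed into $\epsilon$ using $d_{\mathrm{active}}, d_\perp \leq d$. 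Combining all five contributions and slightly loosening constants delivers the desired $\epsilon$ bound.

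The proof is essentially a careful bookkeeping exercise: the only mild subtlety is the cross term $E_{\mathbf{U}}^\top E_{\mathbf{U}^\perp}$, which a priori vanishes since the two vectors lie in orthogonal subspaces, but I would keep the conservative Cauchy--Schwarz bound to match the form of the statement. The dominant obstacle (if any) is simply ensuring that after substituting the $\sigma$ threshold, each of the five terms individually yields at most a constant fraction of $\epsilon$ so their sum still fits under $\epsilon$ after the final rescaling.
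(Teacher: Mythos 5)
Your proposal follows essentially the same route as the paper's own proof: expand $\|\mathbb{E}_{\mathbf{g}\sim\widehat{P}}[z_1]\|^2 = \|\nabla F(\theta_t) + \tfrac{1}{p^t}E_{\mathbf{U}} + \tfrac{1}{1-p^t}E_{\mathbf{U}^\perp}\|^2$, bound each cross term via Cauchy--Schwarz together with the Lipschitz bound $\|\nabla F(\theta_t)\|\leq L$ and the previously established bounds on $\|E_{\mathbf{U}}\|$, $\|E_{\mathbf{U}^\perp}\|$, and then absorb everything into $\epsilon$ using the stated $\sigma$ threshold. The only caveat is cosmetic: plugging the appendix bounds $\|E_{\mathbf{U}}\|\leq \tfrac{\sigma^2\tau d_{\mathrm{active}}^2}{2}$ into your five terms gives constants off by small factors from those printed in the statement (a looseness already present in the paper's own write-up), and this does not affect the final $\epsilon$ conclusion.
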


\begin{proof}

Consider the following expansion of $\mathbb{E}\left[    z_1 \right] $.

\begin{align*}
    \| \mathbb{E}_{\mathbf{g} \sim   \widehat{P} } \left[   z_1  \right] \|^2 &= \| \nabla F(\theta_t) \|^2  + \frac{1}{(p^{t})^2} \| \mathbb{E}_{\mathbf{g} \sim \mathcal{N}(0, \mathbf{I}_\mathbf{U})}\left[ \xi_\mathbf{g}(\theta_t)\mathbf{g}   \right] \|^2 + \left( \frac{1}{1-p^{t}}\right)^2 \| \mathbb{E}_{\mathbf{g} \sim \mathcal{N}(0, \mathbf{I}_{\mathbf{U}^\perp}) } \left[  \xi_\mathbf{g}(\theta_t)\mathbf{g}    \right] + \\
    &\frac{2}{p^{t}} \langle \nabla F(\theta_t) , \mathbb{E}_{\mathbf{g} \sim \mathcal{N}(0, \mathbf{I}_\mathbf{U})}\left[ \xi_\mathbf{g}(\theta_t)\mathbf{g}   \right] \rangle + \frac{2}{1-p^{t}} \langle \nabla F(\theta_t) , \mathbb{E}_{\mathbf{g} \sim \mathcal{N}(0, \mathbf{I}_{\mathbf{U}^\perp})}\left[ \xi_\mathbf{g}(\theta_t)\mathbf{g}   \right] \rangle + \\
    &\frac{2}{p^{t}(1-p^{t})} \langle \nabla      \mathbb{E}_{\mathbf{g} \sim \mathcal{N}(0, \mathbf{I}_\mathbf{U})}\left[ \xi_\mathbf{g}(\theta_t)\mathbf{g}   \right], \mathbb{E}_{\mathbf{g} \sim \mathcal{N}(0, \mathbf{I}_{\mathbf{U}^\perp})}\left[ \xi_\mathbf{g}(\theta_t)\mathbf{g}   \right] \rangle 
\end{align*}

And therefore by Cauchy Schwartz:

\begin{align*}
\left|  \| \mathbb{E}_{\mathbf{g} \sim   \widehat{P} } \left[   z_1  \right] \|^2    - \| \nabla  F(\theta_t) \|^2 \right| &\leq \frac{1}{(p^{t})^2} \frac{\sigma^4 \tau^2 d_{\mathrm{active}}^4}{4} + \frac{1}{(1-p^{t})^2} \frac{\sigma^4 \tau^2 d_{\perp}^4}{4} + \\
\frac{2}{p^{t}} L \frac{\sigma^2 \tau d_{\mathrm{active}}^2}{4} + \frac{2}{1-p^{t}}L \frac{\sigma^2\tau d_{\perp}^2}{4} + \frac{2}{p^{t}(1-p^{t})} \frac{\sigma^4 \tau^2 d_{\mathrm{active}}^2 d_{\perp}^2}{16}
\end{align*}

As desired.
\end{proof}
\textbf{Bounding the variance of $z_{ES}$ and $z_1$.} We have now the necessary ingredients for bounding the variance of the ES isotropic and non isotropic estimators. We start by showing in theorem \ref{theorem::variance_isotropic_ES} that the variance of the isotropic estimator is roughly of the order of $(d+1) \| \nabla F(\theta_t) \|^2$. In contrast, Theorem \ref{theorem::variance_non_isotropic_estimator} characterizes the variance of $z_1$ the non isotropic ES gradient estimator in terms of the $\nabla F(\theta_t)$ decomposition along the subspaces spanned by $\mathbf{U}$ and $\mathbf{U}^\perp$. In the following section \ref{section::variance_reduction_non_isotropic} we show that with an appropriate choice of the probabilities $p^t$, $1-p^t$, and provided the subspace decomposition is adequate, the variance of the non isotropic gradient estimator can be much smaller than the variance of the $z_{ES}$.

\begin{theorem}\label{theorem::variance_isotropic_ES}
If $F$ satisfies Assumption 1 and 2, the variance of the ES estimator satisfies:
\begin{equation*}
    |\text{Var}_{\text{ES} } - (d+1) \| \nabla F(\theta_t) \|^2 | \leq   105\tau^2\sigma^4 d^4 + 6\tau\sigma^2 L  d^2 + 15d^3\sigma^2 L \tau + 105 \tau^2 \sigma^4 d^4
\end{equation*}

If $\sigma < \frac{1}{35}\sqrt{\frac{\epsilon \min(p^{t}, 1-p^{t})}{\tau  d^3 \max(L, 1)} }$:

\begin{equation*}
    |\text{Var}_{\text{ES} } - (d+1) \| \nabla F(\theta_t) \|^2 | \leq \epsilon
\end{equation*}

\end{theorem}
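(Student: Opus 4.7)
\textbf{Proof Proposal for Theorem \ref{theorem::variance_isotropic_ES}.}

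The plan is to use the standard decomposition $\mathrm{Var}(z_{ES}) = \mathbb{E}[\|z_{ES}\|^2] - \|\mathbb{E}[z_{ES}]\|^2$, then separately show that the first term is close to $(d+2)\|\nabla F(\theta_t)\|^2$ and the second is close to $\|\nabla F(\theta_t)\|^2$; their difference is then close to $(d+1)\|\nabla F(\theta_t)\|^2$. The second piece is already delivered for free by Theorem \ref{theorem::expected_squared_norm_bound_ES}, which contributes the $105\tau^2\sigma^4 d^4 + 6\tau \sigma^2 L d^2$ portion of the stated error. All the new work therefore lies in controlling $\mathbb{E}[\|z_{ES}\|^2]$.

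The second moment expansion is the heart of the argument. Writing $h(\mathbf{g}) = \frac{F(\theta_t + \sigma\mathbf{g}) - F(\theta_t - \sigma\mathbf{g})}{2\sigma}$, Assumption 2 gives $h(\mathbf{g}) = \mathbf{g}^\top \nabla F(\theta_t) + \xi_{\mathbf{g}}(\theta_t)$ with $|\xi_{\mathbf{g}}(\theta_t)| \leq \tau \sigma^2 \|\mathbf{g}\|^3$. Expanding the square,
\begin{equation*}
\mathbb{E}\left[\|z_{ES}\|^2\right] = \mathbb{E}\left[(\mathbf{g}^\top \nabla F(\theta_t))^2 \|\mathbf{g}\|^2\right] + 2\,\mathbb{E}\left[(\mathbf{g}^\top \nabla F(\theta_t))\, \xi_{\mathbf{g}}(\theta_t) \|\mathbf{g}\|^2\right] + \mathbb{E}\left[\xi_{\mathbf{g}}(\theta_t)^2 \|\mathbf{g}\|^2\right].
\end{equation*}
For the leading term, by rotational invariance I can assume $\nabla F(\theta_t) = \|\nabla F(\theta_t)\| e_1$, after which a direct coordinate-wise calculation using $\mathbb{E}[g_1^4] = 3$ and $\mathbb{E}[g_i^2] = 1$ yields $\mathbb{E}[g_1^2 \|\mathbf{g}\|^2] = d+2$, so the leading term equals exactly $(d+2)\|\nabla F(\theta_t)\|^2$.

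The remaining two terms are error contributions. For the cross term, I would use Cauchy--Schwarz together with $|\mathbf{g}^\top \nabla F(\theta_t)| \leq L \|\mathbf{g}\|$ (Assumption 1) and $|\xi_{\mathbf{g}}| \leq \tau \sigma^2 \|\mathbf{g}\|^3$ to bound its absolute value by $2 L \tau \sigma^2 \,\mathbb{E}[\|\mathbf{g}\|^6]$, and then invoke the standard chi-squared moment estimate $\mathbb{E}[\|\mathbf{g}\|^6] \leq d(d+2)(d+4) \leq 15 d^3$. For the quadratic error term, $|\xi_{\mathbf{g}}|^2 \leq \tau^2 \sigma^4 \|\mathbf{g}\|^6$ yields a bound of $\tau^2 \sigma^4 \mathbb{E}[\|\mathbf{g}\|^8] \leq 105\, \tau^2 \sigma^4 d^4$. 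Summing gives $|\mathbb{E}[\|z_{ES}\|^2] - (d+2)\|\nabla F(\theta_t)\|^2| \leq 15 d^3 \sigma^2 L \tau + 105\,\tau^2 \sigma^4 d^4$. Adding this to the bound from Theorem \ref{theorem::expected_squared_norm_bound_ES} via the triangle inequality produces exactly the error bound stated in the theorem.

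For the second part of the claim, I would simply substitute $\sigma < \tfrac{1}{35}\sqrt{\epsilon \min(p^t, 1-p^t)/(\tau d^3 \max(L,1))}$ into each of the four terms and verify that each is at most $\epsilon/4$, giving the clean $\epsilon$ bound. The main obstacle I anticipate is keeping track of the Gaussian moment constants carefully enough to land on the advertised $15$ and $105$ prefactors; once the moment bookkeeping is nailed down, the rest is routine. None of the steps requires new ideas beyond the expansion machinery already used in Theorems \ref{theorem::es_gradient_bias} and \ref{theorem::expected_squared_norm_bound_ES}.
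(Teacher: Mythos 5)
Your proposal is correct and follows essentially the same route as the paper's own proof: decompose $\mathrm{Var}(z_{ES})$ as second moment minus squared norm of the expectation, identify the leading term $(d+2)\|\nabla F(\theta_t)\|^2$ via the Gaussian moment $\mathbb{E}[g_1^2\|\mathbf{g}\|^2]=d+2$, bound the cross and quadratic $\xi_{\mathbf{g}}$ terms with $\mathbb{E}[\|\mathbf{g}\|^6]\leq 15d^3$ and $\mathbb{E}[\|\mathbf{g}\|^8]\leq 105d^4$, and reuse Theorem \ref{theorem::expected_squared_norm_bound_ES} for the $\|\mathbb{E}[z_{ES}]\|^2$ piece. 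The only discrepancy is the factor of $2$ on the cross term (giving $30d^3\sigma^2 L\tau$ rather than $15$), a constant-level looseness already present in the paper's own derivation and immaterial to the final $\epsilon$ bound.
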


\begin{proof}
The second moment of the ES estimator satisfies:
\begin{align*}
\mathbb{E}_{\mathbf{g} \sim \mathcal{N}(0, \mathbf{I})} \left[ z_{ES}^\top z_{ES}    \right] &= \mathbb{E}_{\mathbf{g} \sim \mathcal{N}(0, \mathbf{I})} \left[ \frac{ (F(\theta_t + \sigma \mathbf{g}) - F(\theta_t- \sigma \mathbf{g}))^2      )}{ 2^2\sigma^2}      \mathbf{g}^\top\mathbf{g} \right] \\
  &= \mathbb{E}_{\mathbf{g} \sim \mathcal{N}(0, \mathbf{I})} \left[ \left( \mathbf{g}^\top \nabla F(\theta_t) +\xi_\mathbf{g}(\theta_t)       \right)^2 \mathbf{g}^\top\mathbf{g}      \right] \\
&= \mathbb{E}_{\mathbf{g} \sim \mathcal{N}(0, \mathbf{I})} \left[ \nabla F(x_t)^\top \mathbf{g} \mathbf{g}^\top \mathbf{g} \mathbf{g}^\top \nabla F(\theta_t) + 2\nabla F(\theta_t)^\top \mathbf{g} \mathbf{g}^\top \mathbf{g} \xi_\mathbf{g}(\theta_t) + \xi_\mathbf{g}(\theta_t)^2 \mathbf{g}^\top \mathbf{g}       \right]\\
&= (d+2) \| \nabla F(\theta_t) \|^2 +  2\mathbb{E}_{\mathbf{g}\sim \mathcal{N}(0, \mathbf{I})} \left[      \nabla F(\theta_t)^\top \mathbf{g} \mathbf{g}^\top \mathbf{g} \xi_\mathbf{g}(\theta_t)   \right] +\\
&\mathbb{E}_{\mathbf{g} \sim \mathcal{N}(0, \mathbf{I})}\left[     \xi_\mathbf{g}(\theta_t)^2 \mathbf{g}^\top \mathbf{g} \right]
\end{align*}

Under Assumption 1 and 2, the following bound for the second and third terms of the last equality holds:

\begin{equation*}
   \left| \mathbb{E}_{\mathbf{g} \sim \mathcal{N}(0, \mathbf{I})}\left[   \nabla F(\theta_t)^\top \mathbf{g}\mathbf{g}^\top \mathbf{g} \xi_{\mathbf{g}}(\theta_t)  \right] \right| \leq \mathbb{E}_{\mathbf{g}\sim \mathcal{N}(0, \mathbf{I})}\left[  \| \nabla F(\theta_t) \| \|\mathbf{g}\|^6       \right]\leq 15d^3 \sigma^2 L \tau
\end{equation*}

And:

\begin{equation*}
  \left |  \mathbb{E}_{\mathbf{g} \sim \mathcal{N}(0, \mathbf{I})}\left[ \xi_{\mathbf{g} }(\theta_t)^2 \mathbf{g}^\top \mathbf{g}      \right] \right| \leq \tau^2 \sigma^4 \mathbb{E}_{\mathbf{g} \sim \mathcal{N}(0, \mathbf{I})} \left[\| \mathbf{g}\|^8     \right] \leq 105 \tau^2 \sigma^4 d^4
\end{equation*}

Therefore:

\begin{equation*}
    \text{Var}_{\text{ES}} = \underbrace{ \mathbb{E}_{\mathbf{g} \sim \mathcal{N}(0, \mathbf{I})} \left[ \frac{ (F(\theta_t + \sigma \mathbf{g}) - F(\theta_t- \sigma \mathbf{g}))^2      )}{ 2^2\sigma^2}      \mathbf{g}^\top\mathbf{g} \right]}_{\diamondsuit} - \underbrace{\left\|\mathbb{E}_{\mathbf{g} \sim \mathcal{N}(0, \mathbf{I})}\left[  \frac{F(\theta_t + \sigma \mathbf{g}) - F(\theta_t - \sigma \mathbf{g}) }{ 2\sigma}    \mathbf{g}\right] \right \|^2}_{\spadesuit}
\end{equation*}

After coalescing the bounds dervied in the preceeding section, we can obtain the following bound on the term $\diamondsuit$:

\begin{equation*}
\left| \diamondsuit - (d+2) \| \nabla F(\theta_t) \|^2 \right| \leq  15d^3\sigma^2 L \tau + 105 \tau^2 \sigma^4 d^4
\end{equation*}

Notice that by virtue of \ref{eq::bound_squared_expectation_squared_gradient}, the following bound on term $\spadesuit$ of the previous equation holds:

\begin{equation*}
  \left|\spadesuit  - \| \nabla F(\theta_t) \|^2 \right|  \leq     105\tau^2\sigma^4 d^4 + 6\tau\sigma^2 L  d^2
\end{equation*}

Combining these two inequalities the result follows.

\end{proof}

A similar theorem holds for $z_1$.

\begin{theorem}\label{theorem::variance_non_isotropic_estimator}

Let $\Gamma = \left( \frac{d_{\mathrm{active}}+2}{p^{t}} \| \mathbf{U}^\top \nabla F(\theta_t) \|^2 + \frac{d_{\perp} +2}{1-p^{t}} \| (\mathbf{U}^\perp)^\top F(\theta_t) \|^2 - \| \nabla F(\theta_t)\|^2 \right)$.

\begin{align*}
\left| Var_{\widehat{P}} - \Gamma \right | &\leq  \frac{1}{p^{t}}\left(   15d_{\mathrm{active}}^3\sigma^2 L \tau + 105 \tau^2 \sigma^4 d_{\mathrm{active}}^4  \right) +\\
&\frac{1}{1-p^{t}}\left(  15d_{\perp}^3\sigma^2 L \tau + 105 \tau^2 \sigma^4 d_{\perp}^4  \right) + \\
&  \frac{1}{(p^{t})^2} \frac{\sigma^4 \tau^2 d_{\mathrm{active}}^4}{4} + \frac{1}{(1-p^{t})^2} \frac{\sigma^4 \tau^2 d_{\perp}^4}{4} +  \\
&\frac{2}{p^{t}} L \frac{\sigma^2 \tau d_{\mathrm{active}}^2}{4} +\frac{2}{1-p^{t}}L \frac{\sigma^2\tau d_{\perp}^2}{4} + \\
&\frac{2}{p^{t}(1-p^{t})} \frac{\sigma^4 \tau^2 d_{\mathrm{active}}^2 d_{\perp}^2}{16}
\end{align*}

If $\sigma < \frac{1}{35}\sqrt{\frac{\epsilon \min(p^{t}, 1-p^{t})}{\tau  d^3 \max(L, 1)} }$:

\begin{equation*}
\left| Var_{\widehat{P}} - \left( \frac{d_{\mathrm{active}}+2}{p^{t}} \| \mathbf{U}^\top \nabla F(\theta_t) \|^2 + \frac{d_{\perp} +2}{1-p^{t}} \| (\mathbf{U}^\perp)^\top F(\theta_t) \|^2 - \| \nabla F(\theta_t)\|^2 \right) \right | \leq  \epsilon
\end{equation*}

\end{theorem}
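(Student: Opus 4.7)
The plan is to follow the template of the proof of Theorem~\ref{theorem::variance_isotropic_ES}, adapted to the hybrid sampling distribution $\widehat{P}$. First I would write $\mathrm{Var}_{\widehat{P}} = \mathbb{E}_{\mathbf{g}\sim \widehat{P}}[\|z_1\|^2] - \|\mathbb{E}_{\mathbf{g}\sim\widehat{P}}[z_1]\|^2$ and handle the two pieces separately.

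The critical structural observation is that because $\mathbf{U}^{\mathrm{act}}(\mathbf{U}^{\mathrm{act}})^\top$ and $\mathbf{U}^\perp(\mathbf{U}^\perp)^\top$ are complementary orthogonal projectors summing to $\mathbf{I}$, we have $\mathbf{C}_1^{-1} = \tfrac{1}{p^t}\mathbf{U}\mathbf{U}^\top + \tfrac{1}{1-p^t}\mathbf{U}^\perp(\mathbf{U}^\perp)^\top$ and hence $\mathbf{C}_1^{-2} = \tfrac{1}{(p^t)^2}\mathbf{U}\mathbf{U}^\top + \tfrac{1}{(1-p^t)^2}\mathbf{U}^\perp(\mathbf{U}^\perp)^\top$. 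When $\mathbf{g} \sim \mathcal{N}(0,\mathbf{I}_\mathbf{U})$, the vector $\mathbf{g}$ lies in $\mathcal{L}^{\mathrm{ES}}_{\mathrm{active}}$, so $\mathbf{g}^\top \mathbf{C}_1^{-2} \mathbf{g} = \|\mathbf{g}\|^2/(p^t)^2$, and symmetrically for the $\perp$ case. Writing $v = \tfrac{F(\theta+\sigma\mathbf{g}) - F(\theta-\sigma\mathbf{g})}{2\sigma}$ so that $z_1 = \mathbf{C}_1^{-1} v \mathbf{g}$, this yields
\[
\mathbb{E}_{\widehat{P}}[\|z_1\|^2] \;=\; \tfrac{1}{p^t}\,\mathbb{E}_{\mathbf{g}\sim\mathcal{N}(0,\mathbf{I}_\mathbf{U})}[v^2\|\mathbf{g}\|^2] \;+\; \tfrac{1}{1-p^t}\,\mathbb{E}_{\mathbf{g}\sim\mathcal{N}(0,\mathbf{I}_{\mathbf{U}^\perp})}[v^2\|\mathbf{g}\|^2].
\]

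Next I would expand $v = \mathbf{g}^\top \nabla F(\theta) + \xi_\mathbf{g}(\theta)$ as in the isotropic proof, where the Hessian term vanishes by antithetic cancellation and Assumption~2 yields $|\xi_\mathbf{g}(\theta)| \leq \tau\sigma^2\|\mathbf{g}\|^3$. The main term $(\mathbf{g}^\top \nabla F(\theta))^2\|\mathbf{g}\|^2$ is handled via the Gaussian identity: for $\mathbf{h} \sim \mathcal{N}(0,\mathbf{I}_k)$ and $u \in \mathbb{R}^k$, $\mathbb{E}[(\mathbf{h}^\top u)^2\|\mathbf{h}\|^2] = (k+2)\|u\|^2$. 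Applying this with the substitution $\mathbf{g} = \mathbf{U}\mathbf{h}$ (so $\mathbf{g}^\top \nabla F(\theta) = \mathbf{h}^\top \mathbf{U}^\top \nabla F(\theta)$ and $\|\mathbf{g}\|=\|\mathbf{h}\|$) gives the leading contribution $(d_{\mathrm{active}}+2)\, s_{\mathbf{U}^{\mathrm{act}}}$; the orthogonal case gives $(d_\perp+2)\, s_{\mathbf{U}^\perp}$. Combined with the $\tfrac{1}{p^t}$ and $\tfrac{1}{1-p^t}$ weights, these reproduce the first two terms of $\Gamma$.

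The cross and quadratic remainder terms from $v^2 = (\mathbf{g}^\top\nabla F)^2 + 2(\mathbf{g}^\top\nabla F)\xi_\mathbf{g} + \xi_\mathbf{g}^2$ are bounded via Cauchy--Schwarz together with $\|\nabla F\|\leq L$ (Assumption~1), the pointwise bound on $\xi_\mathbf{g}$, and the Gaussian moment bounds $\mathbb{E}\|\mathbf{g}\|^6 \leq 15k^3$, $\mathbb{E}\|\mathbf{g}\|^8 \leq 105k^4$ on the appropriate $k$-dimensional subspace ($k = d_{\mathrm{active}}$ or $d_\perp$). This produces exactly the first two lines of the stated error bound, each divided by $p^t$ or $1-p^t$. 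Finally, to convert the second-moment bound into a variance bound, I would subtract $\|\mathbb{E}_{\widehat{P}}[z_1]\|^2$ and invoke Theorem~\ref{theorem::bias_bound_z1} to replace this quantity by $\|\nabla F(\theta)\|^2$, which simultaneously produces the $-\|\nabla F(\theta)\|^2$ term in $\Gamma$ and the remaining three lines of the stated error bound. The small-$\sigma$ corollary follows by substituting the threshold $\sigma < \tfrac{1}{35}\sqrt{\tfrac{\epsilon\min(p^t,1-p^t)}{\tau d^3 \max(L,1)}}$ and bounding each of the seven error terms by $\epsilon/7$ exactly as in the analogous isotropic argument.

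The main obstacle I anticipate is bookkeeping: one must carefully track which Gaussian moment bound applies to which remainder term (with ambient dimension $d_{\mathrm{active}}$ versus $d_\perp$), and make sure that every remainder inherits its correct weighting by $\tfrac{1}{p^t}$, $\tfrac{1}{1-p^t}$, or their squares, depending on whether it comes from $\mathbf{C}_1^{-1}$ in $\mathbb{E}[z_1]$ or from $\mathbf{C}_1^{-2}$ in $\mathbb{E}[\|z_1\|^2]$. The clean moment identity $\mathbb{E}[(\mathbf{h}^\top u)^2\|\mathbf{h}\|^2] = (k+2)\|u\|^2$ is the one essential calculation that isolates $\Gamma$; everything else is a careful but mechanical extension of the proof of Theorem~\ref{theorem::variance_isotropic_ES}.
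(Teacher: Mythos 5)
Your proposal is correct and follows essentially the same route as the paper: decomposing $\mathrm{Var}_{\widehat{P}}$ into the second moment (split by conditioning on the two subspaces, with weights $\tfrac{1}{p^t}$ and $\tfrac{1}{1-p^t}$ arising from $\mathbf{C}_1^{-2}$) minus $\|\mathbb{E}[z_1]\|^2$, extracting the $(k+2)\,s$ leading terms via the Gaussian moment identity, bounding remainders with $\mathbb{E}\|\mathbf{g}\|^6\le 15k^3$, $\mathbb{E}\|\mathbf{g}\|^8\le 105k^4$, and invoking Theorem~\ref{theorem::bias_bound_z1} for the squared-expectation term. The bookkeeping you anticipate is exactly what the paper's proof does, so no gap remains.
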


\begin{proof}
The second moment of $z_1$ satisfies:

\begin{align*}
    \mathbb{E}_{\mathbf{g} \sim \widehat{P}} \left[ z_1^\top z_1   \right] &= \frac{1}{p^{t}} \mathbb{E}_{\mathbf{g} \sim \mathcal{N}(0, \mathbf{I}_\mathbf{U})} \left[  \frac{ (F(\theta_t + \sigma \mathbf{g}) - F(\theta_t- \sigma \mathbf{g}))^2      )}{ 2^2\sigma^2}      \mathbf{g}^\top\mathbf{g}    \right] + \\
    &\frac{1}{1-p^{t}}\mathbb{E}_{\mathbf{g} \sim \mathcal{N}(0, \mathbf{I}_{\mathbf{U}^\perp})}\left[    \frac{ (F(\theta_t + \sigma \mathbf{g}) - F(\theta_t- \sigma \mathbf{g}))^2      )}{ 2^2\sigma^2}      \mathbf{g}^\top\mathbf{g} \right]
\end{align*}

Notice that:

\begin{equation*}
    \text{Var}_{\widehat{P}} = \underbrace{ \mathbb{E}_{\mathbf{g} \sim \widehat{P}} \left[ \frac{ (F(\theta_t + \sigma \mathbf{g}) - F(\theta_t- \sigma \mathbf{g}))^2      )}{ 2^2\sigma^2}      \mathbf{g}^\top \mathbf{C}_1^{-2}\mathbf{g} \right]}_{\diamondsuit} - \underbrace{\left\|\mathbb{E}_{\mathbf{g} \sim \widehat{P}}\left[  \frac{F(\theta_t + \sigma \mathbf{g}) - F(\theta_t - \sigma \mathbf{g}) }{ 2\sigma}    C_1^{-1} \mathbf{g}\right] \right \|^2}_{\spadesuit}
\end{equation*}

By a similar argument as that in the previous theorem, we conclude:

\begin{align*}
    \left|\diamondsuit - \frac{d_{\mathrm{active}} + 2 }{ p^{t}} \| \mathbf{U}^\top \nabla F(\theta_t ) \|^2 - \frac{(d_{V_\perp } + 2)}{1-p^{t}} \| (\mathbf{U}^\perp)^\top \nabla F(\theta_t) \|^2 \right| &\leq \frac{1}{p^{t}}\left(   15d_{\mathrm{active}}^3\sigma^2 L \tau + 105 \tau^2 \sigma^4 d_{\mathrm{active}}^4  \right) +\\
    &\frac{1}{1-p^{t}}\left(  15d_{\perp}^3\sigma^2 L \tau + 105 \tau^2 \sigma^4 d_{\perp}^4  \right)
\end{align*}

By Theorem \ref{theorem::bias_bound_z1}:

\begin{align*}
    \left |\spadesuit -     \| \nabla F(\theta_t)  \|^2      \right| &\leq   \frac{1}{(p^{t})^2} \frac{\sigma^4 \tau^2 d_{\mathrm{active}}^4}{4} + \frac{1}{(1-p^{t})^2} \frac{\sigma^4 \tau^2 d_{\perp}^4}{4} + \frac{2}{p^{t}} L \frac{\sigma^2 \tau d_{\mathrm{active}}^2}{4} + \frac{2}{1-p^{t}}L \frac{\sigma^2\tau d_{\perp}^2}{4} + \\
    &\frac{2}{p^{t}(1-p^{t})} \frac{\sigma^4 \tau^2 d_{\mathrm{active}}^2 d_{\perp}^2}{16}
\end{align*}

\end{proof}

\subsection{Variance reduction via non isotropic sampling}\label{section::variance_reduction_non_isotropic}
The first result of this section is to condense the theorems in the previous sections into a single result (see Theorem \ref{theorem::condensed_results}). Lemma \ref{lemma::optimal_variance} then shows what the variance corresponding to the optimal choice of parameter $p^t$ is. Theorem \ref{theorem::condition_for_domination} then provides conditions under which the approximate variance (without considering the bias terms) corresponding to the optimal non isotropic estimator is smaller than the variance of the isotropic one. Finally Thoerem \ref{theorem::condensed_variance_results} takes into account the bias and states the final reuslt of this section. The combination of these results yield the proof of Theorem \ref{theorem:combined_theorem_variance1} in the main section of the paper.

\begin{theorem}\label{theorem::condensed_results}
Let $\epsilon > 0$. If $\sigma < \frac{1}{35}\sqrt{\frac{\epsilon \min(p^{t}, 1-p^{t})}{\tau  d^3 \max(L, 1)} }$ then:
\begin{align}
    \left \| \mathbb{E}_{\mathbf{g} \sim \mathcal{N}(0, \mathbf{I})}\left[ z_{ES}\right] - \nabla F(\theta_t) \right \| \leq \epsilon \\
        \left \| \mathbb{E}_{\mathbf{g} \sim \widehat{P}}\left[ z_{1}\right] - \nabla F(\theta_t) \right \| \leq \epsilon
\end{align}

and

\begin{align}
    \left|  \text{Var}_{ES} -  (d +1 )  \| \nabla F(\theta_t) \|^2     \right| \leq \epsilon \\
    \left|   \text{Var}_{\widehat{P}} - \left( \frac{d_{\mathrm{active}}+2}{p^{t}} \| \mathbf{U}^\top \nabla F(\theta_t) \|^2 + \frac{d_{\perp} +2}{1-p^{t}} \| (\mathbf{U}^\perp)^\top F(\theta_t) \|^2 - \| \nabla F(\theta_t)\|^2 \right)       \right| \leq \epsilon 
    \end{align}

\end{theorem}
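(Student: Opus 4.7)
The plan is to deduce each of the four inequalities directly from the corresponding earlier theorems by substituting the chosen bound on $\sigma$. The first inequality follows from Theorem \ref{theorem::es_gradient_bias}, the second from Theorem \ref{theorem::es_gradient_bias_non_isotropic}, the third from Theorem \ref{theorem::variance_isotropic_ES}, and the fourth from Theorem \ref{theorem::variance_non_isotropic_estimator}. Each of these theorems already supplies an explicit bound as a polynomial in $\sigma$ with coefficients depending on $d$, $L$, $\tau$, $p^t$, and $1-p^t$, so the only work remaining is to confirm that the hypothesis $\sigma < \frac{1}{35}\sqrt{\epsilon \min(p^t, 1-p^t) / (\tau d^3 \max(L,1))}$ makes every such bound at most $\epsilon$.

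First I would dispatch the bias bounds. From Theorem \ref{theorem::es_gradient_bias} the isotropic bias is at most $3\tau \sigma^2 d^2$, and squaring the hypothesis on $\sigma$ gives $\sigma^2 \leq \epsilon \min(p^t, 1-p^t)/(35^2 \tau d^3 \max(L,1))$, so the bound collapses to a quantity smaller than $\epsilon$ after noting $\min(p^t, 1-p^t) \leq 1$ and $d \geq 1$. The non isotropic bias from Theorem \ref{theorem::es_gradient_bias_non_isotropic} is at most $\frac{3\sigma^2 \tau d_{\mathrm{active}}^2}{p^t} + \frac{3 \sigma^2 \tau d_{\perp}^2}{1-p^t} \leq \frac{6 \sigma^2 \tau d^2}{\min(p^t, 1-p^t)}$; the extra factor $1/\min(p^t,1-p^t)$ is exactly cancelled by the matching factor in the numerator of the $\sigma$ bound, yielding $\leq \epsilon$.

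Next I would handle the variance bounds. Theorem \ref{theorem::variance_isotropic_ES} expresses the error as a sum of terms of the form $\tau^2 \sigma^4 d^4$ and $\tau \sigma^2 L d^k$ with $k \leq 3$; using $\sigma^2 \leq \epsilon / (35^2 \tau d^3 \max(L,1))$ shows each individual term is $O(\epsilon / d^{k'})$ for $k' \geq 0$, with the $1/35^2$ factor easily absorbing the numerical constants $15$ and $105$. For Theorem \ref{theorem::variance_non_isotropic_estimator}, every summand carries powers of $1/p^t$ or $1/(1-p^t)$ up to degree two, but the bound on $\sigma$ has $\min(p^t, 1-p^t)$ inside its square root so $\sigma^2$ already contains one factor of $\min(p^t, 1-p^t)$, and one can check term by term that each summand becomes a small multiple of $\epsilon$ (using $\sigma^4 \leq \sigma^2 \cdot \sigma^2$ and the fact that $d_{\mathrm{active}}, d_{\perp} \leq d$).

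The main obstacle, although mild, is bookkeeping: verifying that the single numerical constant $1/35$ in the $\sigma$ bound is tight enough to dominate the sum of the numerical constants $(3, 6, 15, 105, \ldots)$ appearing across all four theorems simultaneously, and that the worst-case dependence on $\min(p^t, 1-p^t)$ (which is quadratic in the $\widehat{P}$ variance bound but only linear in the $\sigma$ bound) still yields the desired $\epsilon$ after one applies $\sigma^4 \leq \sigma^2 \cdot \frac{\epsilon \min(p^t,1-p^t)}{35^2 \tau d^3 \max(L,1)}$. Once this check is carried out term by term, the conclusion follows by the triangle inequality applied to the four constituent bounds.
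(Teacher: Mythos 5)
Your proposal is correct and follows essentially the same route as the paper: the paper's own proof of this theorem is simply a remark that the result follows by combining Theorems \ref{theorem::es_gradient_bias}, \ref{theorem::es_gradient_bias_non_isotropic}, \ref{theorem::variance_isotropic_ES} and \ref{theorem::variance_non_isotropic_estimator}, each of which already carries the $\epsilon$-form conclusion under the stated bound on $\sigma$. Your additional constant-tracking (checking that $1/35^2$ absorbs the numerical factors and that the $\min(p^t,1-p^t)$ powers match) is exactly the bookkeeping the paper leaves implicit.
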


We say that in this case:

\begin{equation*}
    \text{Var}_{\widehat{P}} \approx \underbrace{\left( \frac{d_{\mathrm{active}}+2}{p^{t}} \| \mathbf{U}^\top \nabla F(\theta_t) \|^2 + \frac{d_{\perp} +2}{1-p^{t}} \| (\mathbf{U}^\perp)^\top F(\theta_t) \|^2 - \| \nabla F(\theta_t)\|^2 \right) }_{\text{Var}^M_{\widehat{P}}}
\end{equation*}

and $\text{Var}_{ES} \approx  (d +1 )  \| \nabla F(\theta_t) \|^2  $. We refer to $\text{Var}_{\widehat{P}}^M$ as the "main component" of the variance $\text{Var}_{\widehat{P}}$. Similarly we define $\text{Var}_{ES}^M =(d +1 )  \| \nabla F(\theta_t) \|^2$ and use the same name, "main component" of the variance $\text{Var}_{ES}^M$.

The optimal $p^{t}$, that which minimizes $\text{Var}^M_{\widehat{P}}$ equals:

\begin{equation*}
    (p^{t})^* = \frac{ \|\left(  \nabla F(\theta_t)     \right)_{active} \| \sqrt{ d_{\mathrm{active}} + 2 } }{  \| \left(\nabla F(\theta_t) \right)_{active} \| \sqrt{d_{\mathrm{active}} + 2} + \| \left( \nabla F(\theta)    \right)_\perp \| \sqrt{d_{\perp} + 2}  }  
\end{equation*}



\begin{proof}
Roughly the same argument as above yields the desired result.
\end{proof}

\begin{lemma}\label{lemma::optimal_variance}
The optimal variance $\text{Var}^M_{\widehat{P}^*}$ corresponding to $(p^{t})^*$ equals:

\begin{equation}
     \left[ \| \left( \nabla F(\theta_t)       \right)_{active} \| \sqrt{d_{\mathrm{active}} + 2} +     \|\left(    \nabla F(\theta_t) \right)_{\perp} \|\sqrt{d_{\perp} + 2} \right]^2 - \| \nabla F(\theta_t) \|^2 
\end{equation}

\end{lemma}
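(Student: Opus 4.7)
The plan is to treat the main-component variance
$\text{Var}^M_{\widehat{P}}(p^t) = \frac{d_{\mathrm{active}}+2}{p^{t}} s_{\mathbf{U}^{\mathrm{act}}} + \frac{d_{\perp}+2}{1-p^{t}} s_{\mathbf{U}^{\perp}} - \|\nabla F(\theta_t)\|^2$
as a one-dimensional smooth convex function of $p^t \in (0,1)$ and minimize it by first-order calculus, then plug the minimizer back in. Convexity is immediate since $1/p^t$ and $1/(1-p^t)$ are convex on $(0,1)$ and they are multiplied by nonnegative coefficients $s_{\mathbf{U}^{\mathrm{act}}}(d_{\mathrm{active}}+2)$ and $s_{\mathbf{U}^{\perp}}(d_{\perp}+2)$; the $-\|\nabla F(\theta_t)\|^2$ term is constant in $p^t$. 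As $p^t \to 0^+$ or $p^t \to 1^-$ the expression diverges to $+\infty$, so the minimizer lies in the interior and is characterized by the vanishing of the derivative.

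Next I would compute the stationarity condition. Differentiating gives
\begin{equation*}
\frac{d}{dp^t}\text{Var}^M_{\widehat{P}}(p^t) = -\frac{(d_{\mathrm{active}}+2)\,s_{\mathbf{U}^{\mathrm{act}}}}{(p^{t})^2} + \frac{(d_{\perp}+2)\,s_{\mathbf{U}^{\perp}}}{(1-p^{t})^2},
\end{equation*}
and setting this to zero yields $(1-p^t)\sqrt{(d_{\mathrm{active}}+2)\,s_{\mathbf{U}^{\mathrm{act}}}} = p^t\sqrt{(d_{\perp}+2)\,s_{\mathbf{U}^{\perp}}}$. Solving for $p^t$ gives exactly the formula for $(p^t)^*$ stated in the preceding theorem. (There is a mild issue to flag: if $s_{\mathbf{U}^{\mathrm{act}}} = 0$ the optimum degenerates to $p^t = 0$ and similarly for $s_{\mathbf{U}^{\perp}} = 0$; in these degenerate cases one takes the natural limit of the formula, and the same closed form for the optimal value still holds.)

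Finally, I would substitute $(p^t)^*$ back into $\text{Var}^M_{\widehat{P}}$. Writing $A = \sqrt{(d_{\mathrm{active}}+2)\,s_{\mathbf{U}^{\mathrm{act}}}}$ and $B = \sqrt{(d_{\perp}+2)\,s_{\mathbf{U}^{\perp}}}$, the stationarity condition becomes $(p^t)^* = A/(A+B)$ and $1-(p^t)^* = B/(A+B)$. A one-line computation then gives
\begin{equation*}
\frac{d_{\mathrm{active}}+2}{(p^t)^*}\,s_{\mathbf{U}^{\mathrm{act}}} = A(A+B), \qquad \frac{d_{\perp}+2}{1-(p^t)^*}\,s_{\mathbf{U}^{\perp}} = B(A+B),
\end{equation*}
whose sum is $(A+B)^2$. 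Subtracting $\|\nabla F(\theta_t)\|^2$ yields the claimed optimal main-variance $(A+B)^2 - \|\nabla F(\theta_t)\|^2$, which is the statement of the lemma.

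There is no real obstacle: the whole argument is convex calculus on a two-term sum of $1/p$ and $1/(1-p)$. The only step requiring a bit of care is the algebraic simplification after back-substitution; the key trick is to factor out $A+B$ in both terms simultaneously rather than expanding the squares term by term.
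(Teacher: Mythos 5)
Your proposal is correct and matches the paper's argument: the paper also just substitutes $(p^t)^*$ into the main-component variance formula, and your $A$, $B$ notation makes that substitution explicit, while your convexity and stationarity computation simply re-derives the optimal $(p^t)^*$ already established in the preceding theorem. Nothing further is needed.
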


\begin{proof}
The statement follows directly from substituting the expression for $(p^{t})^*$ into the variance formula.
\end{proof}

\begin{theorem}\label{theorem::condition_for_domination}
$\text{Var}_{\widehat{P}^*}^M \leq \text{Var}_{\text{ES}}^M$ if
\begin{equation*}
    | \sqrt{d_{\mathrm{active}} + 2}\| \left( \nabla F(\theta_t) \right)_{\perp} \| -  \sqrt{d_{\perp} + 2} \| \nabla F(\theta_t)_{active} \| |\geq \sqrt{2} \| \nabla F(\theta_t)\|
\end{equation*}

\end{theorem}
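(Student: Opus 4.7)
The plan is to reduce the claim to a clean algebraic identity. First I would set up the shorthand $A = \|(\nabla F(\theta_t))_{\mathrm{active}}\|$, $B = \|(\nabla F(\theta_t))_{\perp}\|$, $m = d_{\mathrm{active}} + 2$, $n = d_{\perp} + 2$, and $N = \|\nabla F(\theta_t)\|^2$. Since $\mathcal{L}^{\mathrm{ES}}_{\mathrm{active}}$ and $\mathcal{L}^{\mathrm{ES},\perp}_{\mathrm{active}}$ are orthogonal complements, Pythagoras gives $N = A^2 + B^2$, and by construction $m + n = d_{\mathrm{active}} + d_{\perp} + 4 = d + 4$. In this notation Lemma \ref{lemma::optimal_variance} reads $\mathrm{Var}^M_{\widehat{P}^*} = (A\sqrt{m} + B\sqrt{n})^2 - N$, while $\mathrm{Var}^M_{ES} = (d+1)N$.

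Next I would rewrite the desired inequality $\mathrm{Var}^M_{\widehat{P}^*} \leq \mathrm{Var}^M_{ES}$ as
\begin{equation*}
(A\sqrt{m} + B\sqrt{n})^2 \leq (d+2)N.
\end{equation*}
The key observation is the identity
\begin{equation*}
(A\sqrt{m} + B\sqrt{n})^2 + (B\sqrt{m} - A\sqrt{n})^2 = (m+n)(A^2 + B^2) = (d+4)N,
\end{equation*}
obtained by expanding both squares: the cross terms $\pm 2AB\sqrt{mn}$ cancel, leaving $A^2(m+n) + B^2(m+n)$. Substituting this identity into the target inequality yields
\begin{equation*}
(d+4)N - (B\sqrt{m} - A\sqrt{n})^2 \leq (d+2)N,
\end{equation*}
which is equivalent to $(B\sqrt{m} - A\sqrt{n})^2 \geq 2N$, i.e.\ $|B\sqrt{m} - A\sqrt{n}| \geq \sqrt{2}\,\|\nabla F(\theta_t)\|$. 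This is exactly the hypothesis of the theorem, so the implication follows.

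There is no real obstacle here beyond spotting the right identity; the argument is a one-line manipulation once one recognizes that the sum $(A\sqrt{m}+B\sqrt{n})^2 + (B\sqrt{m}-A\sqrt{n})^2$ collapses to $(m+n)N$. The only subtlety worth flagging is the orthogonality assumption used for Pythagoras, and the bookkeeping $m+n = d+4$ which converts the factor $(d+4) - 2 = d+2$ into the $(d+1)N + N$ coming from $\mathrm{Var}^M_{ES}$ minus the $-N$ term in $\mathrm{Var}^M_{\widehat{P}^*}$.
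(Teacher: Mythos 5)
Your proof is correct and follows essentially the same route as the paper: the paper also reduces $\mathrm{Var}^M_{\widehat{P}^*} \leq \mathrm{Var}^M_{ES}$ to $(A\sqrt{m}+B\sqrt{n})^2 \leq (d+2)\|\nabla F(\theta_t)\|^2$ and then, via the same expansion (using $A^2+B^2 = \|\nabla F(\theta_t)\|^2$ and $m+n = d+4$), arrives at the equivalent condition $(B\sqrt{m}-A\sqrt{n})^2 \geq 2\|\nabla F(\theta_t)\|^2$. Your packaging of the algebra as the identity $(A\sqrt{m}+B\sqrt{n})^2 + (B\sqrt{m}-A\sqrt{n})^2 = (m+n)(A^2+B^2)$ is just a tidier presentation of the same computation.
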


\begin{proof}
By definition, $\text{Var}_{\widehat{P}^*}^M < \text{Var}_{\text{ES}}^M$ if:

\begin{equation}\label{equation::variance_comparison1}
     \left(\| \left( \nabla F(\theta_t)       \right)_{active} \| \sqrt{d_{\mathrm{active}} + 2} +     \|\left(    \nabla F(\theta_t) \right)_{\perp} \|\sqrt{d_{\perp} + 2} \right)^2 < \| \nabla F(\theta_t) \|^2 (d+2)
\end{equation}

Let $a_1 = \sqrt{d_{\mathrm{active}}+2}, a_2 = \sqrt{d_{\perp} + 2}$, $b_1 = \| \left( \nabla F(\theta_t)\right)_{active}\|,b_2 = \|\left( \nabla F(\theta_t) \right)_{\perp}\|$, $a = \sqrt{d+2}$ and $b = \|\nabla F(x_t)\|$. 

The following relationships hold: $b_1^2 + b_2^2 = b^2$ and $a_1^2 + a_2^2 - 2 = a^2$. The bound we want to prove in Equation \ref{equation::variance_comparison1} reduces to finding conditions under which:

\begin{equation*}
    (a_1b_1 + a_2b_2)^2 \leq (b_1^2 + b_2^2) (a_1^2 + a_2^2 - 2)
\end{equation*}

Which holds iff:

\begin{equation*}
    2b_1^2 + 2b_2^2 \leq a_2^2b_1^2 +  a_1^2b_2^2 - 2a_1a_2b_1b_2
\end{equation*}

The later holds iff:

\begin{equation*}
    |a_1b_2 - a_2b_1| \geq \sqrt{2}b
\end{equation*}

Which holds iff:

\begin{equation*}
    \left |\sqrt{d_{\mathrm{active}} +2} \| \left(\nabla F(x_t) \right)_{\perp} \| - \sqrt{d_{\perp} + 2} \| \left(  \nabla F(x_t) \right)_{active} \| \right |\geq \sqrt{2}\| \nabla F(x_t) \|
\end{equation*}

\end{proof}

The inequality is strict for example when $\| \left(   \nabla F(x_t) \right)_{\perp} \|  = 0$ and $d_{\perp}  \geq 1$.

This in turn implies that, after taking into account the bias terms:

\begin{theorem}\label{theorem::condensed_variance_results}
If $\epsilon > 0$. If $\sigma < \frac{1}{35}\sqrt{\frac{\epsilon \min((p^{t})^*, 1-(p^{t})^*)}{\tau  d^3 \max(L, 1)} }$,
we denote by $\text{Var}_{(\widehat{P})^*}$ as the variance of the gradient estimator $z_1$ corresponding to the optimal (for $\text{Var}_{\widehat{P}}^M$) probability $(p^{t})^*$ and 
\begin{equation*}
    | \sqrt{d_{\mathrm{active}} + 2}\| \left( \nabla F(\theta_t) \right)_{\perp} \| -  \sqrt{d_{\perp} + 2} \| \nabla F(\theta_t)_{active} \| |\geq \sqrt{2} \| \nabla F(\theta_t)\|
\end{equation*}
Then:
\begin{equation*} 
\text{Var}_{\widehat{P}^*} \leq \text{Var}_{ES} + \epsilon
\end{equation*}
\end{theorem}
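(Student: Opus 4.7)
The plan is to assemble the conclusion directly from Theorem \ref{theorem::condensed_results} (which bounds each variance by its ``main component'' up to a small additive error controlled by $\sigma$) and Theorem \ref{theorem::condition_for_domination} (which compares the two main components under the stated geometric hypothesis on $\nabla F(\theta_t)$). No new calculation of bias or second moments is required: all the technical work has been done in the lemmas leading up to Theorem \ref{theorem::variance_isotropic_ES} and Theorem \ref{theorem::variance_non_isotropic_estimator}.

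First, I would invoke Theorem \ref{theorem::condensed_results} with the probability parameter $p^t = (p^t)^*$. Under the stated choice of $\sigma$, the theorem yields (up to a harmless absorption of the constant factor $2$ into the admissible upper bound on $\sigma$)
\begin{equation*}
|\mathrm{Var}_{ES} - \mathrm{Var}_{ES}^M| \leq \epsilon/2, \qquad |\mathrm{Var}_{\widehat P^*} - \mathrm{Var}_{\widehat P^*}^M| \leq \epsilon/2,
\end{equation*}
where $\mathrm{Var}_{ES}^M = (d+1)\|\nabla F(\theta_t)\|^2$ and $\mathrm{Var}_{\widehat P^*}^M$ is the optimal main-component value computed in Lemma \ref{lemma::optimal_variance}. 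Since Theorem \ref{theorem::condensed_results} only makes the tolerance tighter as $\sigma$ shrinks, and the hypothesis on $\sigma$ here uses the same functional form, we can force the two error terms to each be at most $\epsilon/2$ by choosing the constant $1/35$ slightly smaller, or equivalently by running the theorem with $\epsilon' = \epsilon/2$.

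Next, I would apply Theorem \ref{theorem::condition_for_domination}, whose hypothesis is exactly the geometric assumption
\begin{equation*}
\bigl|\sqrt{d_{\mathrm{active}}+2}\,\|(\nabla F(\theta_t))_\perp\| - \sqrt{d_\perp+2}\,\|(\nabla F(\theta_t))_{\mathrm{active}}\|\bigr| \geq \sqrt{2}\,\|\nabla F(\theta_t)\|
\end{equation*}
assumed in the statement. That theorem then delivers $\mathrm{Var}_{\widehat P^*}^M \leq \mathrm{Var}_{ES}^M$. Chaining the three inequalities gives
\begin{equation*}
\mathrm{Var}_{\widehat P^*} \leq \mathrm{Var}_{\widehat P^*}^M + \epsilon/2 \leq \mathrm{Var}_{ES}^M + \epsilon/2 \leq \mathrm{Var}_{ES} + \epsilon,
\end{equation*}
which is the claimed conclusion.

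There is no genuine obstacle in this proof; it is an assembly argument. The one point that requires mild care is bookkeeping the $\sigma$-dependence so that the two applications of Theorem \ref{theorem::condensed_results}, one for $\mathrm{Var}_{ES}$ and one for $\mathrm{Var}_{\widehat P^*}$, together contribute at most $\epsilon$ of slack rather than $2\epsilon$. This is absorbed either by halving the target tolerance when invoking the earlier results or by noting that the constant $1/35$ in the admissible $\sigma$-bound is not sharp, so replacing it by $1/(35\sqrt{2})$ suffices. Once that bookkeeping is done, the inequality follows in one line from the two cited theorems.
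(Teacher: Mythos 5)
Your proposal is correct and follows essentially the same route as the paper, which also obtains this statement by combining Theorem \ref{theorem::condensed_results} (variances are within $\epsilon$ of their main components for small enough $\sigma$) with Theorem \ref{theorem::condition_for_domination} (comparison of the main components under the stated geometric condition). Your explicit bookkeeping with $\epsilon/2$ at each invocation is in fact slightly more careful than the paper, which leaves the absorption of the two error terms into a single $\epsilon$ implicit.
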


\subsection{Adaptive Mirror Descent for variance reduction.} 

In this section we propose an adaptive procedure to learn the optimal probability parameter $(p^t)^*$ (as introduced in the previous section) this is necessary since as it can be infered from the discussion in section \ref{section::variance_reduction_non_isotropic}, the optimal variance depends of unknown parameters such as the projection of the true gradient onto the subspaces spanned by $\mathbf{U}$ and $\mathbf{U}^\perp$. The final result of this section \ref{theorem::regret_mirror_descent_appendix} corresponds to Theorem \ref{theorem::regret_mirror_descent} in the main section of the text.

Let $\mathbf{p}^l = \binom{p^l}{1-p^l}$. The main component $\Gamma$ of the variance of ${\widehat{\nabla}^{\mathrm{AT},\mathrm{asebo}}_{\mathrm{MC},k=1}F_{\sigma}(\theta)}$ as a function of $\mathbf{p}^l$ equals (Lemma \ref{theorem:combined_theorem_variance1}) :
\begin{equation}\label{equation::gamma_definition}
\Gamma = \ell(\mathbf{p}^l) =\frac{d_{\mathrm{active}} + 2}{\mathbf{p}^l(1)} s_{\mathbf{U}^{\mathrm{act}}} + \frac{ d_{\perp} +2}{\mathbf{p}^l(2)} s_{\mathbf{U}^{\perp}} - \|\nabla F(\theta) \|^2.
\end{equation}

In order to avoid the gradients to explote in norm, we parametrise $\mathbb{p}^l$ as follows:

\begin{equation*}
    \mathbf{p}^l = (1-2\beta) \mathbf{q}^l + \binom{\beta}{\beta}
\end{equation*}

For $\mathbf{q}^l \in \Delta_2$ and $\beta \in (0,1)$, the boundary probability bias.

Notice that $\Gamma$ is a convex function of $\mathbf{p}$ and also a convex function of $\mathbf{q}$. With a slight abuse of notation we denote $\ell(\mathbf{q}^l)$ as the loss parametrized by $\mathbf{q}^l$ (which satisfies $\ell( \mathbf{q}^l) = \ell(\mathbf{p}^l)$).


The gradient $\nabla_{\mathbf{q}^l} \ell(\mathbf{q}^l)$ equals:
\begin{equation*}
    \nabla_{\mathbf{q}^l}  \ell(\mathbf{q}^{l}) = (1-2\beta) \binom{-\frac{d_{\mathrm{active}} + 2}{((1-2\beta)\mathbf{q}^{l}(1) + \beta)^2} s_{\mathbf{U}^{\mathrm{ort}}} }{ -\frac{d_{\perp} + 2}{((1-2\beta)\mathbf{q}^{l}(2) + \beta)^2} s_{\mathbf{U}^{\perp}}} ,
\end{equation*}

And can be approximated (at the cost of some bias) using function evaluations.

\begin{lemma}\label{lemma::approximation_stochastic_gradient}
The gradient $\nabla_{\mathbf{q}^l} \ell(\mathbf{q}^l)$ satisfies:
\begin{equation*}
  \left \| \nabla_{\mathbf{q}^l}  \ell(\mathbf{q}^{l}) -
    \mathbb{E}\left[(1-2\beta) \binom{-\frac{a_l(d_{\mathrm{active}} + 2)}{((1-2\beta)\mathbf{p}^{l}(1) + \beta)^3}  }{ -\frac{(1-a_l)(d_{\perp} + 2)}{((1-2\beta)\mathbf{p}^{l}(2) + \beta)^3} } v_l^2    \right] \right \| \leq \frac{\epsilon (d+2)}{\left(\min(\mathbf{p}^l(1), \mathbf{p}^l(2))\right)^3} \leq \frac{\epsilon (d+2)}{\beta^3} ,
\end{equation*}
where $v_{l} = \frac{1}{2\sigma}\left(  F(\theta + \mathbf{g}_l) - F(\theta - \mathbf{g}_l)  \right) $ .
\end{lemma}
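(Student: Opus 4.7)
The plan is to compare both sides coordinate-wise and reduce the whole statement to approximating a single conditional second moment. First I would apply the chain rule to the reparameterization $\mathbf{p}^l = (1-2\beta)\mathbf{q}^l + (\beta,\beta)^\top$ inside \eqref{equation::gamma_definition} to recover the closed-form expression for $\nabla_{\mathbf{q}^l} \ell(\mathbf{q}^l)$ displayed immediately before the lemma. Next I would evaluate the expectation of the stochastic estimator by conditioning on $a_l \sim \mathrm{Ber}(\mathbf{p}^l(1))$; only the $a_l = 1$ branch contributes to the first coordinate, and it contributes a factor $\mathbb{E}[a_l] = \mathbf{p}^l(1)$ that cancels one power of the denominator, giving
\begin{equation*}
\mathbb{E}[\mathbf{e}_l(1)] \;=\; -(1-2\beta)\,\frac{d_{\mathrm{active}}+2}{(\mathbf{p}^l(1))^{2}}\, \mathbb{E}\bigl[v_l^2 \,\bigm|\, a_l = 1\bigr],
\end{equation*}
with the symmetric identity holding for the second coordinate (with $1-\mathbf{p}^l(1)$ and $s_{\mathbf{U}^{\perp}}$). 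Thus the vector-valued statement reduces to the two scalar approximation problems $\mathbb{E}[v_l^2 \mid a_l = 1] \approx s_{\mathbf{U}^{\mathrm{act}}}$ and $\mathbb{E}[v_l^2 \mid a_l = 0] \approx s_{\mathbf{U}^{\perp}}$.

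For the residuals I would reuse, essentially verbatim, the Taylor-expansion machinery already developed earlier in this Appendix for bounding the bias of $z_{ES}$ and $z_1$. Writing $\mathbf{g}_l = \sigma \tilde{\mathbf{g}}_l$ with $\tilde{\mathbf{g}}_l$ standard Gaussian supported on $\mathcal{L}^{\mathrm{ES}}_{\mathrm{active}}$, Assumption 2 yields $v_l = \tilde{\mathbf{g}}_l^\top \nabla F(\theta) + \xi_{\tilde{\mathbf{g}}_l}(\theta)$ with remainder satisfying $|\xi_{\tilde{\mathbf{g}}_l}(\theta)| \leq \tau \sigma^2 \|\tilde{\mathbf{g}}_l\|^3$. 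Squaring and taking expectations, the leading term equals $s_{\mathbf{U}^{\mathrm{act}}}$ by the standard second-moment identity for an isotropic Gaussian on a subspace, while the cross term and the squared remainder are controlled by Cauchy--Schwarz together with the Lipschitz constant $L$ from Assumption 1 and chi-square moment bounds on $\|\tilde{\mathbf{g}}_l\|$ in dimension at most $d$. These residuals carry prefactors of $\tau \sigma^2 L d^2$ and $\tau^2 \sigma^4 d^3$, and the explicit condition $\sigma < \tfrac{1}{35}\sqrt{\epsilon \min(p^t, 1-p^t)/(\tau d^3 \max(L,1))}$ is engineered precisely to collapse both into a single $\epsilon$. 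The same computation repeated on $\mathcal{L}^{\mathrm{ES}, \perp}_{\mathrm{active}}$ handles the second coordinate.

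Assembling the two coordinates and using $\mathbf{p}^l(i) \geq \beta$ (a direct consequence of the parameterization) to pass from $\min(\mathbf{p}^l(1),\mathbf{p}^l(2))^{-1}$ to $\beta^{-1}$ yields both claimed inequalities, with numerical constants absorbed into the prefactor $(d+2)$. The only substantive obstacle I anticipate is bookkeeping: one must carefully track each residual moment term so that, after it is multiplied by the $(\mathbf{p}^l(i))^{-2}$ factor coming from the derivative and further loosened to $(\min(\mathbf{p}^l(1),\mathbf{p}^l(2)))^{-3}$, it really is uniformly dominated by $\epsilon(d+2)/\min(\mathbf{p}^l(1),\mathbf{p}^l(2))^3$. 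Conceptually, however, the argument is just a localized Taylor-with-remainder calculation applied separately to each of the two Gaussian sampling modes, and requires no machinery beyond what has already been established in this Appendix.
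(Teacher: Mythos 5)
Your proposal is correct and follows essentially the same route as the paper's proof: condition on $a_l$ to cancel one power of $\mathbf{p}^l$, then replace $v_l^2$ by $(\mathbf{g}_l^\top\nabla F(\theta))^2$ up to the error $\xi^{(2)}_{\mathbf{g}_l}(\theta)$, which is bounded by $\sigma^4\tau^2\|\mathbf{g}_l\|^6 + 2\sigma^2\tau L\|\mathbf{g}_l\|^4$ (squared remainder plus Cauchy--Schwarz cross term) and driven below $\epsilon$ by the smallness condition on $\sigma$. Your write-up is in fact more explicit than the paper's, which leaves the conditioning and final assembly implicit.
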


\begin{proof}
We start with some notation borrowed from the previous section:
 \begin{equation*}
     \xi_{\mathbf{g}_l}^{(2)}(\theta ) =  \underbrace{\left(  \frac{F(\theta + \sigma \mathbf{g}_l) - F(\theta - \sigma \mathbf{g}_l) }{2\sigma}   \right)^2}_{v_l^2} - \left(  \mathbf{g}_l^\top \nabla F(\theta_t)  \right)^2 
 \end{equation*}
 Observe that:
\begin{align*} 
 |\xi^{(2)}_{\mathbf{g}_l}(\theta ) |&= \left| \left(  \frac{F(\theta + \sigma \mathbf{g}_l) - F(\theta - \sigma \mathbf{g}_l) }{2\sigma}   \right)^2 - \left(  \mathbf{g}_l^\top \nabla F(\theta)  \right)^2 \right| \\
 &\leq \xi_{\mathbf{g}_l}(\theta)^2 + 2 \left|  \mathbf{g}_l^\top \nabla F(\theta) \xi_{\mathbf{g}_l}(\theta)  \right|\\
 &\leq \sigma^4 \tau^2 \| \mathbf{g}_l \|^6 + 2 \sigma^2 \tau L \|\mathbf{g}_l \|^4
 \end{align*}

Since $\sigma < \frac{1}{35}\sqrt{\frac{\epsilon \min(p^{t}, 1-p^{t})}{\tau  d^3 \max(L, 1)} } $:
\begin{equation*}
\mathbb{E}\left[ |\xi_{\mathbf{g}_l}^{(2)}(\theta) |\right] \leq \epsilon
\end{equation*}

The result follows.

\end{proof}



Let $p^l = (1-2\beta)q^l + \beta$ be the probability that we choose to sample from the subspace $\mathcal{L}^{\mathrm{ES}}_{\mathrm{active}}$ and $1-p^l$ the probability that we choose to sample from $\mathcal{L}^{\mathrm{ES}, \perp}_{\mathrm{active}}$. Let $a_l$ be a Bernoulli random variable $a_l \in \{ 0, 1\}$ with $\mathbb{E}\left[ \binom{a_l}{1-a_l}  \right] = \mathbf{p}^l$. Define the stochastic gradient (with respect to $\mathbf{q}^l$):
\begin{equation*}
     \mathbf{e}_l = (1-2\beta)\begin{bmatrix} \left(-\frac{a_l(d_{\mathrm{active}}+2)}{p^{3}_l} \right) \\
\left(-\frac{(1-a_l)(d_{\perp} +2)}{(1-p_l)^3} \right) \end{bmatrix} v_l^2 
\end{equation*}
By definition this random vector (conditioned on the choice of $\mathbf{p}^l$) satisfies:
\begin{equation*}
  \left \| \mathbb{E}\left[    e_l \right] - \nabla_{\mathbf{q}^l} \ell(\mathbf{q}^l) \right \| \leq \frac{\epsilon (d+2)}{\left(\min(\mathbf{p}^l(1), \mathbf{p}^l(2))\right)^3} \leq \frac{\epsilon (d+2)}{\beta^3} ,
\end{equation*}

If $\epsilon$ is chosen small enough, the bias can be driven to be arbitrarily small.

\subsubsection{Mirror descent}

We treat this problem as that of minimizing the loss $\ell$ over the two dimensional simplex and resort to adapt a version of Mirror descent for it. As opposed the case of projected gradient descent, mirror descent performs updates that are adapted to the geometry of the simplex, ensuring the iterates always belong to the simplex and no projection step is necessary. The mirror descent updates are:
\begin{align*}
\mathbf{q}_{l}(1) &= \frac{\mathbf{q}_{l-1}(1) \exp(-\alpha\mathbf{e}_l(1))}{ \mathbf{q}_{l-1}(1) \exp(-\alpha \mathbf{e}_l(1))   + (\mathbf{q}_{l-1}(2)) \exp(-\alpha \mathbf{e}_l(2))} \\
\mathbf{q}_{l}(2) &= \frac{\mathbf{q}_{l-1}(2) \exp(-\alpha \mathbf{e}_l(2))}{ \mathbf{q}_{l-1}(1) \exp(-\alpha \mathbf{e}_l(1))   + (\mathbf{q}_{l-1}(2)) \exp(-\alpha \mathbf{e}_l(2))} 
\end{align*}

For a step size parameter $\alpha$. 






\subsection{Regret guarantees}

Using he notation in  \url{https://www.stat.berkeley.edu/~bartlett/courses/2014fall-cs294stat260/lectures/mirror-descent-notes.pdf}, In this case let $R(\mathbf{q}) = \mathbf{q}(1) \log(\mathbf{q}(1)) + \mathbf{q}(2) \log(\mathbf{q}(2)) - \mathbf{q}(1) - \mathbf{q}(2)$ and therefore:

\begin{equation}
    \nabla R(\mathbf{q}) = \binom{\log(\mathbf{q}(1) ) }{\log( \mathbf{q}(2))}
\end{equation}

The Fenchel conjugate of $R$ equals:

\begin{equation}
    R^*(\mathbf{q}) = e^{\mathbf{q}(1)} + e^{\mathbf{q}(2)}
\end{equation}

And therefore the gradient of the Fenchel conjugate equals:

\begin{equation}
    \nabla R^*(\mathbf{q}) = \binom{ \exp( \mathbf{q}(1)  ) }{\exp(\mathbf{q}(2) )  } 
\end{equation}

And:

\begin{equation}
    D_R(\mathbf{q}_1, \mathbf{q}_2) =  \mathbf{q}_1(1) \log \left( \frac{\mathbf{q}_1(1)}{\mathbf{q}_2(1) } \right) +  \mathbf{q}_1(2) \log\left( \frac{\mathbf{q}_1(2)}{\mathbf{q}_2(2) }\right ) + \mathbf{q}_2(1) - \mathbf{q}_1(1) + \mathbf{q}_2(2) - \mathbf{q}_1(2)
\end{equation}

Recall the update behind Mirror descent takes the form (stepsize $\alpha$:

\begin{enumerate}
    \item Play $\binom{a_l}{1-a_l}$ such that $\mathbb{E}[\binom{a_l}{1-a_l}] = \mathbf{p}^l$.
    \item Let $w_{l+1} = \nabla R^*\left(   \nabla R(\mathbf{p}^l) - \alpha \mathbf{e}_l \right)$
    \item Let $\mathbf{p}^{l+1} = \arg\min_{  \mathbf{p} \in \Delta_2 }       D_R(\mathbf{p}, w_{t+1}   )$
\end{enumerate}

Recall the general definition of Bregman divergence:

\begin{equation}
    D_\Psi(u, v) = \Psi(u) - \Psi(v) - \langle \nabla \Psi(v), u-v \rangle
\end{equation}

The following regret guarantee holds for Mirror descent (see \url{https://www.stat.berkeley.edu/~bartlett/courses/2014fall-cs294stat260/lectures/mirror-descent-notes.pdf}):

\begin{theorem}
If at time $l$ a convex loss function $f_l$ is revealed to the player and the player performs the mirror descent step using $\nabla f_l$ as a proxy linear function, with actions (from the mirror descent step) $a_l$ at time $l$, for any $a$ in the intersection of all of $f_l$'s domains, the following regret bound holds:

\begin{align}
\sum_{l=1}^C \left( f_l(a_l) - f_l(a)    \right) &\leq  \sum_{l=1}^C \nabla f_l(a_l)^\top (a_l - a) \\
&\leq \frac{1}{\alpha} \left( R(a) - R(a_1) +\sum_{l=1}^C D_{R^*}( \nabla R(a_l) - \alpha \nabla f_l(a_l), \nabla R(a_l)  )      \right)
\end{align}
\end{theorem}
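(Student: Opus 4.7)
The proof hinges on three observations. First, the iterates $\mathbf{q}^t_l$ in Algorithm \ref{Alg: exponentiated_sampling} are exactly the entropic-mirror-descent updates with potential $R(\mathbf{q}) = \mathbf{q}(1)\log\mathbf{q}(1) + \mathbf{q}(2)\log\mathbf{q}(2) - \mathbf{q}(1) - \mathbf{q}(2)$ and step size $\alpha$ applied to the linear surrogate losses $\mathbf{q} \mapsto \mathbf{e}_l^\top \mathbf{q}$; this is a direct verification since $\nabla R^*(y) = (e^{y(i)})_i$ turns the negative-gradient step into coordinate-wise exponentiation and the $\arg\min$ of $D_R$ on $\Delta_2$ produces the normalization that appears in line 6. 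Second, the reparametrized loss $\ell(\mathbf{q}) = \frac{(d_{\mathrm{active}}+2)s_{\mathbf{U}^{\mathrm{act}}}}{(1-2\beta)\mathbf{q}(1)+\beta} + \frac{(d_{\perp}+2)s_{\mathbf{U}^{\perp}}}{(1-2\beta)\mathbf{q}(2)+\beta} - \|\nabla F(\theta)\|^2$ is convex on $\Delta_2$, and $\mathbf{e}_l$ is a conditional approximation of $\nabla_{\mathbf{q}} \ell(\mathbf{q}^t_l)$ with bias bounded by $\epsilon(d+2)/\beta^3$ via Lemma \ref{lemma::approximation_stochastic_gradient}.

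Given these, I would invoke the general mirror-descent regret inequality displayed at the end of the appendix, applied with $f_l(\mathbf{q}) = \mathbf{e}_l^\top \mathbf{q}$ and comparator $\mathbf{q}^*$ corresponding to the minimizer $\mathbf{p}^* \in \beta + (1-2\beta)\Delta_2$ of $\ell$. The inequality yields
\begin{equation*}
\sum_{l=1}^C \mathbf{e}_l^\top(\mathbf{q}^t_l - \mathbf{q}^*) \leq \frac{R(\mathbf{q}^*) - R(\mathbf{q}^t_1)}{\alpha} + \frac{1}{\alpha}\sum_{l=1}^C D_{R^*}\bigl(\nabla R(\mathbf{q}^t_l) - \alpha \mathbf{e}_l,\ \nabla R(\mathbf{q}^t_l)\bigr).
\end{equation*}
With a near-uniform initialization of $\mathbf{q}^t_1$ one has $R(\mathbf{q}^*) - R(\mathbf{q}^t_1) \leq \log 2$, and a second-order Taylor expansion of the exponential Bregman divergence, combined with the fact that only one coordinate of $\mathbf{e}_l$ is nonzero per round, bounds each $D_{R^*}$ term by $\tfrac{\alpha^2}{2}\|\mathbf{e}_l\|_\infty^2$ up to absolute constants (a standard exp-regret calculation valid as soon as $\alpha\|\mathbf{e}_l\|_\infty \leq 1$, which the step-size tuning below enforces).

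Next I would take expectations and translate the linear-surrogate regret into a regret in $\ell$. Convexity of $\ell$ gives $\ell(\mathbf{q}^t_l) - \ell(\mathbf{q}^*) \leq \nabla\ell(\mathbf{q}^t_l)^\top(\mathbf{q}^t_l - \mathbf{q}^*)$; adding and subtracting $\mathbb{E}[\mathbf{e}_l \mid \mathbf{q}^t_l]$ and applying Cauchy--Schwarz contributes a bias term bounded by $\epsilon(d+2)/\beta^3$ per round, which with $\epsilon = \beta^3/(2C(d+1))$ sums to at most $1/2$ and thus contributes $O(1/C)$ after averaging. The conditional second moment $\mathbb{E}[\|\mathbf{e}_l\|_\infty^2 \mid \mathbf{q}^t_l]$ is computed by expanding $v_l^4$ in Gaussian fourth moments over $\mathcal{N}(0, \sigma\mathbf{I}_{\mathcal{L}^{\mathrm{ES}}_{\mathrm{active}}})$ and $\mathcal{N}(0, \sigma\mathbf{I}_{\mathcal{L}^{\mathrm{ES},\perp}_{\mathrm{active}}})$ respectively; the $\sigma$-smallness hypothesis replaces $v_l^2$ by $(\mathbf{g}_l^\top \nabla F(\theta))^2$ up to $\epsilon$, and the regularizer $p^t_l \geq \beta$ tames the $1/p^3$ denominators, yielding a uniform bound $V \lesssim (d_{\mathrm{active}}+2)^2 s_{\mathbf{U}^{\mathrm{act}}}^2 + (d_{\perp}+2)^2 s_{\mathbf{U}^{\perp}}^2$ after the $\beta$-powers are absorbed. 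Plugging the stated $\alpha = 2\beta^2/\sqrt{CV}$ balances $\log 2/(\alpha C)$ against $\alpha V/(2\beta^5)$, giving an averaged regret of order $\sqrt{V}/(\beta^2\sqrt{C})$; item 2 of Theorem \ref{theorem:combined_theorem_variance1} then identifies $\sqrt{V}$ with $\sqrt{s_{\mathbf{U}^{\mathrm{act}}}(d_{\mathrm{active}}+2)} + \sqrt{s_{\mathbf{U}^{\perp}}(d_{\perp}+2)} = \sqrt{\mathrm{Var}_{\mathrm{opt}} + \|\nabla F(\theta)\|^2}$, collapsing the leading term to $\mathrm{Var}_{\mathrm{opt}}/(\beta^2\sqrt{C})$ modulo absolute constants.

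The main obstacle is the second-moment bookkeeping for $\mathbf{e}_l$: one must simultaneously exploit that a single coordinate is active per round, that the $1/(p^t_l)^3$ scaling is controlled by the $\beta$-regularization, and that the resulting upper bound matches exactly the quantity inside the square root in the specified $\alpha$—otherwise the tuned regret inflates by an unwanted dimension factor and fails to collapse to the clean $\mathrm{Var}_{\mathrm{opt}}$-form on the right-hand side. The bias of $\mathbf{e}_l$, by contrast, is essentially free once $\epsilon$ is taken as small as prescribed, and the deterministic mirror-descent inequality is invoked as a black box with only the standard exponential-Bregman-divergence Taylor bound to provide.
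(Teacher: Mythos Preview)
Your proposal does not prove the stated theorem. The statement you were asked to prove is the \emph{general} mirror-descent regret inequality
\[
\sum_{l=1}^C \bigl( f_l(a_l) - f_l(a) \bigr) \leq \sum_{l=1}^C \nabla f_l(a_l)^\top(a_l - a) \leq \frac{1}{\alpha}\Bigl( R(a) - R(a_1) + \sum_{l=1}^C D_{R^*}\bigl(\nabla R(a_l) - \alpha \nabla f_l(a_l),\, \nabla R(a_l)\bigr)\Bigr),
\]
which in the paper is quoted, without proof, from external lecture notes and then used as a black box. What you have written is instead an argument for the \emph{downstream} result, Theorem~\ref{theorem::regret_mirror_descent}, which bounds the expected regret of Algorithm~\ref{Alg: exponentiated_sampling} in terms of $\mathrm{Var}_{\mathrm{opt}}$. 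Your own text makes this explicit: you ``invoke the general mirror-descent regret inequality displayed at the end of the appendix'' and then proceed with bias control, second-moment bounds, and step-size tuning. That is, you assume the very statement you were asked to establish.

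A correct proof of the stated theorem would be the standard telescoping argument for mirror descent: the first inequality is immediate from convexity of each $f_l$; for the second, one writes $w_{l+1} = \nabla R^*(\nabla R(a_l) - \alpha \nabla f_l(a_l))$, uses the three-point identity for Bregman divergences to get $\alpha \nabla f_l(a_l)^\top(a_l - a) = D_R(a, a_l) - D_R(a, w_{l+1}) + D_R(a_l, w_{l+1})$, observes that the projection step $a_{l+1} = \arg\min D_R(\cdot, w_{l+1})$ gives $D_R(a, w_{l+1}) \geq D_R(a, a_{l+1})$, telescopes the first two terms, and finally rewrites $D_R(a_l, w_{l+1}) = D_{R^*}(\nabla R(a_l) - \alpha \nabla f_l(a_l), \nabla R(a_l))$ via Fenchel duality. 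None of this appears in your proposal. The material you did supply---identification of Algorithm~\ref{Alg: exponentiated_sampling} with entropic mirror descent, the bias bound from Lemma~\ref{lemma::approximation_stochastic_gradient}, the second-moment control, and the step-size balancing---is relevant to Theorem~\ref{theorem::regret_mirror_descent}, not to the statement at hand.
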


Also remember that if $R^*$ is $\theta$-smooth with respect to some norm $\| \cdot \|$, we can upper bound $D_{R^*}$. The former ($R^*$ being $\theta-$smooth) holds if $R$ is $\frac{1}{\theta}$-strongly convex with respect to the dual norm $\| \cdot \|_*$. When $R$ equals the entropy, this is $1-$strongly convex with respect to the $L_1$ norm and hence $R^*$ is $1-$strongly smooth with respect to the $L_\infty$ norm:

\begin{equation}
D_{R^*}(a, b) \leq \frac{\| a-b\|_\infty^2}{2}
\end{equation}

In our case, let $f_l(\mathbf{q}) = \mathbf{e}_l^\top \mathbf{q}$. Using the upper bound previously described for $D_R$. For any $\mathbf{q} \in \Delta_2$:

\begin{equation*}
    \sum_{l=1}^C f_l(\mathbf{q}^l) - f_l(\mathbf{q}) \leq \frac{1}{\alpha} \left(  R(\mathbf{q}) -    R(\mathbf{q}^1) + \alpha^2 \sum_{l=1}^C \frac{\| \nabla f_l( \mathbf{q}_l  ) \|_\infty^2}{2}    \right)
\end{equation*}

Taking expectations, since $\left | \mathbb{E}[ f_l(\mathbf{q}) | \mathbf{q}_l] - \nabla_{\mathbf{q}^l}^\top \ell(\mathbf{q}^l) \mathbf{q} \right |\leq \frac{\epsilon (d+2)}{\beta^3}$ we obtain the following result:

\begin{equation*}
   \left(  \sum_{l=1}^C   \nabla_{\mathbf{q}^l}^\top \ell(\mathbf{q}^l) \left(     \mathbf{q}^l       -  \mathbf{q}  \right) \right) -C\frac{2\epsilon (d+2)}{\beta^3} \leq \frac{1}{\alpha} \left(  R(\mathbf{q}) -   \mathbb{E}\left[   R(\mathbf{q}^1) \right] + \alpha^2 \sum_{l=1}^C \frac{\mathbb{E}\left[  \|  \mathbf{e}_l \|_\infty^2\right]}{2}    \right)
\end{equation*}

Now we bound the Right Hand side of the expression above. Notice that $R(\mathbf{q}) \leq 2$ and $R(\mathbf{q}_1) \geq 0$. We can also bound the expectation $\mathbb{E}\left[   \| \mathbf{e}_l \|_\infty^2\right]$.
\begin{lemma}

$\left| \mathbb{E}\left[   \| \mathbf{e}_l \|^2\right] - \| \nabla_{\mathbf{q}^l}  \ell(\mathbf{q}^{l})\|^2  \right| \leq \frac{ \epsilon (d+1)}{\beta^3}$

\end{lemma}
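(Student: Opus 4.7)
The plan is to compute $\mathbb{E}[\|\mathbf{e}_l\|^2]$ in closed form and compare it to $\|\nabla_{\mathbf{q}^l}\ell(\mathbf{q}^l)\|^2$, following the same Taylor-expansion template used in the preceding lemma that bounded the bias of $\mathbb{E}[\mathbf{e}_l]$. The observation that makes the squared norm tractable is that $a_l \in \{0,1\}$, so $a_l^2 = a_l$ and $(1-a_l)^2 = 1-a_l$; in particular the two coordinates of $\mathbf{e}_l$ are never simultaneously nonzero. The first step is to expand
\[
\|\mathbf{e}_l\|^2 = (1-2\beta)^2 v_l^4 \left(\frac{a_l(d_{\mathrm{active}}+2)^2}{p_l^6} + \frac{(1-a_l)(d_{\perp}+2)^2}{(1-p_l)^6}\right),
\]
and then condition on $a_l$ so that the expectation splits into two Gaussian integrals, one over $\mathcal{L}^{\mathrm{ES}}_{\mathrm{active}}$ and the other over $\mathcal{L}^{\mathrm{ES},\perp}_{\mathrm{active}}$, weighted by $p_l$ and $1-p_l$ respectively.

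Next, I would substitute the decomposition $v_l^2 = (\mathbf{g}_l^\top \nabla F(\theta))^2 + \xi^{(2)}_{\mathbf{g}_l}(\theta)$ already derived in the proof of the preceding lemma, along with the pointwise bound $|\xi^{(2)}_{\mathbf{g}_l}(\theta)| \leq \sigma^4 \tau^2 \|\mathbf{g}_l\|^6 + 2\sigma^2 \tau L \|\mathbf{g}_l\|^4$. Squaring gives $v_l^4 = (\mathbf{g}_l^\top \nabla F(\theta))^4 + 2 (\mathbf{g}_l^\top \nabla F(\theta))^2 \xi^{(2)}_{\mathbf{g}_l}(\theta) + \xi^{(2)}_{\mathbf{g}_l}(\theta)^2$. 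The leading $(\mathbf{g}_l^\top \nabla F(\theta))^4$ term is the fourth moment of a real Gaussian restricted to the relevant subspace, so it evaluates to a closed-form multiple of $s_{\mathbf{U}^{\mathrm{act}}}^2$ or $s_{\mathbf{U}^{\perp}}^2$; this is the contribution I would pair with $\|\nabla_{\mathbf{q}^l}\ell(\mathbf{q}^l)\|^2$.

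To handle the cross and quadratic bias terms, I would apply Cauchy--Schwarz together with the Gaussian moment bounds $\mathbb{E}[\|\mathbf{g}_l\|^{2k}] = O(d_{\mathrm{active}}^k)$ or $O(d_{\perp}^k)$ already established in earlier proofs of the appendix. Each such remainder carries at least one factor of $\sigma^2$ coming from $\xi^{(2)}$, multiplied by a polynomial in $d$ from moments of $\|\mathbf{g}_l\|$. Plugging in the hypothesis $\sigma < \frac{1}{35}\sqrt{\epsilon \min(p^t, 1-p^t)/(\tau d^3 \max(L,1))}$ absorbs all the $\sigma$-dependent factors into an $\epsilon$-surplus, while the outer $1/p_l^3$ and $1/(1-p_l)^3$ are uniformly controlled by $1/\beta^3$; the resulting arithmetic is the same juggle of constants that was performed for the bias in the previous lemma and yields the claimed $\epsilon(d+1)/\beta^3$ bound.

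I expect the main obstacle to be the cross term $2 (\mathbf{g}_l^\top \nabla F(\theta))^2 \xi^{(2)}_{\mathbf{g}_l}(\theta)$, which couples signal with bias only linearly and is therefore only one factor of $\sigma$ smaller than the leading term rather than two. This is what pins down the exact exponent of $\sigma$ required in the hypothesis, and, in combination with the $1/\beta^3$ blow-up of the stochastic-gradient scale, it is precisely what selects the $\mathrm{Var}_{\mathrm{opt}}/(\beta^2\sqrt{C})$ rate that ultimately appears in the final regret bound.
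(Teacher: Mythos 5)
Your plan follows the template the paper itself gestures at (its entire proof is ``a similar calculation as in Lemma~\ref{lemma::approximation_stochastic_gradient} yields the desired result''), and several of your steps are sound: using $a_l^2=a_l$ and $a_l(1-a_l)=0$ to write $\|\mathbf{e}_l\|^2=(1-2\beta)^2v_l^4\bigl(\tfrac{a_l(d_{\mathrm{active}}+2)^2}{p_l^6}+\tfrac{(1-a_l)(d_{\perp}+2)^2}{(1-p_l)^6}\bigr)$, conditioning on $a_l$, substituting $v_l^2=(\mathbf{g}_l^\top\nabla F(\theta))^2+\xi^{(2)}_{\mathbf{g}_l}(\theta)$, and absorbing every $\xi^{(2)}$-carrying remainder into an $\epsilon/\beta^3$-type error via the assumed bound on $\sigma$. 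The genuine gap is in the step where you ``pair'' the leading fourth-moment term with $\|\nabla_{\mathbf{q}^l}\ell(\mathbf{q}^l)\|^2$: these two quantities do not cancel, and their difference is not $\sigma$-small. Carrying out your own computation, the noise-free part of $\mathbb{E}\bigl[\|\mathbf{e}_l\|^2\bigr]$ is
\begin{equation*}
3(1-2\beta)^2\left[\frac{(d_{\mathrm{active}}+2)^2\,s_{\mathbf{U}^{\mathrm{act}}}^2}{(p_l)^5}+\frac{(d_{\perp}+2)^2\,s_{\mathbf{U}^{\perp}}^2}{(1-p_l)^5}\right],
\end{equation*}
coming from $\mathbb{E}[(\mathbf{g}_l^\top\nabla F)^4]=3s^2$ and one surviving factor $p_l$ (resp.\ $1-p_l$) from the mixture, whereas
\begin{equation*}
\|\nabla_{\mathbf{q}^l}\ell(\mathbf{q}^l)\|^2=(1-2\beta)^2\left[\frac{(d_{\mathrm{active}}+2)^2\,s_{\mathbf{U}^{\mathrm{act}}}^2}{(p_l)^4}+\frac{(d_{\perp}+2)^2\,s_{\mathbf{U}^{\perp}}^2}{(1-p_l)^4}\right].
\end{equation*}
The discrepancy is of order $(d+2)^2\bigl(s_{\mathbf{U}^{\mathrm{act}}}^2+s_{\mathbf{U}^{\perp}}^2\bigr)/\beta^5$, independent of $\sigma$, so it cannot be pushed below $\epsilon(d+1)/\beta^3$ (with the final choice $\epsilon=\beta^3/(2C(d+1))$ the right-hand side is $1/(2C)$). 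Conceptually this is just the Jensen gap: $\mathbb{E}\|\mathbf{e}_l\|^2$ exceeds $\|\mathbb{E}\mathbf{e}_l\|^2\approx\|\nabla_{\mathbf{q}^l}\ell\|^2$ by the variance of the stochastic gradient, which your decomposition never makes small.

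To be fair, the paper's one-line proof skates over exactly the same point, and the lemma as literally stated appears too strong; what the ``similar calculation'' honestly delivers is that $\mathbb{E}\|\mathbf{e}_l\|^2$ is within $\epsilon\,\mathrm{poly}(d)/\beta^3$ of the noise-free second moment displayed above, hence for instance $\mathbb{E}\|\mathbf{e}_l\|_\infty^2\leq\mathbb{E}\|\mathbf{e}_l\|^2\leq\frac{3}{\beta}\|\nabla_{\mathbf{q}^l}\ell(\mathbf{q}^l)\|^2+\epsilon\,\mathrm{poly}(d)/\beta^3$. If you prove that corrected version instead, the mirror-descent regret argument goes through with an extra factor $3/\beta$ in the $\alpha$-tuning and in the final bound; as your proposal stands, however, the claimed pairing step would fail and the stated inequality is not established.
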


\begin{proof}
A similar calculation as in Lemma \ref{lemma::approximation_stochastic_gradient} yields the desired result.
\end{proof}

Since:
\begin{equation}
    \mathbb{E}\left[   \| \mathbf{e}_l \|_\infty^2\right] \leq \mathbb{E}\left[   \| \mathbf{e}_l \|^2\right] 
\end{equation}

And $\| \nabla_{\mathbf{q}^l}  \ell(\mathbf{q}^{l})\|^2 \leq \frac{1}{\beta^4}\left(   (d_{\mathrm{active}}+2)^2 s_{\mathbf{U}^{\mathrm{ort}}}^2+(d_{\mathrm{\perp}}+2)^2 s_{\mathbf{U}^{\perp}}^2\right)$. 

We obtain the following bound:
\begin{equation*}
   \left(  \sum_{l=1}^C   \nabla_{\mathbf{q}^l}^\top \ell(\mathbf{q}^l) \left(     \mathbf{q}^l       -  \mathbf{q}  \right) \right) -C\frac{2\epsilon (d+2)}{\beta^3} \leq \frac{2}{\alpha}   +  \frac{\alpha C }{2\beta^4}\left(   (d_{\mathrm{active}}+2)^2 s_{\mathbf{U}^{\mathrm{ort}}}^2+(d_{\mathrm{\perp}}+2)^2 s_{\mathbf{U}^{\perp}}^2\right) +  \frac{ \alpha C \epsilon (d+1)}{\beta^3}
\end{equation*}

The following theorem follows:

\begin{theorem}
If $\alpha = \frac{2\beta^2}{\sqrt{C}\sqrt{   (d_{\mathrm{active}}+2)^2 s_{\mathbf{U}^{\mathrm{ort}}}^2+(d_{\mathrm{\perp}}+2)s_{\mathbf{U}^{\perp}}^2 }}$ and $\epsilon = \frac{\beta^3}{2C (d+1)}$, for any $\mathbf{q} \in \Delta_2$:

\begin{equation*}
    \left(  \sum_{l=1}^C   \nabla_{\mathbf{q}^l}^\top \ell(\mathbf{q}^l) \left(     \mathbf{q}^l       -  \mathbf{q}  \right) \right) \leq \frac{\sqrt{C} \sqrt{   (d_{\mathrm{active}}+2)^2 s_{\mathbf{U}^{\mathrm{ort}}}^2+(d_{\mathrm{\perp}}+2)s_{\mathbf{U}^{\perp}}^2 } }{\beta^2}+ 1
\end{equation*}

\end{theorem}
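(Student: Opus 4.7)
The plan is to recognize Theorem \ref{theorem::regret_mirror_descent} as an online convex optimization statement over the reparametrized two-dimensional simplex $\Delta_2$, where the iterate $\mathbf{q}^t_l = (\mathbf{p}^t_l - \beta \mathbf{1})/(1-2\beta)$ lies in $\Delta_2$ and the loss $\ell$ defined in Equation \ref{equation::gamma_definition} is a convex function of $\mathbf{q}$ (each term $1/(\mathbf{p}(i))$ is convex and nonnegatively weighted). Algorithm \ref{Alg: exponentiated_sampling}'s multiplicative update is exactly mirror descent with the negative-entropy mirror map $R(\mathbf{q})=\sum_i \mathbf{q}(i)\log \mathbf{q}(i) - \mathbf{q}(i)$, fed with the stochastic vector $\mathbf{e}_l$ as a proxy for $\nabla_{\mathbf{q}^l}\ell(\mathbf{q}^l)$. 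Convexity then lets us bound the function-value regret by the linearized regret $\sum_l \nabla_{\mathbf{q}^l}\ell(\mathbf{q}^l)^\top(\mathbf{q}^l-\mathbf{q})$, which is what mirror descent controls.

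The first technical step is to show that $\mathbf{e}_l$ is a nearly-unbiased estimator of $\nabla_{\mathbf{q}^l}\ell(\mathbf{q}^l)$. This is essentially Lemma \ref{lemma::approximation_stochastic_gradient}: expanding $v_l^2$ via the third-order Taylor assumption and integrating over the Bernoulli $a_l^t$ reproduces the gradient formula up to a residual controlled by $\xi^{(2)}_{\mathbf{g}_l}(\theta)$, which is $O(\sigma^2 \tau d^3 L)$; the choice $\sigma < \frac{1}{35}\sqrt{\epsilon \min(p^t,1-p^t)/(\tau d^3 \max(L,1))}$ keeps this bias below $\epsilon(d+2)/\beta^3$. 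The second technical step is to bound $\mathbb{E}\|\mathbf{e}_l\|_\infty^2 \leq \mathbb{E}\|\mathbf{e}_l\|^2$; a direct computation using that $p^t_l \geq \beta$ and $1-p^t_l \geq \beta$ yields $\|\nabla_{\mathbf{q}^l}\ell(\mathbf{q}^l)\|^2 \leq \beta^{-4}\big((d_{\mathrm{active}}+2)^2 s_{\mathbf{U}^{\mathrm{act}}}^2 + (d_\perp+2)^2 s_{\mathbf{U}^\perp}^2\big)$, plus an $\epsilon$-bias term from the same Taylor remainder.

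Next, I invoke the standard mirror descent regret bound: since $R$ is $1$-strongly convex with respect to $\|\cdot\|_1$, its Fenchel conjugate is $1$-smooth w.r.t. $\|\cdot\|_\infty$, giving $D_{R^*}(\nabla R(\mathbf{q}^l) - \alpha \mathbf{e}_l, \nabla R(\mathbf{q}^l)) \leq \frac{\alpha^2}{2}\|\mathbf{e}_l\|_\infty^2$. Combined with $R(\mathbf{q}) - R(\mathbf{q}^1) \leq 2$ over $\Delta_2$ and the bias bound, taking expectations gives
\begin{equation*}
\sum_{l=1}^C \nabla_{\mathbf{q}^l}\ell(\mathbf{q}^l)^\top(\mathbf{q}^l - \mathbf{q}) \leq \tfrac{2}{\alpha} + \tfrac{\alpha C}{2\beta^4}\big((d_{\mathrm{active}}+2)^2 s_{\mathbf{U}^{\mathrm{act}}}^2 + (d_\perp+2)^2 s_{\mathbf{U}^\perp}^2\big) + \tfrac{C\epsilon(d+2)}{\beta^3}\cdot\mathrm{diam}(\Delta_2).
\end{equation*}
Plugging in $\alpha = 2\beta^2/\sqrt{C\big[(d_{\mathrm{active}}+2)^2 s_{\mathbf{U}^{\mathrm{act}}}^2 + (d_\perp+2)s_{\mathbf{U}^\perp}^2\big]}$ balances the first two terms, and $\epsilon = \beta^3/(2C(d+1))$ kills the bias term. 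Dividing by $C$ and applying convexity $\mathbb{E}[\ell(\mathbf{q}^l)] - \ell(\mathbf{q}) \leq \mathbb{E}[\nabla \ell(\mathbf{q}^l)^\top(\mathbf{q}^l-\mathbf{q})]$ yields the claimed average-regret bound, with the leading constant $\sqrt{(d_{\mathrm{active}}+2)^2 s_{\mathbf{U}^{\mathrm{act}}}^2 + (d_\perp+2)s_{\mathbf{U}^\perp}^2}/\beta^2$ identified with $\mathrm{Var}_{\mathrm{opt}}/\beta^2$ (up to the $\|\nabla F\|^2$ shift that moves into the optimum value).

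The main obstacle, and where care is needed, is reconciling the bound $\sqrt{(d_{\mathrm{active}}+2)^2 s_{\mathbf{U}^{\mathrm{act}}}^2 + (d_\perp+2)s_{\mathbf{U}^\perp}^2}$ arising naturally from the mirror-descent step-size choice with the desired clean form $\mathrm{Var}_{\mathrm{opt}}$. Lemma \ref{lemma::optimal_variance} shows $\mathrm{Var}_{\mathrm{opt}} = \big(\sqrt{s_{\mathbf{U}^{\mathrm{act}}}(d_{\mathrm{active}}+2)} + \sqrt{s_{\mathbf{U}^\perp}(d_\perp+2)}\big)^2 - \|\nabla F(\theta)\|^2$, and one uses Cauchy--Schwarz/AM-QM to absorb the $\sqrt{a^2+b^2} \leq a+b$ slack into the stated bound. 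The secondary subtlety is that the stochastic gradient's magnitude only remains controlled because the reparametrization $\mathbf{p}^t_l = (1-2\beta)\mathbf{q}^t_l + \beta$ bounds the denominators $(p^t_l)^3, (1-p^t_l)^3$ away from zero by $\beta^3$; without this boundary regularization, neither the bias nor the variance of $\mathbf{e}_l$ would remain finite, and the regret argument would break down.
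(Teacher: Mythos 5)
Your proposal is correct and takes essentially the same route as the paper's proof: the multiplicative update is read as entropic mirror descent on $\Delta_2$, the linearized regret is bounded via $R(\mathbf{q})-R(\mathbf{q}^1)\leq 2$ and $D_{R^*}\leq \tfrac{\alpha^2}{2}\|\mathbf{e}_l\|_\infty^2$, the bias and second moment of $\mathbf{e}_l$ are controlled exactly as in the paper's Lemma on the stochastic gradient approximation (using $p^t_l,1-p^t_l\geq\beta$), and the stated $\alpha$ and $\epsilon=\beta^3/(2C(d+1))$ are plugged in to obtain the bound. Your closing paragraph about converting the constant into $\mathrm{Var}_{\mathrm{opt}}$ and passing to function values concerns the subsequent corollary and main-text theorem rather than the statement at hand, but it does not affect the argument for this bound.
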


\begin{proof}
Plugging in this value of $\alpha$:

\begin{align*}
    \left(  \sum_{l=1}^C   \nabla_{\mathbf{q}^l}^\top \ell(\mathbf{q}^l) \left(     \mathbf{q}^l       -  \mathbf{q}  \right) \right) &\leq \frac{\sqrt{C} \sqrt{   (d_{\mathrm{active}}+2)^2 s_{\mathbf{U}^{\mathrm{ort}}}^2+(d_{\mathrm{\perp}}+2)s_{\mathbf{U}^{\perp}}^2 } }{\beta^2} \\
    &+ \left(1+\frac{2\beta^2}{\sqrt{C}\sqrt{   (d_{\mathrm{active}}+2)^2 s_{\mathbf{U}^{\mathrm{ort}}}^2+(d_{\mathrm{\perp}}+2)s_{\mathbf{U}^{\perp}}^2 }} \right)\frac{C\epsilon(d+1) }{\beta^3}
\end{align*}

By setting $\epsilon = \frac{\beta^3}{2C (d+1)}$ the result follows. Assuming $C$ is large enough so that $\alpha < 1$.
\end{proof}

Since $\ell(\mathbf{q})$ is a convex function of $\mathbf{q}$ for all $l$ and $\mathbf{q} \in \Delta_2$:

\begin{equation*}
    \ell(\mathbf{q}^l  ) - \ell(\mathbf{q})  \leq \nabla_{\mathbf{q}^l}^\top \ell(\mathbf{q}^l) \left(     \mathbf{q}^l       -  \mathbf{q}  \right)  
\end{equation*}

Which in turn implies the main result of this section:

 \begin{theorem}\label{theorem::regret_mirror_descent_appendix}
If $\alpha = \frac{2\beta^2}{\sqrt{C}\sqrt{   (d_{\mathrm{active}}+2)^2 s_{\mathbf{U}^{\mathrm{ort}}}^2+(d_{\mathrm{\perp}}+2)s_{\mathbf{U}^{\perp}}^2 }}$ and $\epsilon = \frac{\beta^3}{2C (d+1)}$, for any $\mathbf{q} \in \Delta_2$:

\begin{equation*}
    \mathbb{E}\left[  \sum_{l=1}^C   \ell(\mathbf{q}^l  ) - \ell(\mathbf{q}) \right] \leq \frac{\sqrt{C} \sqrt{   (d_{\mathrm{active}}+2)^2 s_{\mathbf{U}^{\mathrm{ort}}}^2+(d_{\mathrm{\perp}}+2)s_{\mathbf{U}^{\perp}}^2 } }{\beta^2}+ 1
\end{equation*}

This is equivalent to the result stated in the main paper.

\end{theorem}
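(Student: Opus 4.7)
}
The plan is to recognize Algorithm~\ref{Alg: exponentiated_sampling} as an instance of stochastic mirror descent on the $2$-d simplex with the negative-entropy regularizer $R(\mathbf{q}) = \mathbf{q}(1)\log\mathbf{q}(1)+\mathbf{q}(2)\log\mathbf{q}(2)-\mathbf{q}(1)-\mathbf{q}(2)$. The multiplicative update for $q^t_l$ in step~6 is exactly the closed form of the mirror-descent step $\mathbf{q}^{l+1}=\arg\min_{\mathbf{q}\in\Delta_2} D_R(\mathbf{q},\nabla R^*(\nabla R(\mathbf{q}^l)-\alpha\mathbf{e}_l))$. Reparametrize $\mathbf{p}^t_l = (1-2\beta)\mathbf{q}^t_l + \beta\mathbf{1}$ so that $\mathbf{p}^t_l$ is bounded away from the boundary of $\Delta_2$ by $\beta$, and work with the convex loss $\ell(\mathbf{q}):=\ell((1-2\beta)\mathbf{q}+\beta\mathbf{1})$ whose gradient is simply $(1-2\beta)$ times the gradient of $\ell$ in $\mathbf{p}$.

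Next I would verify that the random vector $\mathbf{e}_l$ constructed in step~5 is an approximately unbiased estimator of $\nabla_{\mathbf{q}^t_l}\ell(\mathbf{q}^t_l)$. Conditioned on $\mathbf{p}^t_l$, one has $\mathbb{E}[a^t_l]=\mathbf{p}^t_l(1)$ and $\mathbb{E}[v_l^2\mid a^t_l=1] = (d_{\mathrm{active}}+2)^{-1}(d_{\mathrm{active}}+2)\,s_{\mathbf{U}^{\mathrm{act}}}$ up to the higher-order Taylor remainder controlled by Assumption~2, so the expected values of the two coordinates of $\mathbf{e}_l$ reproduce the gradient of $\Gamma=\ell(\mathbf{p}^t_l)$ computed in equation~\eqref{equation::gamma_definition}. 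The bias is governed by the third-order term $\xi^{(2)}_{\mathbf{g}_l}$ and is bounded by $\epsilon(d+2)/\beta^3$ under the assumed smallness of $\sigma$; this is the step captured by Lemma~\ref{lemma::approximation_stochastic_gradient}. An analogous computation bounds $\mathbb{E}[\|\mathbf{e}_l\|_\infty^2]$ by the deterministic quantity $\beta^{-4}[(d_{\mathrm{active}}+2)^2 s_{\mathbf{U}^{\mathrm{act}}}^2 + (d_\perp+2)^2 s_{\mathbf{U}^\perp}^2]$ plus an $O(\epsilon(d+1)/\beta^3)$ bias.

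Now I invoke the standard regret inequality for mirror descent with an entropic regularizer. Since $R$ is $1$-strongly convex in $\|\cdot\|_1$, its Fenchel conjugate is $1$-smooth in $\|\cdot\|_\infty$, giving
\begin{equation*}
\sum_{l=1}^C \mathbf{e}_l^\top(\mathbf{q}^t_l-\mathbf{q}) \;\leq\; \frac{R(\mathbf{q})-R(\mathbf{q}^t_1)}{\alpha}+\frac{\alpha}{2}\sum_{l=1}^C \|\mathbf{e}_l\|_\infty^2
\end{equation*}
for every $\mathbf{q}\in\Delta_2$. Taking conditional expectations and absorbing the gradient-estimation bias contributes an additive term of order $C\epsilon(d+2)/\beta^3$ on the left-hand side and an analogous term inside the squared-norm bound. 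Bounding $R(\mathbf{q})-R(\mathbf{q}^t_1)\le 2$, substituting the choice $\alpha=2\beta^2/\sqrt{C[(d_{\mathrm{active}}+2)^2 s_{\mathbf{U}^{\mathrm{act}}}^2+(d_\perp+2)s_{\mathbf{U}^\perp}^2]}$ balances the two terms, and the choice $\epsilon=\beta^3/(2C(d+1))$ drives the bias contribution down to a constant.

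Finally, convexity of $\ell$ gives $\ell(\mathbf{p}^t_l)-\ell(\mathbf{p}) \leq \nabla_{\mathbf{q}^t_l}\ell(\mathbf{q}^t_l)^\top(\mathbf{q}^t_l-\mathbf{q})$ for any $\mathbf{p}=(1-2\beta)\mathbf{q}+\beta\mathbf{1}\in \beta+(1-2\beta)\Delta_2$, so passing from the linearized regret to the true regret is free. Dividing by $C$ and recognizing that $\sqrt{(d_{\mathrm{active}}+2)^2 s_{\mathbf{U}^{\mathrm{act}}}^2+(d_\perp+2)s_{\mathbf{U}^\perp}^2} \le [\sqrt{(d_{\mathrm{active}}+2)s_{\mathbf{U}^{\mathrm{act}}}}+\sqrt{(d_\perp+2)s_{\mathbf{U}^\perp}}]^2 \leq \mathrm{Var}_{\mathrm{opt}}+\|\nabla F(\theta)\|^2$ via Theorem~\ref{theorem:combined_theorem_variance1}, the main rate becomes $\mathrm{Var}_{\mathrm{opt}}/(\beta^2\sqrt{C})$, with the residual $1/C$ term soaking up the tuned bias. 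The main obstacle I expect is the careful bookkeeping of the bias between $\mathbf{e}_l$ and the exact gradient $\nabla_{\mathbf{q}}\ell$, since one must verify that both first-moment and second-moment biases remain $O(1/C)$-small after summation; the rest is a textbook mirror-descent calculation.
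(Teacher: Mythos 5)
Your proposal is correct and follows essentially the same route as the paper's proof: interpret Algorithm 2 as entropic mirror descent on the reparametrized simplex, control the bias of $\mathbf{e}_l$ via the $\xi^{(2)}_{\mathbf{g}_l}$ term (Lemma \ref{lemma::approximation_stochastic_gradient}), bound $\mathbb{E}[\|\mathbf{e}_l\|_\infty^2]$ by $\beta^{-4}[(d_{\mathrm{active}}+2)^2 s_{\mathbf{U}^{\mathrm{act}}}^2+(d_\perp+2)^2 s_{\mathbf{U}^\perp}^2]$ plus a small bias, apply the standard strong-convexity/smoothness regret inequality, tune $\alpha$ and $\epsilon$, and finish by convexity of $\ell$. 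The only difference is that you also sketch the passage from the appendix bound to the $\mathrm{Var}_{\mathrm{opt}}$ form of the main text, a step the paper asserts without detail.
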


\section{Additional Implementation Details}

In this section we present additional details on our experimental results, for both the RL tasks and $\mathrm{Nevergrad}$ functions.

\subsection{Reinforcement Learning Experiment Details}

We provide additional details regarding the RL experiments below.

\paragraph{State Normalization.} State-of-the-art policy optimization baselines such as $\mathrm{PPO}$/$\mathrm{TRPO}$ \citep{baselines} and 
the original $\mathrm{ARS}$ \citep{horia} apply state normalization as part of the implementation. In particular, the algorithms 
maintain a component-wise running average of mean $\bar{s}$ and standard deviation vector $\sigma(s)$ of the state. When at given state $s_t$, 
the algorithm computes the normalized state $\tilde{s}_t = \frac{s_t - \bar{s}}{\sigma(s)}$ before inputing to the policy network to 
compute actions $a_t = \pi(\tilde{s}_t)$. For $\mathrm{PPO}$/$\mathrm{TRPO}$, since the optimization is based on back-propagation of neural networks, 
properly scaling the inputs $s_t \rightarrow \tilde{s}_t$ is critical for the performance. In all experiments, we remove state normalization 
mechanism from the implementation to test the robustness of various blackbox optimization algorithms. 
Notice that as reported by \cite{horia}, state normalization was not needed in $\mathrm{ARS}$ to learn good policies for RL tasks under consideration in this paper.
As a result, we observe that $\mathrm{PPO}$/$\mathrm{TRPO}$ 
underperform other ES algorithms for most tasks.

\paragraph{Benchmark Environments.} Benchmark environments are from OpenAI gym \citep{Gym}. These environments have variable sizes of observation space and action space: $\mathrm{Swimmer}$-$\mathrm{v2}$ $|\mathcal{S}|=8$, $|\mathcal{A}|=2$; $\mathrm{Hopper}$-$\mathrm{v2}$ $|\mathcal{S}|=11$, $|\mathcal{A}|=3$; $\mathrm{HalfCheetah}$-$\mathrm{v2}$ $|\mathcal{S}|=17$, $|\mathcal{A}|=6$; $\mathrm{Thrower}$-$\mathrm{v2}$ $|\mathcal{S}|=23$, $|\mathcal{A}|=7$; $\mathrm{Pusher}$-$\mathrm{v2}$ $|\mathcal{S}|=23$, $|\mathcal{A}|=7$; $\mathrm{Walker2d}$-$\mathrm{v2}$ $|\mathcal{S}|=17$, $|\mathcal{A}|=6$;  $\mathrm{Reacher}$-$\mathrm{v2}$ $|\mathcal{S}|=11$, $|\mathcal{A}|=2$.  All environments have a natural termination condition specified in the simulation environment. 

\paragraph{Policy Architecture.} All baseline algorithms involve training a parameterized policy $\pi_\theta(a|s)$ using sample gradient estimates generated from the environment. The policy architecture is shared across all algorithms: a 2-layer feed-forward neural network with $\text{tanh}$ non-linearity and $h$ hidden units per layer. The input to the network is the state $s \in \mathcal{S}$. For all ES-based algorithms (Vanilla ES, $\mathrm{CMA}$-$\mathrm{ES}$, $\mathrm{ARS}$ and $\mathrm{ASEBO}$), the output of the network is the action $a_\theta(s) \in \mathcal{A}$. For policy optimization algorithms ($\mathrm{PPO}$, $\mathrm{TRPO}$), the output of the network is a mean of Gaussian $\mu_\theta(s)$ and we draw actions from a factorized Gaussian distribution $a \sim \mathcal{N}(\mu_\theta(s),\sigma^2 \mathbb{I})$ where we separately parameterize a standard deviation parameter $\sigma$ shared across dimensions. The sizes of hidden layers where: $4$ for $\mathrm{LQR}$, $16$ for $\mathrm{Swimmer}$-$\mathrm{v2}$, $\mathrm{Hopper}$-$\mathrm{v2}$ and $\mathrm{Reacher}$-$\mathrm{v2}$, $32$ for $\mathrm{HalfCheetah}$-$\mathrm{v2}$ and $\mathrm{Walker2d}$-$\mathrm{v2}$, reflecting the difficulty of each task.

\paragraph{Optimization.} Our method ($\mathrm{ASEBO}$) and most baselines (Vanilla ES, $\mathrm{ARS}$, $\mathrm{CMA}$-$\mathrm{ES}$ and $\mathrm{PPO}$) apply SGD based methods and we apply the $\mathrm{Adam}$ optimizer to stabilize the gradients.  

\subsubsection{Baseline Algorithms}

\paragraph{Vanilla ES.} Vanilla ES is the simplest evolutionary algorithm applied in RL tasks \citep{stockholm,ES}. We apply the antithetic sampling scheme as applied in \citep{stockholm}. Our implementation does not rank the rewards as in \citep{ES}, and as previously discussed does not include observation normalization.

\paragraph{CMA-ES variants.} Covariance Matrix Adaptation Evolution Strategy is a state-of-the-art and popular black box optimization algorithm \citep{hansen1996adapting}. $\mathrm{VkD}$-$\mathrm{CMA}$-$\mathrm{ES}$ and $\mathrm{LM}$-$\mathrm{MA}$-$\mathrm{ES}$ are recently proposed variant designed for high dimensional blackbox functions. For $\mathrm{VkD}$-$\mathrm{CMA}$-$\mathrm{ES}$ we use the open source implementation from \emph{pycma} available at \url{http://github.com/CMA-ES/pycma}. We use the default hyper-parameters in the original code base with the standard deviation parameter $\sigma = 1.0$. For $\mathrm{LM}$-$\mathrm{MA}$-$\mathrm{ES}$ we use the implementation from \cite{challenges_esrl}.

\paragraph{ARS.} Augmented Random Search \citep{horia} is based on the code released by the original paper. We use the standard deviation $\sigma = 0.02$ and learning rate $\eta = 0.01$. The hyper-parameters are tuned on top of the default hyper-parameters in the original code base. We remove the observation normalization utility in the original code for fair comparison.

\paragraph{ASEBO.} We propose Adaptive Sample Efficient Blackbox Optimization in this work. Our algorithms have the following hyper-parameters: the covariance decay parameter $\lambda =0.995$ (slow decay), proportion of variance of the active (PCA) space $\epsilon = 0.995$, standard deviation parameter $\sigma = 0.02$. We set the learning rate $\eta = 0.02$. 

\paragraph{Trust Region Policy Optimization.} Trust Region Policy Optimization ($\mathrm{TRPO}$) is based on the implementation of OpenAI baseline \citep{baselines}. We use the default training hyper-parameters in the code base: we collect $N=1024$ samples per batch to compute a policy gradient, with the trust region size parameter $\epsilon = 0.01$. We remove the observation normalization utility in the original code for fair comparison.

\paragraph{Proximal Policy Optimization.} Proximal Policy Optimization ($\mathrm{PPO}$) \citep{schulman2017proximal} is also based on the implementation of OpenAI baseline \citep{baselines}. We use the default hyper-parameters in the code base: we collect $N=2048$ samples per batch to compute policy gradients and set the clipping coefficient $\epsilon=0.2$. The learning rate is set to be $\alpha=3\cdot 10^{-5}$ for all environments. We remove the observation normalization utility in the original code for fair comparison.

\subsection{$\mathrm{Nevergrad}$ Experiment Details}

\paragraph{Function Settings.} We tested the following functions: $\mathrm{cigar}$, $\mathrm{ellipsoid}$, $\mathrm{sphere}$, $\mathrm{sphere2}$, $\mathrm{rosenbrock}$, $\mathrm{rastrigin}$ and $\mathrm{lunacek}$. In each case we used $d=1000$ to evaluate $\mathrm{ASEBO}$ in a high dimensional setting.

\paragraph{Algorithm Hyper-Parameters.} We use the same hyper-parameters across all functions. For $\mathrm{ASEBO}$ and $\mathrm{Vanilla} \mathrm{ES}$, we use $\eta = 0.02$. For $\mathrm{ASEBO}$ we set $\lambda = 0.99$. For $\mathrm{VkD}$-$\mathrm{CMA}$-$\mathrm{ES}$ we use the default parameters from the $\mathrm{pycma}$ package, and for $\mathrm{LM}$-$\mathrm{MA}$-$\mathrm{ES}$ we use the implementation from \cite{challenges_esrl}.

\section{The Algorithm - Additional Details \& Analysis}

We provide here few variations of the $\mathrm{ASEBO}$ algorithm from the main body of the paper, namely:

\begin{algorithm}[H]
\caption{$\mathrm{ASEBO}$ Algorithm - extended version}
\textbf{Hyperparameters:}  number of iterations of full sampling $l$, smoothing parameter $\sigma>0$,
step size $\eta$, PCA threshold $\epsilon$, decay rate $\gamma$, 
total number of iterations $T$.\; \\
\textbf{Input:}  blackbox function $F$, vector $\theta_0 \in \mathbb{R}^{d}$ where optimization starts. 
                 $\mathrm{Cov}_0 \in \{0\}^{d \times d}$, $p^{0}=0$.\; \\
\textbf{Output:} vector $\theta_{T}$. \; \\
\For{$t=0, \ldots, T-1$}{
  \If {$t < l$}{ 
    Take $n_t = d$. Sample $\mathbf{g}_1, \cdots, \mathbf{g}_{n_{t}}$ from $\mathcal{N}(0, \mathbf{I}_{d})$ (independently). \; 
  }
  \Else {
    1. Take top $r$ eigenvalues $\lambda_{i}$ of $\mathrm{Cov}_{t}$, where $r$ is smallest such that: $\sum_{i=1}^{r}\lambda_{i} \geq \epsilon \sum_{i=1}^{d} \lambda_{i}$, 
       using its $\mathrm{SVD}$ as described in text and take $n_{t}=r$.\; \\
    2. Take the corresponding eigenvectors $\mathbf{u}_{1},...,\mathbf{u}_{r} \in \mathbb{R}^{d}$ and let $\mathbf{U} \in \mathbb{R}^{d \times r}$
       be obtained by stacking them together. Let $\mathbf{U}^{\mathrm{act}} \in \mathbb{R}^{d \times r}$ be obtained from stacking together 
       some orthonormal basis of $\mathcal{L}^{\mathrm{ES}}_{\mathrm{active}}   
       \overset{\mathrm{def}}{=} \mathrm{span}\{\mathbf{u}_{1},...,\mathbf{u}_{r}\}$.
       Let $\mathbf{U}^{\perp} \in \mathbb{R}^{d \times (d-r)}$ be obtained from
       stacking together some orthonormal basis of the orthogonal complement $\mathcal{L}^{\mathrm{ES}, \perp}_{\mathrm{active}}$ of $\mathcal{L}^{\mathrm{ES}}_{\mathrm{active}}$.       
       \\
    3. Sample $\mathbf{g}_{1},...,\mathbf{g}_{n_{t}}$ from  $\mathcal{N}(0,\sigma \Sigma)$ (independently), 
       where $\Sigma = \frac{1-p^{t}}{d}\mathbf{I}_{d} + \frac{p^{t}}{r}\mathbf{U}\mathbf{U}^{\top}$ ($\textbf{V0}$) or 
       sample $n_{t}$ vectors $\mathbf{g}_{1},...,\mathbf{g}_{n_{t}}$ as follows: with probability $1-p^{t}$ from $\mathcal{N}(0,\mathbf{U}^{\perp}(\mathbf{U}^{\perp})^{\top})$
       and with probability $p^{t}$ from  $\mathcal{N}(0,\mathbf{U}^{\mathrm{act}}(\mathbf{U}^{\mathrm{act}})^{\top})$ ($\textbf{V1}$). \\
    4. Renormalize $\mathbf{g}_{1},...,\mathbf{g}_{n_{t}}$ such that marginal distributions $\|\mathbf{g}_{i}\|_{2}$ are  $\chi(d)$.
  }
  1. Compute $\widehat{\nabla}_{\mathrm{MC}}^{\mathrm{AT}}F(\theta_{t})$ as:
  \begin{align*}
    \widehat{\nabla}_{\mathrm{MC}}^{\mathrm{AT}}F(\theta_{t}) =  \frac{1}{2n_{t}\sigma} \sum_{j=1}^{n_{t}} (F(\theta_t +  \mathbf{g}_j) - F(\theta_t - \mathbf{g}_j)) \mathbf{g}_j.
  \end{align*}  
  2. Set $\mathrm{Cov}_{t+1} = \lambda \mathrm{Cov}_{t} + (1-\lambda) \Gamma$, where 
     $\Gamma = \widehat\nabla^{\mathrm{AT}}_{\mathrm{MC}}F_{\sigma}(\theta_{t}) (\widehat\nabla^{\mathrm{AT}}_{\mathrm{MC}}F_{\sigma}(\theta_{t}))^{\top}$.
     \\
  3. Set $p^{t+1} = p_{\mathrm{opt}}$ for $p_{\mathrm{opt}}$ output by
      Algorithm 2 (from the main body) or $p^{t+1} = \frac{\widehat{r}}{\widehat{r}+1}$, where:
   \begin{align*} 
   \widehat{r} = \frac{\|(\widehat\nabla F_{\sigma}(\theta_{t}))_{\mathrm{active}}\|_{2}}{\|(\widehat\nabla F_{\sigma}(\theta_{t}))_{\perp}\|_{2}},
   \end{align*} 
   is computed by Algorithm 4 (see: below)
   and scalars $\|(\widehat\nabla F_{\sigma}(\theta_{t}))_{\mathrm{active}}\|_{2}$, $\|(\widehat\nabla F_{\sigma}(\theta_{t}))_{\perp}\|_{2}$ stand for the estimates of
   $\|(\nabla F_{\sigma}(\theta_{t}))_{\mathrm{active}}\|_{2}$ and $\|(\nabla F_{\sigma}(\theta_{t}))_{\perp}\|_{2}$. \\
  4. Set $\theta_{t+1} \leftarrow \theta_{t} + \eta \widehat{\nabla}_{\mathrm{MC}}^{\mathrm{AT}}F(\theta_{t})$. 
 }
\label{Alg:asebo}
\end{algorithm}

\begin{itemize}
    \item we propose one more method for sampling from heterogeneous distributions (see: version \textbf{V0} in Algorithm 3; the default one that we present in the main body is called \textbf{V1} here),
    \item we propose to use compressed sensing techniques (Algorithm 4) as an alternative to the contextual bandits method from the main body (Algorithm 2); the bandits method can be seen as an extension of the compressed sensing techniques.
\end{itemize}

\begin{algorithm}[H]
\textbf{Hyperparameters:} smoothing parameter $\sigma$, horizon $C$. \\
\textbf{Input:} subspaces: $\mathcal{L}^{\mathrm{ES}}_{\mathrm{active}}$,
$\mathcal{L}_{\mathrm{active}}^{\mathrm{ES},\perp}$, function $F$, vector $\theta_{t}$. \\
\textbf{Output:} ratio $\widehat{r}$. \\
1. Initialize square norm averages $s^{\mathrm{active}}_0=s^{\perp}_0 = 0$. \newline
\For{$l = 1, \cdots , C$ }{
    1. Sample $\mathbf{g}^{\mathrm{active}}_l \sim \mathcal{N}(0, 
       \sigma \mathbf{I}_{\mathcal{L}^{\mathrm{ES}}_{\mathrm{active}}})$. \\
    2. Sample $\mathbf{g}^{\perp}_l \sim \mathcal{N}(0, 
       \sigma \mathbf{I}_{\mathcal{L}^{\mathrm{ES}, \perp}_{\mathrm{active}}})$. \\  
    3. Ask for $F(\theta_{t} \pm \mathbf{g}^{\mathrm{type}}_l)$ for $\mathrm{type} \in
      \{\mathrm{active}, \perp\}$. \\    
    4. Compute $v_{l}^{\mathrm{type}} = \frac{1}{2\sigma}(F(\theta_{t} +
       \mathbf{g}^{\mathrm{type}}_l)-F(\theta_{t}-\mathbf{g}^{\mathrm{type}}_l))$. \\     
    5. Compute $s^{\mathrm{active}}_l = \frac{l-1}{l}* s^{\mathrm{active}}_{l-1} +
       \frac{(v^{\mathrm{active}}_l)^2}{l}$. \\
    6. Compute $s^{\perp}_l = \frac{l-1}{l}* s^{\perp}_{l-1} +
       \frac{(v^{\mathrm{\perp}}_l)^2}{l}$.       
}
\textbf{Return:} $\widehat{r} = \sqrt{\frac{s^{\mathrm{active}}_C}{s^{\perp}_C}}$.
\caption{Explore estimator via compressed sensing}
\label{Alg:uniform_sampling}
\end{algorithm}

\subsection{Estimating the sensing ratio $r$.}
In this section we provide guarantees for the estimation of the ratio $r$ as specified in Section \ref{sec:exploration} for
Algorithm \ref{Alg:uniform_sampling}. 
Recall the definitions $s_{\mathbf{U}^{\mathrm{act}}} = \| \mathbf{U}^\top \nabla F(\theta_t)  \|^2$ and $s_{\mathbf{U}^{\perp}}  = \|  (\mathbf{U}^\perp)^\top \nabla F(\theta_t)  \|^2 $.

Since $\left| \frac{ F(\theta_t + \sigma \mathbf{g}) - F(\theta_t - \sigma \mathbf{g})}{2\sigma} - \mathbf{g}^\top \nabla F(\theta_t) \right| \leq \xi_\mathbf{g}(\theta_t)$, when $\mathbf{g} \sim \widehat{P}$, we recognize two cases. If $\mathbf{g} \sim \mathcal{N}(0, \mathbf{I}_{\mathbf{U}})$  the distribution of $\frac{F(\theta_t + \sigma \mathbf{g}) - F(\theta_t - \sigma \mathbf{g})}{2\sigma} \approx N(0, \| \mathbf{U}^\top \nabla F(\theta_t) \|^2 )$. Analogously when  $\mathbf{g} \sim \mathcal{N}(0, \mathbf{I}_{\mathbf{U}^\perp})$ the distribution of $\frac{F(\theta_t + \sigma \mathbf{g}) - F(\theta_t - \sigma \mathbf{g})}{2\sigma} \approx N(0, \| (\mathbf{U}^\perp)^\top \nabla F(\theta_t) \|^2 )$.

\begin{theorem}
Let $0<s<C$ and
let $\mathbf{g}_i \sim \mathcal{N}(0, \mathbf{I}_{\mathcal{L}^{\mathrm{ES}}_{(\mathrm{active})}})$
for $i=1,...,s$ and $\mathbf{g}_{i}, \sim \mathcal{N}(0, \mathbf{I}_{\mathcal{L}_{\mathrm{active}}^{\mathrm{ES}, \perp}})$ for $i=s+1,...,C$. 
Let $\hat{s}_{\mathbf{U}^{\mathrm{ort}}} := \frac{1}{s}\sum_{j=1}^s \left(  \frac{F(\theta + \sigma \mathbf{g}_j) - F(\theta - \sigma \mathbf{g}_j) }{2\sigma} \right)^2$, $\hat{s}_{\mathbf{U}^{\perp}}  
:=\frac{1}{C-s} \sum_{j=1}^{C-s} \left( \frac{F(\theta + \sigma \mathbf{g}_j) - F(\theta - \sigma \mathbf{g}_j) }{2\sigma}\right)^2$ and let $\hat{r} = 
\sqrt{\frac{\hat{s}_{\mathbf{U}^{\mathrm{ort}}} }{\hat{s}_{\mathbf{U}^{\perp}}}} $. Given $u, \epsilon > 0$ and $\delta  \in (\epsilon,1)$, the following holds.
\begin{enumerate}
\item If $C = 2s$ for $s \geq \frac{16}{u^2}\log\left(\frac{8}{\delta}\right)$ and under the mechanism from Algorithm \ref{Alg:uniform_sampling} or
\item If $\{ \mathbf{g}_i \}_{i=1}^C$ are samples generated under $\widehat{P}$,  $\min(p^{t}, 1-p^{t}) > u$ and $C \geq \max\left(  \frac{8}{(p^{t}-u) u^2 }  ,   
\frac{8}{(1-p^{t}-u)u^2}, \frac{2p^{t} + 2u/3}{u^2}  \right) \log\left(  \frac{12}{\delta}     \right)$, 
\end{enumerate}
then with probability at least $1-\delta$:
\begin{equation*}
    \sqrt{\frac{s_{\mathbf{U}^{\mathrm{ort}}}(1-u) - \frac{2\epsilon}{\delta} }{ s_{\mathbf{U}^{\perp}}(1+u) + \frac{2\epsilon}{\delta}  } }  \leq \widehat{r} \leq \sqrt{\frac{s_{\mathbf{U}^{\mathrm{act}}}(1+u) + \frac{2\epsilon}{\delta} }{ s_{\mathbf{U}^{\perp}}(1-u) - \frac{2\epsilon}{\delta}}}.
\end{equation*}
\end{theorem}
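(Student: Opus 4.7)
The plan is to reduce the two-sided bound on $\widehat{r}=\sqrt{\widehat{s}_{\mathbf{U}^{\mathrm{ort}}}/\widehat{s}_{\mathbf{U}^{\perp}}}$ to two matched concentration statements, one per empirical average, and then take the ratio and square root. Concretely, I aim to show on a single event of probability at least $1-\delta$,
\[
s_{\mathbf{U}^{\mathrm{act}}}(1-u) - \tfrac{2\epsilon}{\delta} \;\leq\; \widehat{s}_{\mathbf{U}^{\mathrm{ort}}} \;\leq\; s_{\mathbf{U}^{\mathrm{act}}}(1+u) + \tfrac{2\epsilon}{\delta},
\]
and the analogous inequalities for $\widehat{s}_{\mathbf{U}^{\perp}}$. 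The starting point is the per-sample decomposition already exploited in Lemma~\ref{lemma::approximation_stochastic_gradient}:
\[
v_l^2 \;=\; \bigl(\mathbf{g}_l^\top \nabla F(\theta)\bigr)^2 + \xi^{(2)}_{\mathbf{g}_l}(\theta),\qquad \mathbb{E}\bigl[\,|\xi^{(2)}_{\mathbf{g}_l}(\theta)|\,\bigr] \leq \epsilon,
\]
where the bias bound comes from Assumption 2 and the hypothesis $\sigma < \tfrac{1}{35}\sqrt{\epsilon\min(p^t,1-p^t)/(\tau d^3\max(L,1))}$.

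\textbf{Clean-term concentration.} For $\mathbf{g}_l \sim \mathcal{N}(0,\mathbf{I}_{\mathcal{L}^{\mathrm{ES}}_{\mathrm{active}}})$, writing $\mathbf{g}_l = \mathbf{U}^{\mathrm{act}}\mathbf{h}_l$ with $\mathbf{h}_l\sim\mathcal{N}(0,\mathbf{I}_r)$ gives $\mathbf{g}_l^\top \nabla F(\theta) \sim \mathcal{N}(0, s_{\mathbf{U}^{\mathrm{act}}})$, so $(\mathbf{g}_l^\top \nabla F(\theta))^2/s_{\mathbf{U}^{\mathrm{act}}}$ is $\chi^2_1$, hence sub-exponential with absolute constants. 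A standard Bernstein bound for i.i.d.\ sub-exponential averages yields, for $s\geq 16 u^{-2}\log(8/\delta)$,
\[
\Pr\!\Bigl[\,\bigl|\tfrac{1}{s}\textstyle\sum_{l=1}^s (\mathbf{g}_l^\top \nabla F(\theta))^2 - s_{\mathbf{U}^{\mathrm{act}}}\bigr| \leq u\,s_{\mathbf{U}^{\mathrm{act}}}\,\Bigr] \;\geq\; 1 - \delta/4,
\]
and the same statement with $s_{\mathbf{U}^{\perp}}$ for the perp samples. For the bias pieces, Markov applied to $\tfrac{1}{s}\sum_l |\xi^{(2)}_{\mathbf{g}_l}(\theta)|$ gives that this average stays below $2\epsilon/\delta$ with probability at least $1-\delta/4$. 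A union bound across the four events (two clean, two bias) delivers the stated windows for $\widehat{s}_{\mathbf{U}^{\mathrm{ort}}}$ and $\widehat{s}_{\mathbf{U}^{\perp}}$; dividing and taking square roots then yields the conclusion of the theorem.

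\textbf{Handling Case 2.} Case 1 is immediate since Algorithm~\ref{Alg:uniform_sampling} draws exactly $s$ i.i.d.\ samples from each distribution. For Case 2 the sample counts $N_{\mathrm{act}}\sim\mathrm{Bin}(C,p^t)$ and $N_{\perp}=C-N_{\mathrm{act}}$ are random. A binomial Chernoff/Bernstein bound together with the assumed $C\geq (2p^t+2u/3)u^{-2}\log(12/\delta)$ places $N_{\mathrm{act}}\in[(p^t-u)C,(p^t+u)C]$ on an event $\mathcal{E}_0$ of probability $\geq 1-\delta/3$. The other two lower bounds on $C$ in the hypothesis are tuned precisely so that, on $\mathcal{E}_0$, both $N_{\mathrm{act}}$ and $N_\perp$ exceed $16 u^{-2}\log(12/\delta)$; the clean-term Bernstein step then applies conditionally on the realized counts. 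Replacing $\delta/4$ by $\delta/6$ in the four sub-events and union-bounding with $\mathcal{E}_0$ gives total failure probability at most $\delta$.

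\textbf{Main obstacle.} The principal technical point is calibrating the Bernstein constants tightly enough that exactly $s\geq 16 u^{-2}\log(8/\delta)$ suffices for a multiplicative $(1\pm u)$ approximation of the $\chi^2_1$-mean, so that the constant $16$ in the statement is matched; after this calibration the rest is bookkeeping. The only subtlety worth flagging is that the clean term $(\mathbf{g}_l^\top \nabla F(\theta))^2$ and the bias $\xi^{(2)}_{\mathbf{g}_l}(\theta)$ are dependent within a single index $l$, but since each average is bounded separately in absolute value and then combined additively in $\widehat{s}$, the dependence is immaterial.
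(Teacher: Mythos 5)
Your proposal follows essentially the same route as the paper: decompose each $v_l^2$ into the squared Gaussian term $(\mathbf{g}_l^\top\nabla F(\theta))^2$ plus the third-order bias $\xi^{(2)}_{\mathbf{g}_l}(\theta)$, apply sub-exponential/$\chi^2$ concentration to the clean averages, handle the bias via Markov with threshold $2\epsilon/\delta$, and in Case 2 first control the random Bernoulli sample counts before conditioning; the constants and probability splits ($\delta/4$ events in Case 1, $\delta/6$ in Case 2) match the paper's bookkeeping. Your treatment of the bias via Markov on the empirical average is if anything slightly cleaner than the paper's per-sample statement, but the argument is the same, so there is nothing substantive to add.
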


 \begin{proof}
 First observe we introduce some notation.
 \begin{equation*}
     \xi_\mathbf{g}^{(2)}(\theta_t) =  \left(  \frac{F(\theta_t + \sigma \mathbf{g}) - F(\theta_t - \sigma \mathbf{g}) }{2\sigma}   \right)^2 - \left(  \mathbf{g}^\top \nabla F(\theta_t)  \right)^2 
 \end{equation*}
 Observe that:
\begin{align*} 
 \xi^{(2)}_{\mathbf{g}}(\theta_t) &= \left| \left(  \frac{F(\theta_t + \sigma \mathbf{g}) - F(\theta_t - \sigma \mathbf{g}) }{2\sigma}   \right)^2 - \left(  \mathbf{g}^\top \nabla F(\theta_t)  \right)^2 \right| \\
 &\leq \xi_\mathbf{g}(\theta_t)^2 + 2 \left|  \mathbf{g}^\top \nabla F(\theta_t) \xi_\mathbf{g}(\theta_t)  \right|\\
 &\leq \sigma^4 \tau^2 \| \mathbf{g} \|^6 + 2 \sigma^2 \tau L \|\mathbf{g} \|^4
 \end{align*}

 Let $\hat{s}_V = \hat{s}_V^0 + \frac{1}{s} \sum_{j=1}^{s} \xi_{\mathbf{g}_j}(\theta_t)$ and $\hat{s}_{V^\perp} = \hat{s}_{V^\perp}^0 + \frac{1}{C-s} \sum_{j=1}^{C-s} \xi_{\mathbf{g}_j}(\theta_t)$. Where $s_{\mathbf{U}^{\mathrm{act}}}^0 = \frac{1}{s} \sum_{j=1}^s \left( \nabla F(\theta_t) ^\top \mathbf{g}_j   \right)^2$ and $s_{\mathbf{U}^{\perp}}^0 = \frac{1}{C-s} \sum_{j=s}^C \left( \nabla F(\theta_t) ^\top \mathbf{g}_j   \right)^2$.
 
 Notice that $\nabla F(\theta_t)^\top \mathbf{g}$ is distributed as a Gaussian Random variable (with variance depending on the support of the covariance of $\mathbf{g}$). 

 By concentration of squared gaussian random variables:
        \begin{align*}
            \mathbb{P}\left[ | \hat{s}_{V}^0 -  s_{\mathbf{U}^{\mathrm{act}}} | \geq u s_{\mathbf{U}^{\mathrm{act}}}     \right] \leq 2\exp\left(-\frac{s u^2}{8}\right)  \\
            \mathbb{P}\left[     | \hat{s}_{V^\perp}^0 -  s_{\mathbf{U}^{\perp}} | \geq u s_{\mathbf{U}^{\perp}}   \right] \leq 2\exp\left( -\frac{(k-s)u^2}{8} \right)
        \end{align*}
Consequently, with probability $1-2\exp\left(  -\frac{su^2}{8}  \right) - 2\exp\left(  -\frac{(k-s)u^2}{8} \right)$, it holds that:
\begin{equation*}
    \frac{s_{\mathbf{U}^{\mathrm{act}}}}{s_{\mathbf{U}^{\perp}}}\left( \frac{1+u}{1-u}    \right) \geq \frac{\hat{s}_V^0}{\hat{s}_{V^\perp}^0} \geq \frac{s_{\mathbf{U}^{\mathrm{act}}}}{s_{\mathbf{U}^{\perp}}} \left( \frac{1-u }{1+u} \right)
\end{equation*}

Notice that by Markov's inequality:

\begin{equation}
   \mathbb{P}\left( \xi_\mathbf{g}(\theta_t) \geq \frac{2\epsilon}{ \delta}\right) \leq \delta \frac{\mathbb{E}\left[   \sigma^4 \tau^2 \| \mathbf{g} \|^6 + 2 \sigma^2 \tau L \|\mathbf{g} \|^4  \right]}{2\epsilon} \leq \frac{\delta}{2}
\end{equation} 

Since $ \sigma < \frac{1}{35}\sqrt{\frac{\epsilon \min(p^{t}, 1-p^{t})}{\tau  d^3 \max(L, 1)} }$, $\mathbb{E}\left[   \sigma^4 \tau^2 \| \mathbf{g} \|^6 + 2 \sigma^2 \tau L \|\mathbf{g} \|^4  \right] \leq \epsilon$. 

Regardless if $\mathbf{g}$ was sampled from $\mathcal{N}(0, \mathbf{I})$, $\mathcal{N}(0, \mathbf{I}_{\mathbf{U}})$ or $\mathcal{N}(0, \mathbf{I}_{\mathbf{U}^\perp})$.

\textbf{Case 1.}

By definition, $C = 2s$, and therefore $C-s = C/2$ and therefore:
        \begin{align*}
            \mathbb{P}\left[ | \hat{s}_{V}^0 -  s_{\mathbf{U}^{\mathrm{act}}} | \geq u s_{\mathbf{U}^{\mathrm{act}}}     \right] \leq 2\exp\left(-\frac{C u^2}{16}\right)  \\
            \mathbb{P}\left[     | \hat{s}_{V^\perp}^0 -  s_{\mathbf{U}^{\perp}} | \geq u s_{\mathbf{U}^{\perp}}   \right] \leq 2\exp\left( -\frac{Cu^2}{16} \right)
        \end{align*}
We require that:
\begin{align*}
    2\exp\left(-\frac{C u^2}{16}\right) \leq  \delta/4 \\
    2\exp\left( -\frac{Cu^2}{16} \right) \leq \delta/4
\end{align*}

\textbf{Case 2.}

    In fact, by concentration results on Bernoulli variables, given $\alpha >0$, $| s - kp^{t}  |\leq k\alpha$ and $| (k-s) - (1-p^{t})k | \leq k\alpha$ with probability at least $1 - \exp\left( -\frac{k\alpha^2}{2p^{t}} \right) - \exp\left(   -\frac{k\alpha^2 }{ 2p^{t} + 2\alpha/3 }\right)$. 
    
    Let $\alpha = u$. Conditioning on the events that $|s-kp^{t}| \leq u k$ and $|(k-s) - (1-p^{t})k| \leq u k$. We seek to ensure that:
    
    \begin{align*}
        2\exp\left( -\frac{ (p^{t}-u)k u^2}{8}   \right) \leq \delta /6 \\
        2\exp\left(  - \frac{(1-p^{t}-u)k u^2}{8}      \right) \leq  \delta/ 6\\
        \exp\left( -\frac{ku^2}{2p^{t}}    \right) \leq \delta/6   \\
        \exp\left(-\frac{k u^2}{2p^{t} + 2u/3}      \right) \leq \delta /6
    \end{align*}

\textbf{Case 1 and 2}
    The following inequalities hold:

\begin{equation*}
    \frac{s_{\mathbf{U}^{\mathrm{act}}}(1-u) - \frac{2\epsilon}{\delta} }{ s_{\mathbf{U}^{\perp}}(1+u) + \frac{2\epsilon}{\delta}  } \leq \frac{\hat{s}_V^0 - \frac{2\epsilon}{\delta}} {  \hat{s}_{V^\perp}^0 + \frac{2\epsilon}{\delta}  } \leq \frac{\hat{s}_V }{\hat{s}_{V^\perp}  } \leq \frac{\hat{s}_V^0 + \frac{2\epsilon}{\delta}} {  \hat{s}_{V^\perp}^0 - \frac{2\epsilon}{\delta}  } \leq \frac{s_{\mathbf{U}^{\mathrm{act}}}(1+u) + \frac{2\epsilon}{\delta} }{ s_{\mathbf{U}^{\perp}}(1-u) - \frac{2\epsilon}{\delta}  }
\end{equation*}
    
    The union bound yields the desired result. And therefore the result follows.
\end{proof}

\end{document}